\newtheorem{theorem}{Theorem}[section]
\newtheorem{proposition}[theorem]{Proposition}
\newtheorem{assump}{Assumption}
\newtheorem{lemma}[theorem]{Lemma}
\newtheorem{definition}[theorem]{Definition}
\theoremstyle{remark}
\newtheorem{remark}{Remark}
\newtheorem{conjecture*}{Conjecture}
\theoremstyle{plain}
\title{Robust Inference of Manifold Density and Geometry \\ by Doubly Stochastic Scaling}
\author{ Boris Landa${^{1,*}}$~~~~Xiuyuan Cheng${^{2}}$\\
\small{${^1}$Program in Applied Mathematics, Yale University}\\
\small{${^2}$Department of Mathematics, Duke University}\\
\small{${^*}$Corresponding author. Email: boris.landa@yale.edu}
}
\date{}
\begin{document}

\maketitle

\begin{abstract}
The Gaussian kernel and its traditional normalizations (e.g., row-stochastic) are popular approaches for assessing similarities between data points. Yet, they can be inaccurate under high-dimensional noise, especially if the noise magnitude varies considerably across the data, e.g., under heteroskedasticity or outliers. In this work, we investigate a more robust alternative -- the doubly stochastic normalization of the Gaussian kernel. We consider a setting where points are sampled from an unknown density on a low-dimensional manifold embedded in high-dimensional space and corrupted by possibly strong, non-identically distributed, sub-Gaussian noise. We establish that the doubly stochastic affinity matrix and its scaling factors concentrate around certain population forms, and provide corresponding finite-sample probabilistic error bounds. We then utilize these results to develop several tools for robust inference under general high-dimensional noise. First, we derive a robust density estimator that reliably infers the underlying sampling density and can substantially outperform the standard kernel density estimator under heteroskedasticity and outliers. Second, we obtain estimators for the pointwise noise magnitudes, the pointwise signal magnitudes, and the pairwise Euclidean distances between clean data points. Lastly, we derive robust graph Laplacian normalizations that accurately approximate various manifold Laplacians, including the Laplace Beltrami operator, improving over traditional normalizations in noisy settings. We exemplify our results in simulations and on real single-cell RNA-sequencing data. For the latter, we show that in contrast to traditional methods, our approach is robust to variability in technical noise levels across cell types.
\end{abstract}

\section{Introduction}
\subsection{Traditional normalizations of the Gaussian kernel}
Many popular techniques for clustering, manifold learning, visualization, and semi-supervised learning, begin by learning similarities between observations. The learned similarities then form an affinity matrix that describes a weighted graph, which is further processed and analyzed according to the required task. A popular approach to construct an affinity matrix from the data is to evaluate the Gaussian kernel with pairwise Euclidean distances.
Specifically, letting $y_1,\ldots,y_n\in\mathbb{R}^m$ be a collection of given data points, we define the Gaussian kernel $\mathcal{K}_\epsilon: \mathbb{R}^m\times \mathbb{R}^m \rightarrow \mathbb{R}_+$ and the resulting kernel matrix $K\in\mathbb{R}^{n\times n}$ as
\begin{equation}
    {K}_{i,j} = 
    \begin{dcases}
    {\mathcal{K}_{\epsilon}({y}_i,{y}_j)}, & i \neq j, \\
    0, & i=j, 
    \end{dcases},
    \qquad\qquad \mathcal{K}_{\epsilon}(y_i,y_j) = \operatorname{exp}\left\{ -\frac{ \Vert y_i - y_j \Vert_2^2}{\epsilon} \right\}, \label{eq: Gaussian kernel K def}
\end{equation}
for $i,j=1,\ldots,n$, where $\epsilon>0$ is a tunable bandwidth parameter. Here we adopt the version of the kernel matrix $K$ whose main diagonal is zeroed out, namely with no self-loops in the graph described by $K$. This choice will be further motivated from the viewpoint of noise robustness in Section~\ref{sec: introduction - noise robustness}. 

The kernel matrix $K$ is often normalized to attain certain favorable properties. For instance, a popular choice is to divide each row of $K$ by its sum to make it a transition probability matrix. Generally, a family of normalizations that underlies many methods can be expressed by ${P}^{(\alpha)}\in\mathbb{R}^{n\times n}$ or $\hat{P}^{(\alpha)}\in\mathbb{R}^{n\times n}$ given by
\begin{equation}
    \hat{P}^{(\alpha)}_{i,j} = \frac{{P}^{(\alpha)}_{i,j}}{\sum_{j=1}^n {P}^{(\alpha)}_{i,j}}, \qquad\qquad {P}^{(\alpha)} = D^{-\alpha} K D^{-\alpha}, \qquad\qquad D_{i,i} = \sum_{j=1}^n K_{i,j}, \label{eq: traditional graph Laplacian normalizations}
\end{equation}
where $\alpha \in [0,1]$ is a parameter of the normalization, and $D\in\mathbb{R}^{n\times n}$ is a diagonal matrix whose main diagonal holds the degrees of the nodes in the weighted graph represented by $K$. If $\alpha=0$, then $\hat{P}^{(\alpha)}$ describes the popular row-stochastic normalization $D^{-1} K$, and if $\alpha = 0.5$, then $P^{(\alpha)}$ describes its symmetric variant $D^{-1/2} K D^{-1/2}$. These normalizations have been utilized and extensively studied in the context of clustering~\cite{shi2000normalized,ng2002spectral,sarkar2015role,von2007tutorial,fortunato2010community,trillos2021geometric}, non-linear dimensionality reduction (or manifold learning)~\cite{belkin2003laplacian,coifman2006diffusionMaps,nadler2006diffusion,maaten2008visualizing}, image denoising~\cite{buades2005non,pang2017graph,meyer2014perturbation,landa2018steerable,singer2009diffusion}, and graph-based signal processing and supervised-learning~\cite{shuman2013emerging,coifman2006diffusionWavelets,hammond2011wavelets,defferrard2016convolutional,bronstein2017geometric}. 

An important theoretical aspect of various normalizations is the convergence of the corresponding graph Laplacian to a differential operator as $n\rightarrow \infty$ and $\epsilon\rightarrow 0$, typically under the assumption that the points $y_1,\ldots,y_n$ are sampled from a low-dimensional Riemannian manifold $\mathcal{M}$ embedded in $\mathbb{R}^m$; see~\cite{singer2006graph,hein2005graphs,trillos2020error,calder2019improved,dunson2021spectral,cheng2021eigen} and references therein.
The family of normalizations in~\eqref{eq: traditional graph Laplacian normalizations} was proposed in the Diffusion Maps paper~\cite{coifman2006diffusionMaps}, where it was shown that a population analogue (i.e., a continuous surrogate in the limit $n\rightarrow\infty$) of the graph Laplacian $I_n - \hat{P}^{(\alpha)}$ converges to a certain differential operator parametrized by $\alpha$; see Section~\ref{sec: graph Laplacian estimation} for more details. This operator is particularly appealing from the viewpoint of diffusion on the manifold $\mathcal{M}$; in the case of $\alpha=1$, this operator is precisely the Laplace-Beltrami operator~\cite{grigor2006heat}, which determines the solution to the heat equation and encodes the intrinsic geometry of the manifold regardless of the sampling density. Note that the parameter $\alpha$ in~\eqref{eq: traditional graph Laplacian normalizations} determines the entrywise power of the degree matrix $D$, whose diagonal entries are $\sum_{j=1, \; j\neq i}^n \mathcal{K}_\epsilon (y_i,y_j)$, which can be interpreted as a kernel density estimator evaluated at the sample points $y_1,\ldots,y_n$~\cite{parzen1962estimation,rosenblatt1956remarks}. Indeed, $\alpha$ controls the influence of the sampling density on the resulting affinity matrix and its spectral behavior~\cite{coifman2006diffusionMaps}. 

\subsection{The doubly stochastic normalization}
An alternative normalization of $K$ that is not covered by $\hat{P}^{(\alpha)}$ or $P^{(\alpha)}$ of~\eqref{eq: traditional graph Laplacian normalizations} is the doubly stochastic normalization~\cite{zass2005unifying,zass2007doubly,marshall2019manifold,wormell2021spectral}
\begin{equation}
    W_{i,j} = d_i {K}_{i,j} d_j, \qquad\qquad \sum_{j=1}^{n} W_{i,j} = 1, \label{eq:discrete scaling eq}
\end{equation}
for $i=1,\ldots,n$, where $d_1,\ldots,d_n > 0$ are the \textit{scaling factors} of ${K}$. Since the resulting $W$ is symmetric and stochastic (i.e., each row sums to $1$), it is also doubly stochastic. The problem of finding $\mathbf{d} = [d_1,\ldots,d_n] > 0$ is known as \textit{matrix scaling}, and has been extensively studied in the literature; see~\cite{bapat1997nonnegative,idel2016review} and references therein.
In the case of~\eqref{eq:discrete scaling eq}, the scaling factors are guaranteed to exist and are unique for $n>2$ since ${K}$ is fully indecomposable (see Lemma 1 in~\cite{landa2021doubly}). Although the scaling factors $\mathbf{d}$ do not admit a closed-form solution, they can be found numerically, e.g., by the classical Sinkhorn-Knopp algorithm~\cite{sinkhorn1967concerning}, convex optimization~\cite{allen2017much}, or algorithms specialized for symmetric matrices~\cite{knight2014symmetry,wormell2021spectral}. The doubly stochastic normalization is also closely related to entropic optimal transport~\cite{cuturi2013sinkhorn,peyre2019computational}. In particular, $W_{i,j}$ is the optimal transport plan between $y_i$ and $y_j$ according to the squared Euclidean distance loss with an entropic regularization term weighted by $\epsilon$ (see Proposition 2 in~\cite{landa2021doubly}).  

Doubly stochastic affinity matrices proved useful for various tasks such as clustering~\cite{zass2007doubly,beauchemin2015affinity,wang2012improving,lim2020doubly,ah2022learning,douik2018riemannian,chen2022robust}, manifold learning~\cite{marshall2019manifold,wormell2021spectral,cheng2022bi}, image denoising~\cite{milanfar2013symmetrizing}, and graph matching~\cite{coifman2022common}, often exhibiting more stable behavior and outperforming traditional normalizations. We note that requiring an affinity matrix to be doubly stochastic is appealing from a geometric perspective since the heat kernel on a compact Riemannian manifold -- an operator describing affinities between points according to the intrinsic geometry -- is always doubly stochastic~\cite{grigor2006heat}.

In the context of manifold learning, the doubly stochastic normalization was recently investigated in~\cite{marshall2019manifold,wormell2021spectral,cheng2022bi}. Specifically,~\cite{marshall2019manifold} analyzed a population setting where the scaling equation~\eqref{eq:discrete scaling eq} is replaced with a more general family of integral equations parametrized by $\alpha$ (see eq.~\eqref{eq:integral scaling eq with density} in Section~\ref{sec: main results} for the special case of $\alpha=0.5$). It was shown that the limiting differential operator can be made the same as for the traditional normalization $\hat{P}^{(\alpha)}$ from~\eqref{eq: traditional graph Laplacian normalizations} as $\epsilon \rightarrow 0$ for any $\alpha \in [0,1]$. In a different direction, \cite{wormell2021spectral} focused on the case where the manifold is a torus, and derived spectral convergence rates for $W$ as $n\rightarrow\infty$ and $\epsilon\rightarrow 0$, showing that the doubly stochastic normalization admits an improved bias error term compared to the traditional normalization in~\eqref{eq: traditional graph Laplacian normalizations} when $\alpha=0.5$. Lastly,~\cite{cheng2022bi} investigated a regularized version of the scaling equation~\eqref{eq:discrete scaling eq} where the scaling factors are lower-bounded, and established operator convergence rates of the corresponding graph Laplacian to the same operator as for $\hat{P}^{(0.5)}$ as $n\rightarrow\infty$ and $\epsilon\rightarrow 0$ on general manifolds.
We note that theoretical investigation of the doubly stochastic normalization is typically more challenging than that of more traditional normalizations, particularly due to the implicit and nonlinear nature of the scaling equation~\eqref{eq:discrete scaling eq} and its population analogue (which is a nonlinear integral equation). 

\subsection{The influence of noise} \label{sec: introduction - noise robustness}
Modern experimental procedures such as single-cell RNA-sequencing (scRNA-seq)~\cite{brennecke2013accounting,jia2017accounting,kim2015characterizing}, cryo-electron microscopy~\cite{baxter2009determination,scheres2012bayesian,henderson2013avoiding}, and calcium imaging~\cite{malik2011denoising,rupasinghe2021direct,lohani2020dual}, to name a few, produce large datasets of high-dimensional observations often corrupted by strong noise. In such cases, the classical theoretical setup where data points reside on, or near, a low-dimensional manifold can be highly inaccurate. To model noisy data, we consider
\begin{equation}
    {y}_i = {x}_i + \eta_i, \label{eq: noisy data} 
\end{equation}
where $x_1,\ldots,x_n$ are the underlying clean observations and $\eta_1,\ldots,\eta_n\in\mathbb{R}^m$ are independent noise random vectors with zero means (to be specified in detail later on). 

For identically distributed noise vectors $\{\eta_i\}_{i=1}^n$, the influence of noise on pairwise Euclidean distances and the entries of a kernel matrix was derived in~\cite{el2010information}. Specifically, if the noise magnitudes $\Vert \eta_i \Vert_2^2$ concentrate well in high dimension around a global constant $c$, it was shown that $\Vert {y}_i - {y}_j \Vert_2^2 \approx \Vert {x}_i - {x}_j \Vert_2^2 + 2c$ for all $i\neq j$. Consequently, the noisy Gaussian kernel $\mathcal{K}_\epsilon({y}_i,{y}_j)$ is biased for $i\neq j$ by a global multiplicative factor. Since this factor does not exist on the main diagonal of the Gaussian kernel matrix, it is advantageous to zero it out (as done in $K$ from~\eqref{eq: Gaussian kernel K def}). This way, the multiplicative factor cancels out automatically through the traditional normalization $\hat{P}^{(\alpha)}$ (for any $\alpha$) or $P^{(\alpha)}$ with $\alpha=0.5$; see~\cite{el2016graph} for more details. 

In many applications, the noise characteristics vary considerably across the data due to heteroskedasticity and outliers. In particular, heteroskedastic noise is prevalent in applications involving count or nonnegative data, typically modeled by, e.g., Poisson, binomial, negative binomial, or gamma distributions, whose variances inherently depend on their means, which can vary substantially across the data. Notable examples for such data are network traffic analysis~\cite{shen2005analysis}, photon imaging~\cite{salmon2014poisson}, document topic modeling~\cite{wallach2006topic}, scRNA-seq~\cite{hafemeister2019normalization}, and high-throughput chromosome conformation capture~\cite{johanson2018genome}, among many others. Heteroskedastic noise also arises in natural image processing due to spatial pixel clipping~\cite{foi2009clipped} and in experimental procedures where conditions vary during data acquisition, such as in spectrophotometry and atmospheric data collection~\cite{cochran1977statistically,tamuz2005correcting}. Besides natural heteroskedasticity, experimental data often include outlier observations with abnormal noise distributions due to, e.g., abrupt deformations or technical errors during acquisition and storage. Consequently, to better understand the possible advantages of doubly stochastic normalization, it is important to investigate it under general non-identically distributed noise that supports heteroskedasticity and outliers.

If the noise vectors $\{\eta_i\}_{i=1}^n$ are not identically distributed or if $\Vert \eta_i \Vert_2^2$ do not concentrate well around a global constant, then the noisy Euclidean distances $\Vert {y}_i - {y}_j \Vert_2^2$ can be corrupted in a nontrivial way. In such cases, as demonstrated in~\cite{landa2021doubly}, the Gaussian kernel and several of its traditional normalizations can behave unexpectedly and incorrectly assess the similarities between data points.
On the other hand,~\cite{landa2021doubly} also shows that the doubly stochastic normalization is robust to non-identically distributed high-dimensional noise.
Specifically, under suitable conditions on the noise, and if $\epsilon$ and $n$ are fixed while $m$ is growing, ${W}$ converges pointwise in probability to its clean counterpart, even if the noise magnitudes $\Vert \eta_i \Vert_2^2$ are comparable to the signal magnitudes $\Vert x_i \Vert_2^2$ and fluctuate considerably. While this result highlights an important advantage of the doubly stochastic normalization, it does not account for the sample size $n$ nor the bandwidth $\epsilon$. Hence, it remains unclear what is the population interpretation of the doubly stochastic normalization under noise in terms of the sampling density and the underlying geometry.

\subsection{Our results and contributions}
In this work, we consider a setting where $x_1,\ldots,x_n$ are sampled from a low-dimensional manifold embedded in $\mathbb{R}^m$, and $\eta_1,\ldots,\eta_n$ are sampled from non-identically distributed sub-Gaussian noise that allows for heteroskedasticity and outliers. Our analysis is carried out in a high-dimensional regime in which the noisy Euclidean distances satisfy $\Vert y_i - y_j\Vert_2^2 = \Vert x_i - x_j\Vert_2^2 + \Vert \eta_i \Vert_2^2 + \Vert \eta_j \Vert_2^2 + {o}(1)$, where $\Vert \eta_i \Vert_2^2$ are unknown and can be large, and the ${o}(1)$ term is vanishing as $m \rightarrow \infty$ but is explicitly accounted for. 
Our main contributions are two-fold. First, we characterize the pointwise behavior of the scaling factors $\mathbf{d}$ and the scaled matrix $W$ in terms of the quantities in our setup for large $m,n$ and small $\epsilon$; see Section~\ref{sec: main results}. Second, we build on these results to infer various quantities of interest from the noisy data and provide robust normalizations analogous to~\eqref{eq: traditional graph Laplacian normalizations} with appropriate theoretical justification; see Section~\ref{sec: application to inference}. In addition, in section~\ref{sec: scRNA-seq example} we demonstrate our results on real single-cell RNA-sequencing data, and in Section~\ref{sec: discussion} we discuss future research directions. All proofs are deferred to the appendix. Below is a detailed account of our results and contributions.

In Section~\ref{sec: main results} we begin by considering the setting of fixed $\epsilon$ and large $m,n$. We establish that ${d}_i$ and $W_{i,j}$ concentrate around certain quantities that depend explicitly on the clean Gaussian kernel $\mathcal{K}_\epsilon(x_i,x_j)$, the noise magnitudes $\Vert \eta_i \Vert_2^2$, and the solution to an integral equation that is the population analogue of~\eqref{eq:discrete scaling eq}. The associated error term is described via a probabilistic bound that is explicit in $m$, $n$, and the sub-Gaussian norm of the noise; see Theorem~\ref{thm:scaling factors asym expression}. Importantly, this result allows the noise magnitudes $\Vert \eta_i \Vert_2^2$ to be large and possibly diverge (stochastically) as $m,n \rightarrow \infty$, while the probabilistic errors in $\mathbf{d}$ and $W$ are vanishing. Therefore, the doubly stochastic scaling is robust to the entry-wise perturbations of the noise in large samples and high dimension simultaneously. Next, we turn to analyze the solutions to the aforementioned integral equation (see Equation~\eqref{eq:integral scaling eq with density}) for small $\epsilon$. In particular, we prove a first-order approximation in $\epsilon$ that depends on the sampling density and the manifold geometry; see Theorem~\ref{thm: scaling function convergence}.
Overall, our results in Section~\ref{sec: main results} show that for small $\epsilon$ and sufficiently large $m$ and $n$, the noisy doubly stochastic affinity matrix $W_{i,j}$ approximates the clean Gaussian kernel $\mathcal{K}_\epsilon(x_i,x_j)$ up to a global constant and a multiplicative bias term that depends inversely on the square root of the sampling densities at $x_i$ and $x_j$, but not on the noise magnitudes $\Vert \eta_i \Vert_2^2$. This is made possible by the scaling factors $d_i$, which ``absorb'' the noise magnitudes $\Vert \eta_i \Vert_2^2$, thereby correcting the Euclidean distances in the noisy Gaussian kernel. In particular, the scaling factor $d_i$ depends exponentially on the noise magnitude $\Vert \eta_i \Vert_2^2$, and inversely on the square root of the sampling density at $x_i$; see Equation~\eqref{eq: W_ij and d_i limiting form} in Section~\ref{sec: main results}. To the best of our knowledge, these results are new even when no noise is present, as they describe the sample-to-population pointwise behavior of $W$ and the scaling factors $\mathbf{d}$.

In Section~\ref{sec: application to inference} we proceed by developing several tools for robust inference. First, we construct a robust density estimator by applying a nonlinearity to $W$ and establish its convergence to the true density up to a global constant under appropriate conditions; see Equation~\ref{eq:density estimator formula} and Theorem~\ref{thm: density estimator}. We demonstrate that this approach can provide accurate density estimates on a manifold under strong heteroskedastic noise and outliers, whereas the standard kernel density estimator $D_{i,i}$ from~\eqref{eq: traditional graph Laplacian normalizations} fails; see Figures~\ref{fig: density estimation}--\ref{fig: density estimation error vs epsilon}. Second, we show that the scaling factors $\mathbf{d}$ and our robust density estimator can be combined to recover the noise magnitudes $\Vert \eta_i \Vert_2^2$, the signal magnitudes $\Vert x_i \Vert_2^2$, and the clean Euclidean distances $\Vert {x}_i - {x}_j \Vert_2^2$ up to small perturbations; see Equations~\eqref{eq: noise magnitude estimator},~\eqref{eq: signal magnitude estimate and distance correction formulas} and Proposition~\ref{prop: noise magnitude est}. We demonstrate that these tools can be useful for detecting outliers, assessing the local quality of data, and identifying near neighbors more accurately; see Figures~\ref{fig: recovering noise magnitudes and signal magnitudes} and~\ref{fig: recovering Euclidean distances}.
Third, by utilizing our robust density estimator and the doubly stochastic matrix $W$, we provide a family of normalizations that is a robust analogue of the traditional normalizations in~\eqref{eq: traditional graph Laplacian normalizations}, and establish convergence of the corresponding graph Laplacians to the appropriate family of differential operators; see Equations~\eqref{eq: robust graph Laplacian normalizations},~\eqref{eq: differential operator T} and Theorem~\ref{thm: Laplacian estimator}. These normalizations can be used to obtain a more robust version of the Diffusion Maps method~\cite{coifman2006diffusionMaps}. In particular, we demonstrate that in the case of $\alpha=1$ and high-dimensional heteroskedastic noise, our approach provides a much more accurate approximation to the Laplace Beltrami operator than the traditional normalization of~\eqref{eq: traditional graph Laplacian normalizations}; see Figure~\ref{fig: Laplacian estimation example}. Overall, our results in Section~\ref{sec: application to inference} show that it is possible to recover the sampling density and the manifold geometry under general high-dimensional noise, even when the noise magnitudes are non-negligible and vary substantially. Importantly, our results show that this recovery is possible even when the ambient dimension grows slowly with the sample size, e.g., $m\propto n^{0.001}$.

Lastly, in Section~\ref{sec: scRNA-seq example} we exemplify the tools derived in Section~\ref{sec: application to inference} on experimental single-cell RNA-sequencing (scRNA-seq) data with cell type annotations. First, we show that our general-purpose noise magnitude estimator (derived in Section~\ref{sec: recovering noise magnitudes}) agrees well with a popular model for explaining scRNA-seq data; see Figure~\ref{fig: estimated noise vs. Poisson noise scRNAseq}. Second, we show that our robust analogue of $\hat{P}^{(\alpha)}$ from~\eqref{eq: traditional graph Laplacian normalizations} (derived in Section~\ref{sec: graph Laplacian estimation}) describes a more accurate and stable random walk behavior with respect to the ground truth cell types; see Figures~\ref{fig: average probability to leave cell type scRNAseq} and~\ref{fig: worst-case probability to leave cell type scRNAseq}. The reason for this advantage is that different cell types have different levels of technical noise, which are automatically accounted for by our proposed robust normalization.

 \begin{table}[t]
 \small
   \caption{
\label{tab:notations}
Common symbols and notation} 
 \begin{center}
 \begin{tabular}{  p{1.1cm}  p{8.5cm}   }
 \hline
  ${\mathcal{M}}$ 	& $d$-dimensional manifold embedded in $\mathbb{R}^m$  	  \\
  $d$			& Intrinsic dimension of $\mathcal{M}$ \\
  $m$		& Dimension of the ambient space \\
  $n$ 			& Number of data points 	\\
 $q(x)$			& Sampling density at $x\in{\mathcal{M}}$  \\
 $x_i$      		& Clean data points on $\mathcal{M}$		\\
 $y_i$      		& Noisy data points in $\mathbb{R}^m$		\\
 $\eta_i$      		& Noise vectors in $\mathbb{R}^m$		\\
 $\mathcal{K}_{\epsilon}(\cdot,\cdot)$      		& Gaussian kernel		\\
 $\epsilon$ 	&  Kernel bandwidth parameter 		 \\ 
 $ \rho_{\epsilon}(x)$		& Function solving the integral eq.~\eqref{eq:integral scaling eq with density} at $x\in\mathcal{M}$ \\
 $K$ 	&  Noisy $n\times n$ Gaussian kernel matrix 		 \\ 
 $W$ 	&  Noisy $n\times n$ doubly stochastic affinity matrix 		 \\ 
 $\mathbf{d}$ 	&  Vector of $n$ scaling factors solving eq.~\eqref{eq:discrete scaling eq} 		 \\ 
  $E$ 	&  Maximal sub-Gaussian norm of the noise 		 \\ 
   $\Delta_{\mathcal{M}}$ 	&  The (negative) Laplace-Beltrami operator on $\mathcal{M}$ 		 \\ 
   $d\mu(x)$ 	&  Volume form of $\mathcal{M}$ at $x\in\mathcal{M}$	 \\ 
  \hline
\end{tabular}
\end{center}
\end{table} \label{table: notation}

\section{Large sample behavior of doubly stochastic scaling under high-dimensional noise} \label{sec: main results}
We consider the setting where the clean points $x_1,\ldots,x_n$ are sampled independently from a probability measure $d\nu$ supported on a $d$-dimensional Riemannian manifold $\mathcal{M} \subset \mathbb{R}^m$. In particular, $d\nu = q(x) d\mu(x)$, where $d\mu(x)$ is the volume form of $\mathcal{M}$ at $x \in \mathcal{M}$ and $q(x)$ is a positive and continuous probability density function on $\mathcal{M}$. We further make the following assumption on $\mathcal{M}$.
\begin{assump} \label{assump: manifold}
$\mathcal{M}$ is compact, smooth, with no boundary, and satisfies
$\Vert x \Vert_2\leq 1$ for all $x\in\mathcal{M}$. 
\end{assump}

A random vector $\eta\in\mathbb{R}^{m}$ is called sub-Gaussian if  $\langle \eta , y \rangle $ is a sub-Gaussian random variable~\cite{vershynin2018high} for any $y\in\mathbb{R}^m$, where $\langle \cdot , \cdot \rangle $ is the standard scalar product. For each $x\in\mathcal{M}$, let $\eta(x) \in \mathbb{R}^m$ be a sub-Gaussian random vector with zero mean. Given the clean points $x_1,\ldots,x_n$, the noise vectors $\eta_1,\ldots,\eta_{n}$ are sampled independently from $\eta(x_1),\ldots,\eta(x_{n})$, respectively. 
Therefore, each clean point $x_i$ is first sampled independently from $\mathcal{M}$ according to the density function $q(x)$, and then each noisy observation $y_i$ is produced by~\eqref{eq: noisy data} according to the realization of the random vector $\eta(x_i)$. 
Let $\Vert \eta (x) \Vert_{\psi_2}$ be the sub-Gaussian norm of $\eta (x)$~\cite{vershynin2018high}, given by $\Vert \eta (x) \Vert_{\psi_2} = \sup_{\Vert y \Vert_2 = 1} \Vert \langle \eta(x), y\rangle \Vert_{\Psi_2}$, where $\Vert \cdot \Vert_{\Psi_2}$ in the right-hand side is the sub-Gaussian norm of a random variable~\cite{vershynin2018high}. To control the magnitude of the noise, we make the following assumption.
\begin{assump} \label{assump: noise magnitude}
$E := \max_{x\in\mathcal{M}} \Vert \eta(x) \Vert_{\psi_2} \leq C/(m^{1/4} \sqrt{\log m})$ for a constant $C>0$.
\end{assump}
For instance, Assumption~\ref{assump: noise magnitude} holds if $\eta(x)$ is a multivariate normal with covariance $\Sigma(x) \in \mathbb{R}^{m\times m}$ satisfying $\Vert \Sigma(x) \Vert_2 \leq C/(m^{1/4} \sqrt{\log m})$ for all $x\in \mathcal{M}$. Observe that this includes the special case $\Sigma(x) = I_m/\sqrt{m}$, where $I_m \in \mathbb{R}^{m\times m}$ is the identity matrix. In this case, the noise magnitude at $x\in\mathcal{M}$ is $\mathbb{E}\Vert \eta(x) \Vert_2^2 = \operatorname{Tr}\{\Sigma(x)\} = 1$, which is equal or grater to the magnitude of the clean point $\Vert x \Vert_2^2$ (by Assumption~\ref{assump: manifold}). Moreover, in the more extreme case of $\Sigma(x) = I_m/(m^{1/4} \sqrt{\log m})$, the noise magnitude is $\mathbb{E}\Vert \eta(x) \Vert_2^2 = \sqrt{m}/\log m$, which is growing in $m$ and can be much larger than $\Vert x \Vert_2^2$. Note that the distribution of the noise can vary across $x\in \mathcal{M}$. In particular, we can have regions of $\mathcal{M}$ where $\mathbb{E}\Vert \eta(x) \Vert_2^2$ is very large and others where it is very small, allowing for considerable noise heteroskedasticity. Even if $\eta(x)$ is identically distributed across $x\in\mathcal{M}$, the norm $\Vert \eta(x) \Vert_2$ is allowed to have a heavy tail that prohibits $\Vert \eta(x) \Vert_2$ from concentrating around a global constant. For instance, we can take $\eta(x)$ to be the zero vector with probability $p\in (0,1)$ and sampled uniformly from a bounded subset of $\mathbb{R}^m$ with probability $1-p$. This is a useful model for describing outliers, in which case the magnitudes $\Vert \eta(x_i) \Vert_2^2$ can fluctuate substantially over $i=1,\ldots,n$. Lastly, we emphasize that the coordinates of $\eta(x)$ need not be independent nor identically distributed. Figure~\ref{fig: prototypical noise models} in Section~\ref{sec: application to inference} provides a two-dimensional visualization of prototypical noise models covered in our setting. 

To analyze the doubly stochastic scaling in large sample size and high dimension, we require the dimension to be at least some fractional power of the sample size, that is
\begin{assump} \label{assump: dimensions}
$m\geq n^\gamma$ for a constant $\gamma > 0$.
\end{assump}
We note that the requirement $m \geq n^\gamma$ can be modified to include an arbitrary constant $c>0$, namely $m \geq c n^\gamma$. This constant was set to $1$ for simplicity.

We treat the quantities $d$, $\gamma$, $C$, $q(x)$, and the geometry of $\mathcal{M}$ (e.g., curvature, reach) as fixed and independent of $m$, $n$, $\epsilon$, and $E$, which can vary. To fix the geometry of $\mathcal{M}$ and make it independent of $m$, one can consider a manifold that is first embedded in $\mathbb{R}^r$ for fixed $r>0$, and then embedded in $\mathbb{R}^m$ for any $m>r$ via a rigid transformation (i.e., a composition of rotations, translations, and reflections). Rigid transformations preserve the pairwise Euclidean distances $\{\Vert x-y\Vert_2\}_{x,y\in\mathcal{M}}$, thereby making all geometric properties of $\mathcal{M}$ (both intrinsic and extrinsic) independent of $m$.

To state our results, we introduce the positive function $\rho_{\epsilon}:\mathcal{M}\rightarrow \mathbb{R}_+$ that solves the integral equation
\begin{equation}
    \frac{1}{(\pi \epsilon)^{d/2}}\int_\mathcal{M} \rho_{\epsilon}(x) \mathcal{K}_{\epsilon}(x,y) \rho_{\epsilon}(y) q(y) d\mu(y) = 1, \label{eq:integral scaling eq with density}
\end{equation}
for all $x\in\mathcal{M}$. The integral equation~\eqref{eq:integral scaling eq with density} and the scaling function $\rho_{\epsilon}(x)$ can be interpreted as population analogues of the scaling equation~\eqref{eq:discrete scaling eq} and the scaling factors $\mathbf{d}$, respectively. Due to the compactness of $\mathcal{M}$ and the positivity and continuity of $\mathcal{K}_{\epsilon}(x,y)$ and $q(x)$, the results in~\cite{borwein1994entropy} (see in particular Theorem 5.2 and Corollaries 4.12 and 4.19 therein) guarantee that the solution $\rho_{\epsilon}(x)$ exists and is a unique positive and continuous function on $\mathcal{M}$ (see also~\cite{knopp1968note}). Note that $\rho_{\epsilon}(x)$ depends on the kernel bandwidth parameter $\epsilon$. Table~\ref{table: notation} summarizes common symbols and notation used in the results of this section.

We now have the following theorem, which characterizes the scaled matrix $W$ and the scaling factors $\mathbf{d}$ for large sample size $n$ and high dimension $m$.
\begin{theorem} \label{thm:scaling factors asym expression}
Under Assumptions~\ref{assump: manifold},\ref{assump: noise magnitude},\ref{assump: dimensions}, there exist $t_0,m_0(\epsilon),n_0(\epsilon), C^{'}({\epsilon})>0$, 
such that for all $m>m_0(\epsilon)$ and $n>n_0(\epsilon)$, we have
\begin{align}
    W_{i,j} &= \frac{\rho_{\epsilon}(x_i) \mathcal{K}_{\epsilon}(x_i,x_j) \rho_{\epsilon}(x_j)}{(n-1)(\pi \epsilon)^{d/2}}  \left( 1 + \mathcal{E}_{i,j} \right), \label{eq: W_ij variance error} \\
    d_i &= \frac{\rho_{\epsilon}(x_i)}{\sqrt{(n-1)(\pi \epsilon)^{d/2}}} \operatorname{exp}\left( \frac{\Vert \eta_i \Vert_2^2}{\epsilon} \right) \left( 1 + \mathcal{E}_{i,i} \right), \label{eq: d_i variance error} 
\end{align}
for all $i,j=1,\ldots,n$, $i\neq j$, where $\rho_{\epsilon}(x)$ is the solution to~\eqref{eq:integral scaling eq with density}, and
\begin{equation}
    \max_{i,j} \left\vert \mathcal{E}_{i,j} \right\vert \leq {t} C^{'}({\epsilon}) \max \left\{ E\sqrt{\log m}, E^2 \sqrt{m\log m}, \sqrt{\frac{\log n}{n}}\right\}, \label{eq: variance error probabilistic bound}
\end{equation}
with probability at least $1-n^{-t}$, for any $t>t_0$.
\end{theorem}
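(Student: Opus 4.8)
The plan is to peel the noise off the kernel, reduce to the doubly stochastic scaling of a near-clean kernel, and then establish a quantitative sample-to-population stability estimate for the scaling factors. I begin by expanding the noisy squared distance as
\begin{equation*}
\Vert y_i - y_j\Vert_2^2 = \Vert x_i - x_j\Vert_2^2 + \Vert\eta_i\Vert_2^2 + \Vert\eta_j\Vert_2^2 + \delta_{i,j}, \qquad \delta_{i,j} := 2\langle x_i - x_j, \eta_i - \eta_j\rangle - 2\langle\eta_i,\eta_j\rangle .
\end{equation*}
The linear terms $\langle x_i - x_j,\eta_\ell\rangle$ are sub-Gaussian with parameter $O(E)$ (using $\Vert x\Vert_2\le 1$ from Assumption~\ref{assump: manifold}), whereas the bilinear term $\langle\eta_i,\eta_j\rangle$ has zero mean (by independence and zero means) and concentrates via a Hanson--Wright-type bound with fluctuations of order $E^2\sqrt{m}$. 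A union bound over the $O(n^2)$ pairs, combined with Assumption~\ref{assump: dimensions} to absorb $\sqrt{\log n}$ into $\sqrt{\log m}$, yields $\max_{i\neq j}|\delta_{i,j}| \lesssim \max\{E\sqrt{\log m},\,E^2\sqrt{m\log m}\}$ with probability at least $1-n^{-t}$. By Assumption~\ref{assump: noise magnitude} this quantity divided by the fixed $\epsilon$ tends to zero, so $\exp(-\delta_{i,j}/\epsilon)=1+O(\delta_{i,j}/\epsilon)$.

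Writing $g_i := \exp(-\Vert\eta_i\Vert_2^2/\epsilon)$, the decomposition gives for $i\ne j$ the factorization $K_{i,j} = g_i\,\mathcal{K}_\epsilon(x_i,x_j)\,g_j\,\exp(-\delta_{i,j}/\epsilon)$. Thus the noise magnitudes enter only through a diagonal conjugation by the $g_i$, which the scaling absorbs exactly: if $d_i$ solves~\eqref{eq:discrete scaling eq} for $K$, then $\tilde d_i := d_i g_i$ are precisely the scaling factors of the perturbed clean kernel $\widetilde K_{i,j} := \mathcal{K}_\epsilon(x_i,x_j)\exp(-\delta_{i,j}/\epsilon)$ (with zeroed diagonal), and $W_{i,j}=\tilde d_i\widetilde K_{i,j}\tilde d_j$. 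This already yields the exponential form $d_i = \tilde d_i\exp(\Vert\eta_i\Vert_2^2/\epsilon)$ of~\eqref{eq: d_i variance error}, and reduces everything to showing that $\tilde d_i$ concentrates around $\rho_\epsilon(x_i)/\sqrt{(n-1)(\pi\epsilon)^{d/2}}$, a statement that no longer involves the (possibly large) noise magnitudes.

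To prove this, I would substitute the candidate factors $\tilde d_i^{\ast} := \rho_\epsilon(x_i)/\sqrt{(n-1)(\pi\epsilon)^{d/2}}$ into the scaling equation and bound the row-sum residual
\begin{equation*}
r_i := \sum_{j\neq i}\tilde d_i^{\ast}\,\widetilde K_{i,j}\,\tilde d_j^{\ast} - 1 = \frac{\rho_\epsilon(x_i)}{(\pi\epsilon)^{d/2}}\cdot\frac{1}{n-1}\sum_{j\neq i}\rho_\epsilon(x_j)\mathcal{K}_\epsilon(x_i,x_j)\exp(-\delta_{i,j}/\epsilon) - 1 .
\end{equation*}
Replacing $\exp(-\delta_{i,j}/\epsilon)$ by $1$ costs $O(\max_j|\delta_{i,j}|/\epsilon)$, and the empirical average concentrates around $\int_{\mathcal{M}}\rho_\epsilon(y)\mathcal{K}_\epsilon(x_i,y)q(y)\,d\mu(y)$ with a Bernstein/Hoeffding fluctuation of order $\sqrt{\log n/n}$ (the summands are bounded for fixed $\epsilon$). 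By the population integral equation~\eqref{eq:integral scaling eq with density} the main term equals $1$ exactly, so a second union bound gives $\max_i|r_i|\lesssim\max\{E\sqrt{\log m},E^2\sqrt{m\log m},\sqrt{\log n/n}\}$ with high probability, matching~\eqref{eq: variance error probabilistic bound}.

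Finally, I would convert the smallness of the residual into smallness of the factor errors $\mathcal{E}_{i,i}$. Parametrizing by $u_i=\log\tilde d_i$ (with $u^{\ast}=\log\tilde d^{\ast}$), the row-sum map $u\mapsto(\sum_{j\ne i} e^{u_i}\widetilde K_{i,j}e^{u_j})_i$ has Jacobian $I+W$ at the true solution, since $\partial_{u_k}R_i = W_{i,k}$ for $k\ne i$ and $\partial_{u_i}R_i=R_i=1$, while $W_{i,i}=0$. Because $W$ is symmetric, doubly stochastic and irreducible, its spectrum lies in $(-1,1]$, so $I+W$ is invertible, and a quantitative inverse-function (Newton) argument would give $\Vert u-u^{\ast}\Vert_\infty \lesssim \Vert(I+W)^{-1}\Vert\,\Vert r\Vert_\infty$ up to a controlled quadratic remainder. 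Propagating $\tilde d_i=\tilde d_i^{\ast}(1+\mathcal{E}_{i,i})$ together with the kernel perturbation back into $W_{i,j}=\tilde d_i\widetilde K_{i,j}\tilde d_j$ then produces the claimed forms~\eqref{eq: W_ij variance error}--\eqref{eq: d_i variance error} with $\mathcal{E}_{i,j}\approx\mathcal{E}_{i,i}+\mathcal{E}_{j,j}+O(\delta_{i,j}/\epsilon)$. \textbf{The main obstacle is this last step}: obtaining \emph{entrywise} ($\ell_\infty$) control of the scaling factors, since $\Vert W\Vert_{\infty\to\infty}=1$ makes a naive Neumann expansion of $(I+W)^{-1}$ diverge. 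I expect to need the $\ell_2$ spectral gap $1+\lambda_{\min}(W)$ --- whose dependence on $\epsilon$ and the geometry of $\mathcal{M}$ is exactly what produces the $\epsilon$-dependent constant $C^{'}(\epsilon)$ and the thresholds $m_0(\epsilon),n_0(\epsilon)$ --- combined with the nonnegativity and stochasticity of $W$ to transfer the estimate to $\ell_\infty$, all while controlling the nonlinear remainder uniformly in $i$.
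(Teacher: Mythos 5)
Up to the decisive stability step, your proposal follows the paper's proof (Appendix~\ref{appendix: proof of theorem for concentration of scaling factors and scaled matrix}) essentially line for line: the paper performs the same diagonal-conjugation reduction (with $h_i = d_i\operatorname{exp}\{-\Vert y_i\Vert_2^2/\epsilon\}$ and $H_{i,j}=\operatorname{exp}\{2\langle y_i,y_j\rangle/\epsilon\}$, which differs from your $\tilde d_i = d_i g_i$ and $\widetilde K$ only by a further, harmless conjugation by $\operatorname{exp}\{-\Vert x_i\Vert_2^2/\epsilon\}$), proves the same inner-product concentration with the same rates $E\sqrt{\log m}$ and $E^2\sqrt{m\log m}$ (Lemma~\ref{lem:noise scalar product concentration}), and bounds the same row-sum residual of the candidate factors $\rho_\epsilon(x_i)/\sqrt{(n-1)(\pi\epsilon)^{d/2}}$ by Hoeffding's inequality plus the integral equation~\eqref{eq:integral scaling eq with density}. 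So the noise-absorption mechanism and the error $\max\{E\sqrt{\log m},\,E^2\sqrt{m\log m},\,\sqrt{\log n/n}\}$ are obtained exactly as in the paper.

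The genuine gap is the step you yourself flag as the main obstacle: converting the small residual $\max_i |r_i|$ into entrywise control of the true scaling factors. Your sketch (invert the Jacobian $I+W$, use the $\ell_2$ spectral gap $1+\lambda_{\min}(W)$, transfer to $\ell_\infty$) is not carried out: you give no lower bound on the spectral gap, no mechanism for the $\ell_2\to\ell_\infty$ transfer, and no control of the nonlinear Newton remainder uniformly in $i$ --- and these three items are the entire content of the missing step. The paper closes this gap with a different and entirely elementary tool, Lemma~\ref{lem:closeness of scaling factors}: for a symmetric nonnegative matrix $A$ with zero diagonal, if $\hat{\mathbf v}$ satisfies the scaling equations up to $\varepsilon$, then writing $u_i = v_i/\hat v_i$ and examining the rows where $u$ attains its minimum and maximum, the row-sum equations give $1/(1+\varepsilon)\le \max_i u_i\cdot\min_i u_i\le 1/(1-\varepsilon)$, and a second summation over the $n-2$ off-diagonal entries of those two rows bounds $1-\min_i u_i/\max_i u_i$ by $4\varepsilon/\bigl((n-2)\min_{i\neq j}\hat v_i A_{i,j}\hat v_j\bigr)$, which is $O(\varepsilon)$ here because $(n-2)\,\tilde h_i H_{i,j}\tilde h_j$ is bounded below for fixed $\epsilon$ (using that $\rho_\epsilon$ is bounded away from $0$ on the compact $\mathcal{M}$, together with the inner-product concentration). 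This yields the multiplicative $\ell_\infty$ bound on $h_i/\tilde h_i$ directly, with no linearization, no spectral theory, and no smallness hypotheses beyond $\varepsilon<1$. Your spectral route could plausibly be completed --- positive semidefiniteness of the full Gaussian kernel matrix forces $\lambda_{\min}$ of the doubly stochastic matrix built at the candidate point to be at least $-O(1/n)-o(1)$, so the gap is near $1$ --- but as written the proposal stops exactly where the proof has to start working, so it does not yet constitute a proof of the theorem.
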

Theorem~\ref{thm:scaling factors asym expression} provides explicit asymptotic expressions for the doubly stochastic matrix $W$ and the associated scaling factors $\mathbf{d}$ in terms of the quantities in our setup, as well as a high-probability bound on the relative pointwise errors $\mathcal{E}_{i,j}$ with explicit dependence on $m$, $n$, and $E$. We note that $t_0,m_0(\epsilon),n_0(\epsilon), C^{'}({\epsilon})$ in Theorem~\ref{thm:scaling factors asym expression} all depend on $d$, $\gamma$, $C$, $q(x)$, and on the geometry of $\mathcal{M}$, which are considered as fixed in our setup. The notation $m_0(\epsilon),n_0(\epsilon), C^{'}({\epsilon})$ means that these quantities additionally depend on $\epsilon$. 

Observe that under Assumption~\ref{assump: noise magnitude}, the quantities $E\sqrt{\log m}$ and $E^2 \sqrt{m\log m}$ appearing in~\eqref{eq: variance error probabilistic bound} always converge to zero as $m\rightarrow \infty$. Moreover, since $m$ is growing with $n$ by Assumption~\ref{assump: dimensions}, all three quantities $E\sqrt{\log m}$, $E^2 \sqrt{m\log m}$, and $\sqrt{\log n / n}$ converge to zero as $n\rightarrow \infty$. Consequently, Theorem~\ref{thm:scaling factors asym expression} implies that if we fix a bandwidth parameter $\epsilon$, then all pointwise errors $\mathcal{E}_{i,j}$ appearing in~\eqref{eq: W_ij variance error} and~\eqref{eq: d_i variance error} converge almost surely to zero as $n\rightarrow \infty$. 

According to~\eqref{eq: variance error probabilistic bound}, the convergence rate of $\max_{i,j} \left\vert \mathcal{E}_{i,j} \right\vert$ to zero is bounded by the largest among $E\sqrt{\log m}$, $E^2 \sqrt{m\log m}$, and $\sqrt{\log n / n}$. If no noise is present, i.e. $E = 0$, this rate is bounded by $\sqrt{\log n / n}$, which describes the sample-to-population convergence and is independent of the ambient dimension $m$. In fact, in this case, we do not actually require Assumptions~\ref{assump: noise magnitude} and~\ref{assump: dimensions}. On the other hand, if noise is present, then the convergence rate depends also on the maximal sub-Gaussian norm $E$ and on the ambient dimension $m$. Let us suppose for simplicity that $m \propto n$. In this case, as long as $E \leq C/\sqrt{m}$, then the convergence rate of $\max_{i,j} \left\vert \mathcal{E}_{i,j} \right\vert$ to zero is still bounded by $\sqrt{\log n / n}$. As discussed earlier, a simple example for this case is if $\eta(x)$ is multivariate normal with covariance $\Sigma(x)$ satisfying $\Vert \Sigma(x) \Vert_2 \leq C/\sqrt{m}$, which allows the magnitude of the noise $\mathbb{E}\Vert \eta(x) \Vert_2^2$ to be comparable to the signal magnitude $\Vert x \Vert_2^2$. If $m \propto n$ and $E$ decays more slowly than $1/\sqrt{m}$, then the bound on $\max_{i,j} \left\vert \mathcal{E}_{i,j} \right\vert$ becomes dominated by the term $E^2 \sqrt{m\log m}$, which converges to zero as $m,n\rightarrow \infty$ even though the noise magnitude $\mathbb{E}\Vert \eta(x) \Vert_2^2$ can possibly diverge (see the discussion following Assumption~\ref{assump: noise magnitude}). 

The proof of Theorem~\ref{thm:scaling factors asym expression} can be found in Appendix~\ref{appendix: proof of theorem for concentration of scaling factors and scaled matrix}, and relies on two main ingredients. The first is the decomposition~\sloppy$\mathcal{K}_{\epsilon}(y_i,y_j) = \operatorname{\exp}\{-\Vert y_i \Vert_2^2/\epsilon\}  \operatorname{\exp}\{2\langle y_i,y_j \rangle/\epsilon\} \operatorname{\exp}\{-\Vert y_j \Vert_2^2/\epsilon\}$, which can be viewed as diagonal scaling of the nonnegative matrix $(\operatorname{\exp}\{2\langle y_i,y_j \rangle/\epsilon\})$, together with the fact that the inner products $\langle y_i,y_j \rangle$ concentrate around their clean counterparts $\langle x_i,x_j \rangle$ for $i\neq j$ under high-dimensional sub-Gaussian noise; see Lemma~\ref{lem:noise scalar product concentration} in Appendix~\ref{appendix: supporting lemmas}. The second ingredient is a refined stability analysis of the scaling factors of a symmetric nonnegative matrix with zero main diagonal; see Lemma~\ref{lem:closeness of scaling factors} in Appendix~\ref{appendix: supporting lemmas}, which improves upon the analysis in~\cite{landa2021doubly}. These two ingredients are combined with a perturbation analysis of the Gaussian kernel and large-sample concentration arguments to prove the results in Theorem~\ref{thm:scaling factors asym expression}.

Theorem~\ref{thm:scaling factors asym expression} asserts that for sufficiently large $m$ and $n$, $W_{i,j}$ is close to the clean Gaussian kernel $\mathcal{K}_{\epsilon}(x_i,x_j)$ up to a constant factor and the multiplicative bias term $\rho_{\epsilon}(x_i) \rho_{\epsilon}(x_j)$. This bias term is determined by the scaling function $\rho_{\epsilon}(x)$ that solves~\eqref{eq:integral scaling eq with density}, which depends on the geometry of $\mathcal{M}$ and the density $q(x)$ in a non-trivial way and does not admit a closed form expression in general. Nonetheless, we can provide an explicit approximation to $\rho_{\epsilon}(x)$ when $\epsilon$ is small. To that end, we first assume that $q(x)$ is sufficiently smooth on $\mathcal{M}$, specifically
\begin{assump} \label{assump: smoothness}
$q \in \mathcal{C}^6(\mathcal{M})$.
\end{assump}
Let $p\geq 1$ and define $\Vert \cdot \Vert_{L^p(\mathcal{M},d\mu)}$ as the standard $L^p$ norm on $\mathcal{M}$ with measure $d\mu$, i.e., $\Vert f \Vert_{L^p(\mathcal{M},d\mu)} = (\int_\mathcal{M} |f_{\epsilon}(x)|^p d\mu)^{1/p}$ for any $f: \mathcal{M} \rightarrow \mathbb{R}$. In addition, we denote by $\Delta_\mathcal{M}\{f\}(x)$ the negative Laplace-Beltrami operator on $\mathcal{M}$ applied to $f$ and evaluated at $x\in\mathcal{M}$. We now have the following result.
\begin{theorem} \label{thm: scaling function convergence}
Under Assumptions~\ref{assump: manifold} and~\ref{assump: smoothness}, for any $p \in [1, 4/3)$, there exist constants $\epsilon_0, c_p>0$ and a function $\omega:\mathcal{M}\rightarrow \mathbb{R}$ that depends only on the geometry of $\mathcal{M}$, such that for all $\epsilon\leq \epsilon_0$,
\begin{equation}
    \left\Vert \rho_{\epsilon} - q^{-1/2} F_{\epsilon} \right\Vert_{L^{p}(\mathcal{M},d\mu)} \leq c_p \epsilon^2,
    \label{eq: L_p convergence}
\end{equation}
where $\rho_{\epsilon}(x)$ is the solution to~\eqref{eq:integral scaling eq with density}, and
\begin{equation}
    F_{\epsilon}(x) = 1 - \frac{\epsilon}{8}\left( \omega(x) - \frac{\Delta_\mathcal{M}\{q^{-1/2}\}(x)}{\sqrt{q(x)}}\right). \label{eq: F_eps def}
\end{equation}
If $d\leq 5$, then~\eqref{eq: L_p convergence} holds for any $p \in [1, 2]$, and moreover, there exist $\epsilon_0^{'}, c^{'}>0$, 
such that for all $\epsilon\leq \epsilon_0^{'}$ and $x\in\mathcal{M}$,
\begin{equation}
    \left\vert \rho_{\epsilon}(x) - q^{-1/2}(x) F_{\epsilon}(x) \right\vert
    \leq c^{'} \epsilon^{2-d/4}. \label{eq: pointwise convergence with d <= 2}
\end{equation}
\end{theorem}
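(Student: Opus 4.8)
The plan is to treat the scaling equation~\eqref{eq:integral scaling eq with density} as a nonlinear fixed-point problem for $\rho_{\epsilon}$ and to carry out matched asymptotics around its leading-order solution, using the Coifman--Lafon short-time expansion of the Gaussian averaging operator as the main engine. Writing $G_{\epsilon}[f](x)=(\pi\epsilon)^{-d/2}\int_{\mathcal{M}}\mathcal{K}_{\epsilon}(x,y)f(y)\,d\mu(y)$, equation~\eqref{eq:integral scaling eq with density} reads $\rho_{\epsilon}(x)\,G_{\epsilon}[\rho_{\epsilon}q](x)=1$. Since $G_{\epsilon}\to\mathrm{Id}$ as $\epsilon\to0$, the leading balance is $\rho_{0}^{2}q=1$, giving $\rho_{0}=q^{-1/2}$ as the candidate limit. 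Before anything quantitative, I would record from the existence theory of~\cite{borwein1994entropy} that $\rho_{\epsilon}$ is positive and continuous, and then establish two-sided bounds $0<c\le\rho_{\epsilon}\le C$ uniform in $\epsilon$, together with enough equicontinuity to justify inserting $\rho_{\epsilon}$ into the kernel expansion.

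Next I would pin down the first-order term. Substituting the ansatz $\rho_{\epsilon}=q^{-1/2}+\epsilon\rho_{1}+O(\epsilon^{2})$ and the expansion $G_{\epsilon}[f]=f+\tfrac{\epsilon}{4}\big(\omega f-\Delta_{\mathcal{M}}f\big)+O(\epsilon^{2})$, in which $\omega$ is the purely geometric curvature potential produced by the expansion, I would collect the $O(\epsilon)$ contributions of $\rho_{\epsilon}\,G_{\epsilon}[\rho_{\epsilon}q]$ and set them to zero. This gives a pointwise algebraic identity for $\rho_{1}$ whose solution is a multiple of $q^{-1/2}$, recovering $\rho_{\epsilon}\approx q^{-1/2}F_{\epsilon}$ with the first-order correction $F_{\epsilon}$ exactly as in~\eqref{eq: F_eps def}. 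Assumption~\ref{assump: smoothness} ($q\in\mathcal{C}^{6}$) enters here: $F_{\epsilon}$ already costs two derivatives of $q$, and forcing the remainder of the kernel expansion to be genuinely $O(\epsilon^{2})$ uniformly consumes four more, so six derivatives are needed.

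The rigorous core is the stability step, which turns the residual estimate $\Phi_{\epsilon}(q^{-1/2}F_{\epsilon})-1=O(\epsilon^{2})$, where $\Phi_{\epsilon}(\rho):=\rho\,G_{\epsilon}[\rho q]$, into the error bound~\eqref{eq: L_p convergence} for $u_{\epsilon}:=\rho_{\epsilon}-q^{-1/2}F_{\epsilon}$. Linearizing $\Phi_{\epsilon}$ at the approximate solution gives $D\Phi_{\epsilon}[v]\approx q^{1/2}v+q^{-1/2}G_{\epsilon}[qv]$; the substitution $w=q^{1/2}v$ turns this into $w+q^{-1/2}G_{\epsilon}[q^{1/2}w]$, which is similar, by conjugation with multiplication by $q^{1/2}$, to $\mathrm{Id}+G_{\epsilon}$. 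Because $\mathcal{K}_{\epsilon}$ is a positive-definite kernel, $G_{\epsilon}$ is a nonnegative self-adjoint operator on $L^{2}(\mathcal{M},d\mu)$ of norm $\approx 1$, so $\mathrm{Id}+G_{\epsilon}$ has spectrum in $[1,2]$ and a bounded inverse that is uniform in $\epsilon$, \emph{without} any spectral-gap requirement. Passing the $O(\epsilon^{2})$ residual through this inverse and absorbing the quadratic remainder $u_{\epsilon}\,G_{\epsilon}[u_{\epsilon}q]$ via the smoothing bound $\|G_{\epsilon}[g]\|_{L^{\infty}}\lesssim\epsilon^{-d/(2p)}\|g\|_{L^{p}}$ yields $\|u_{\epsilon}\|_{L^{p}}\lesssim\epsilon^{2}$; it is exactly the bookkeeping of these negative powers of $\epsilon$ against the dimension $d$ that confines the admissible exponents to $p\in[1,4/3)$ in general and to $p\in[1,2]$ once $d\le5$.

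Finally I would upgrade the $L^{p}$ bound to the pointwise estimate~\eqref{eq: pointwise convergence with d <= 2} in the regime $d\le5$. Since $u_{\epsilon}$ solves an equation assembled from the smoothing operator $G_{\epsilon}$, it is effectively band-limited at frequency scale $\epsilon^{-1/2}$, so a Bernstein-type inverse inequality $\|u_{\epsilon}\|_{L^{\infty}}\lesssim\epsilon^{-d/4}\|u_{\epsilon}\|_{L^{2}}$ applies; combined with $\|u_{\epsilon}\|_{L^{2}}\lesssim\epsilon^{2}$, available precisely when $d\le5$, this produces the $\epsilon^{2-d/4}$ rate. I expect the main obstacle to be this uniform-in-$\epsilon$ stability analysis: securing the a priori regularity and the two-sided bounds on $\rho_{\epsilon}$ needed to launch the expansion, and then controlling the inverse of the linearized nonlocal operator together with the nonlinear remainder in the sharp $L^{p}$ scale, so that the dimension-dependent thresholds ($p<4/3$, and $d\le5$ for the pointwise bound) emerge with the correct constants.
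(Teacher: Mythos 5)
Your proposal has a genuine gap, and it sits exactly where the difficulty of this theorem lies. You treat uniform two-sided bounds $0<c\le\rho_{\epsilon}\le C$ (uniform in $\epsilon$), plus equicontinuity, as a routine preliminary to be "recorded" before the quantitative work. But no such bounds are available: the paper can only prove the much weaker Chebyshev-type tail bound $\vert\{x:\rho_{\epsilon}(x)>t\}\vert\le C/t^{2}$ (Lemma~\ref{lem: boundedness of rho}), a uniform upper bound on $\rho_{\epsilon}$ is obtained only for $d\le 5$ and only as an \emph{output} of the $L^{p}$ analysis (via uniform convergence of $1/\rho_{\epsilon}$ to $\sqrt{q}$), and the paper's discussion explicitly lists any sharpening of Lemma~\ref{lem: boundedness of rho} as an open problem. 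A quick sanity check shows your assumption cannot be innocuous: writing $\rho_{\epsilon}=q^{-1/2}F_{\epsilon}(1+e)$, multiplying the residual identity by $e\,q$, integrating, and using positive-definiteness of the kernel gives $\int_{\mathcal{M}} e^{2}/\rho_{\epsilon}\,d\mu\lesssim\epsilon^{2}\int_{\mathcal{M}}\vert e\vert\,d\mu$; if $\rho_{\epsilon}\le C$ uniformly this yields $\Vert e\Vert_{L^{2}}\lesssim\epsilon^{2}$ in \emph{every} dimension, i.e.\ a statement strictly stronger than the theorem, with none of its restrictions. The restriction $p<4/3$ in~\eqref{eq: L_p convergence} does not come from bookkeeping negative powers of $\epsilon$ in a smoothing estimate, as you suggest; it comes from combining the tail bound of Lemma~\ref{lem: boundedness of rho} with H\"older applied to $f=\vert e\vert^{2/\ell}$, $g=\rho_{\epsilon}^{1/\ell}$, which forces $\ell>3/2$, i.e.\ $p=2/\ell<4/3$. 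Similarly, $d\le 5$ enters because the $L^{p}$ bound with $p\uparrow 4/3$ controls $G_{\epsilon}[F_{\epsilon}e\sqrt{q}]$ pointwise only up to $\mathcal{O}(\epsilon^{2-3d/8})$, which vanishes precisely when $d\le 5$; that is what delivers uniform boundedness of $\rho_{\epsilon}$, the upgrade from $L^{1}$ to $L^{2}$, and then~\eqref{eq: pointwise convergence with d <= 2}.

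Your stability step has a second, related problem: circularity. Inverting $\mathrm{Id}+G_{\epsilon}$ and "absorbing the quadratic remainder" $u_{\epsilon}G_{\epsilon}[u_{\epsilon}q]$ via the smoothing bound requires a priori smallness $\Vert u_{\epsilon}\Vert_{L^{2}}\ll\epsilon^{d/4}$, which is exactly what is being proven; the existence theory of~\cite{borwein1994entropy} gives no starting point for this bootstrap. To make your route rigorous you would need a contraction-mapping construction of a solution inside an $\mathcal{O}(\epsilon^{2})$-ball around $q^{-1/2}F_{\epsilon}$, followed by an identification of that fixed point with $\rho_{\epsilon}$ through the uniqueness theorem (which in turn requires showing the constructed fixed point is positive and continuous) --- none of which appears in your sketch. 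Note that the paper never inverts a linearized operator: it uses the scaling equation~\eqref{eq:integral scaling eq with density} itself to rewrite the term involving $e(x)$ exactly, and invokes positive-definiteness of $\mathcal{K}_{\epsilon}$ only to discard a nonnegative cross term after integration. Finally, your "band-limited plus Bernstein" justification of the pointwise bound is heuristic --- $u_{\epsilon}$ is not band-limited --- although the numerology $\epsilon^{-d/4}\cdot\epsilon^{2}$ is right; the rigorous version is simply the H\"older/smoothing estimate applied to the exact identity for $1/\rho_{\epsilon}$, which is how the paper proves~\eqref{eq: pointwise convergence with d <= 2}.
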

The first part of Theorem~\ref{thm: scaling function convergence} provides an asymptotic approximation to $\rho_{\epsilon}(x)$ with an $L^p(\mathcal{M},d\mu)$ error of $\mathcal{O}(\epsilon^2)$ for $p<4/3$. This approximation is equal to $q^{-1/2}$ to zeroth-order with a first-order correction term that depends additionally on the manifold geometry and on the smoothness of the density. The second part of Theorem~\ref{thm: scaling function convergence} improves upon this result in the case of $d\leq 5$, where the convergence now is in $L^2(\mathcal{M},d\mu)$ with the same rate of $\mathcal{O}(\epsilon^2)$. Moreover, in this case, we have uniform pointwise convergence on $\mathcal{M}$ with a rate at least $\mathcal{O}(\epsilon^{2-d/4})$. If $d\leq 3$, then this result implies the first-order pointwise asymptotic approximation $\rho_{\epsilon}(x) \sim q^{-1/2}(x)F_{\epsilon}(x)$ uniformly for $x\in \mathcal{M}$. Otherwise, if $d=4$ or $d=5$,~\eqref{eq: pointwise convergence with d <= 2} only implies the zeroth-order pointwise asymptotic approximation $\rho_{\epsilon}(x)\sim q^{-1/2}(x)$ (since the error in the right-hand side of~\eqref{eq: pointwise convergence with d <= 2} becomes $\mathcal{O}(\epsilon)$ or larger). We note that the expression $q^{-1/2}F_\epsilon$ was also adopted in~\cite{cheng2022bi} to construct an approximate solution to~\eqref{eq:integral scaling eq with density}, yet our results here prove the convergence of $\rho_\epsilon$ to $q^{-1/2}F_\epsilon$, which did not appear previously. 

The proof of Theorem~\ref{thm: scaling function convergence} can be found in Appendix~\ref{appendix: proof of scaling function convergence} and is based on the following approach. First, we construct a certain covering of $\mathcal{M}$ to show that the measure of the set $\{x: \rho_{\epsilon}(x) > t\}$ is upper bounded by $c/t^2$ for some constant $c>0$ that depends only on the manifold $\mathcal{M}$ and the density $q$; see Lemma~\ref{lem: boundedness of rho} in Appendix~\ref{appendix: supporting lemmas}. Then, to establish~\eqref{eq: L_p convergence}, we make use of a technical manipulation of the integral equation~\eqref{eq:integral scaling eq with density} that relies on the aforementioned Lemma~\ref{lem: boundedness of rho}, the positive definiteness of the Gaussian kernel (as an integral operator), and the asymptotic expansion developed in~\cite{coifman2006diffusionMaps} (see also Lemma~\ref{lem: diffusion maps lemma} in Appendix~\ref{appendix: supporting lemmas}). 
In the special case of $d\leq 5$, the $L_p(\mathcal{M},d\mu)$ convergence in~\eqref{eq: L_p convergence} together with Holder's inequality allows us to refine the previous analysis and establish the remaining claims.

By combining Theorems~\ref{thm:scaling factors asym expression} and~\ref{thm: scaling function convergence}, we can describe the convergence of $d_i$ and $W_{i,j}$ to population forms that do not depend on the manifold $\mathcal{M}$ (to zeroth-order in $\epsilon$). In particular, if $d\leq 5$, we are guaranteed that in the asymptotic regime where $m,n\rightarrow \infty$ and $\epsilon \rightarrow 0$ sufficiently slowly, we have 
\begin{equation}
    d_i \sim \frac{1}{\sqrt{(n-1)(\pi \epsilon)^{d/2} q(x_i)}} \operatorname{exp}\left( \frac{\Vert \eta_i \Vert_2^2}{\epsilon} \right), \qquad\qquad 
    W_{i,j} \sim \frac{\mathcal{K}_{\epsilon}(x_i,x_j)}{(n-1)(\pi \epsilon)^{d/2}\sqrt{q(x_i) q(x_j)}}, \label{eq: W_ij and d_i limiting form}
\end{equation}
almost surely for all indices $i\neq j$. Hence, if the sampling density on $\mathcal{M}$ is uniform, i.e., $q(x)$ is a constant function, then $W_{i,j}$ approximates the clean Gaussian kernel $\mathcal{K}_{\epsilon}(x_i,x_j)$ for all $i\neq j$ up to a global constant, even if the noise magnitudes $\Vert \eta_i \Vert_2^2$ are large and fluctuate considerably. In this case, the variability of the scaling factors $d_i$ corresponds to the variability of $\Vert \eta_i \Vert_2^2$, where large values of $d_i$ correspond to strong noise, and vice versa. If the density is not uniform, then $W_{i,j}$ and $d_i$ are also affected by the variability of the density $q(x_i)$. Nonetheless, this effect can be removed by estimating the density and correcting $d_i$ and $W_{i,j}$ accordingly; see Sections~\ref{sec: robust density estimation} and~\ref{sec: recovering noise magnitudes} for more details. 

\section{Applications to inference of density and geometry} \label{sec: application to inference}
In this section, we utilize the doubly stochastic scaling~\eqref{eq:discrete scaling eq} and the results in the previous section to infer various quantities of interest from the noisy data. All numerical experiments described in this section use the scaling algorithm of~\cite{wormell2021spectral} to solve~\eqref{eq:discrete scaling eq} with a tolerance of $10^{-9}$.
To simplify the analysis and statements of the results presented in this section, we work under the following assumption that extends the pointwise first-order convergence of $\rho_{\epsilon}(x)$ in Theorem~\ref{thm: scaling function convergence} to arbitrary intrinsic dimension $d$; see Remark~\ref{remark: assumption on pointwise convergence of rho} below.
\begin{assump} \label{assump: pointwise convergence}
There exist $\beta \in (0,1]$, $\epsilon_0^{'}>0$, and $c^{'}>0$, 
such that for all $\epsilon\leq \epsilon_0^{'}$ and $x\in\mathcal{M}$, $\left\vert \rho_\epsilon(x) - q^{-1/2}(x) F_{\epsilon}(x) \right\vert
    \leq c^{'} \epsilon^{1+\beta}$.
\end{assump}

\begin{remark} \label{remark: assumption on pointwise convergence of rho}
Assumption~\ref{assump: pointwise convergence} requires that $\rho_{\epsilon}(x)$ (the solution to~\eqref{eq:integral scaling eq with density}) is approximated by $q^{-1/2}(x) F_{\epsilon}(x)$ uniformly on $\mathcal{M}$ with an error of $\mathcal{O}(\epsilon^{1+\beta})$ for some $\beta\in [0,1)$. According to Theorem~\ref{thm: scaling function convergence}, Assumption~\ref{assump: pointwise convergence} is immediately satisfied for any $d\leq 3$ (with $\beta = 1 - d/4$) under Assumptions~\ref{assump: manifold} and~\ref{assump: smoothness}. We conjecture that this property also holds in more general settings and for higher intrinsic dimensions, currently not covered by Theorem~\ref{thm: scaling function convergence}. Therefore, we rely on Assumption~\ref{assump: pointwise convergence} to simplify the presentation of our results in this section and state them in more generality for arbitrary intrinsic dimensions. We note that all numerical examples in this section were conducted in settings with $d=1$ that satisfy Assumption~\ref{assump: pointwise convergence}.
\end{remark}

\subsection{Robust manifold density estimation} \label{sec: robust density estimation}
Since the asymptotic expression of the doubly stochastic kernel $W_{i,j}$ in~\eqref{eq: W_ij and d_i limiting form} is invariant to the noise magnitudes $\Vert \eta_i \Vert_2^2$, it is natural to employ $W_{i,j}$ to infer the probability density $q(x_i)$. Recall that the standard Kernel Density Estimator (KDE) using the Gaussian kernel at $x_i$ is given by $D_{i,i}/(n-1) = \sum_{j=1, \; j\neq i}^n \mathcal{K}_{\epsilon}(x_i,x_j) / (n-1)$, which approximates $(\pi \epsilon)^{d/2} q(x_i)$ asymptotically for large $n$ and small $\epsilon$ (see~\cite{wu2022strong} and references therein). Clearly, we cannot directly replace the Gaussian kernel in the KDE with $W$ since $\sum_{j=1}^n W_{i,j} = 1 $. Instead, we propose to employ the nonlinearity $\sum_{j=1}^n [W_{i,j}]^s$ for $s > 0,\; s\neq 1$, where $[W_{i,j}]^s$ is the $s$'th power of $W_{i,j}$. Specifically, we define the {Doubly Stochastic Kernel Density Estimator (DS-KDE)} as
\begin{equation}
    \hat{q}_i = \frac{1}{n-1}\left(\sum_{j = 1}^{n}  [W_{i,j}]^s \right)^{{1}/{(1-s)}}, \label{eq:density estimator formula}
\end{equation}
for $i=1,\ldots,n$. We now have the following result.
\begin{theorem} \label{thm: density estimator} Fix $s>0$, $s\neq 1$. Under Assumptions~\ref{assump: manifold}--\ref{assump: pointwise convergence}, there exist $\epsilon_0,t_0,m_0(\epsilon),n_0(\epsilon), C^{'}({\epsilon})>0$, such that for all $\epsilon < \epsilon_0$, $m>m_0(\epsilon)$, $n>n_0(\epsilon)$, we have
\begin{equation} 
    \hat{q}_i =  ( {\pi \epsilon} )^{d/2}  s^{{d}/{(2(s-1))}} q(x_i) \left[ 1 + \mathcal{O}(\epsilon) +  \mathcal{E}^{(1)}_{i} \right], \label{eq: robust density estimator bias and variance errors}
\end{equation}
for all $i=1,\ldots,n$, where $\max_{i}\vert \mathcal{E}^{(1)}_{i} \vert$ is upper bounded by the right-hand side of~\eqref{eq: variance error probabilistic bound} with probability at least $1-n^{-t}$, for any $t>t_0$. 
\end{theorem}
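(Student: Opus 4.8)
The plan is to derive the asymptotics of $\hat{q}_i$ directly from the pointwise expansion of $W$ in Theorem~\ref{thm:scaling factors asym expression}. Starting from~\eqref{eq: W_ij variance error} and recalling that $W_{i,i}=0$, I raise each entry to the power $s$ and use the elementary identity $[\mathcal{K}_{\epsilon}(x_i,x_j)]^s = \mathcal{K}_{\epsilon/s}(x_i,x_j)$, i.e. powering the Gaussian kernel merely rescales its bandwidth. This gives
\[
\sum_{j=1}^n [W_{i,j}]^s = \frac{[\rho_{\epsilon}(x_i)]^s}{[(n-1)(\pi\epsilon)^{d/2}]^s}\sum_{j\neq i}\mathcal{K}_{\epsilon/s}(x_i,x_j)\,[\rho_{\epsilon}(x_j)]^s\,(1+\mathcal{E}_{i,j})^s .
\]
Since $\max_{i,j}|\mathcal{E}_{i,j}|$ is bounded by the right-hand side of~\eqref{eq: variance error probabilistic bound} and hence tends to $0$, for fixed $s$ we have $(1+\mathcal{E}_{i,j})^s = 1+O(\mathcal{E}_{i,j})$ uniformly; as all summands are nonnegative, the error factors pull out as a single multiplicative $1+O(\max_{i,j}|\mathcal{E}_{i,j}|)$. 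It therefore suffices to analyze the clean weighted kernel sum $S_i := \frac{1}{n-1}\sum_{j\neq i}\mathcal{K}_{\epsilon/s}(x_i,x_j)[\rho_{\epsilon}(x_j)]^s$.

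Next I would show that $S_i$ concentrates, uniformly in $i$, around its population integral $I_i := \int_{\mathcal{M}} \mathcal{K}_{\epsilon/s}(x_i,y)[\rho_{\epsilon}(y)]^s q(y)\,d\mu(y)$. Conditioning on $x_i$, $S_i$ is an average of $n-1$ i.i.d. nonnegative summands with mean $I_i$, whose variance is bounded by $\int_{\mathcal{M}}\mathcal{K}_{\epsilon/s}(x_i,y)[\rho_{\epsilon}(y)]^{2s}q(y)\,d\mu(y)$; this is finite because the $c/t^2$ tail bound on $\rho_{\epsilon}$ (Lemma~\ref{lem: boundedness of rho}) controls the moments of $[\rho_{\epsilon}]^{2s}$. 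A Bernstein inequality together with a union bound over $i=1,\dots,n$ then yields $S_i = I_i\,(1+O(\sqrt{\log n/n}))$ with probability at least $1-n^{-t}$, where the $\sqrt{\log n/n}$ term matches the last entry in~\eqref{eq: variance error probabilistic bound} and all $\epsilon$-dependence is absorbed into $C^{'}(\epsilon)$. To evaluate $I_i$, I apply the Diffusion Maps expansion (Lemma~\ref{lem: diffusion maps lemma}) with bandwidth $\epsilon/s$ to the integrand $[\rho_{\epsilon}(\cdot)]^s q(\cdot)$, giving $I_i = (\pi\epsilon/s)^{d/2}[\rho_{\epsilon}(x_i)]^s q(x_i)\,(1+O(\epsilon))$.

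I then substitute back and collect the powers of $n-1$, $\pi\epsilon$, and $s$. Writing $\sum_j[W_{i,j}]^s = [\rho_{\epsilon}(x_i)]^s(n-1)^{1-s}(\pi\epsilon)^{-sd/2}\,S_i$ times the pooled error factor, and inserting $I_i$, produces $(n-1)^{1-s}(\pi\epsilon)^{(1-s)d/2}s^{-d/2}[\rho_{\epsilon}(x_i)]^{2s}q(x_i)$ times $1+O(\epsilon)+O(\max_{i,j}|\mathcal{E}_{i,j}|)$. Raising to the power $1/(1-s)$ and dividing by $n-1$ as in~\eqref{eq:density estimator formula} cancels the $n$ dependence and turns $s^{-d/2}$ into $s^{d/(2(s-1))}$, leaving $\hat{q}_i = (\pi\epsilon)^{d/2}s^{d/(2(s-1))}[\rho_{\epsilon}(x_i)]^{2s/(1-s)}q(x_i)^{1/(1-s)}$ times $(1+O(\epsilon)+O(\max_{i,j}|\mathcal{E}_{i,j}|))^{1/(1-s)}$. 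Finally, I invoke Assumption~\ref{assump: pointwise convergence} to replace $\rho_{\epsilon}(x_i)$ by $q^{-1/2}(x_i)F_{\epsilon}(x_i)$ with $F_{\epsilon}=1+O(\epsilon)$ from~\eqref{eq: F_eps def}; then $[\rho_{\epsilon}(x_i)]^{2s/(1-s)}q(x_i)^{1/(1-s)} = q^{-s/(1-s)}(x_i)\,q^{1/(1-s)}(x_i)\,(1+O(\epsilon)) = q(x_i)\,(1+O(\epsilon))$, which is exactly~\eqref{eq: robust density estimator bias and variance errors}.

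The main obstacle is the uniform concentration of the power-$s$ sum $S_i$: unlike $W$ itself, whose sample-to-population behavior is already packaged in Theorem~\ref{thm:scaling factors asym expression}, the summand $\mathcal{K}_{\epsilon/s}(x_i,x_j)[\rho_{\epsilon}(x_j)]^s$ carries the possibly unbounded weight $[\rho_{\epsilon}(x_j)]^s$, so controlling its variance and higher moments (needed for Bernstein) hinges essentially on the tail bound for $\rho_{\epsilon}$ from Lemma~\ref{lem: boundedness of rho}. A secondary subtlety is bookkeeping: the several multiplicative error sources -- the relative error $\mathcal{E}_{i,j}$ inherited from $W$, the $O(\epsilon)$ Diffusion Maps bias, the Monte Carlo deviation, and the $O(\epsilon^{1+\beta})$ error from Assumption~\ref{assump: pointwise convergence} -- must be shown to combine into a single relative error that survives being raised to the fixed power $1/(1-s)$, remains uniform over $i$, and is dominated by the right-hand side of~\eqref{eq: variance error probabilistic bound}.
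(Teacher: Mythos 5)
Your overall route is the same as the paper's: raise the expansion of $W_{i,j}$ from Theorem~\ref{thm:scaling factors asym expression} to the power $s$, use $[\mathcal{K}_{\epsilon}]^s=\mathcal{K}_{\epsilon/s}$, pull the relative errors out as a single multiplicative factor, replace the sample sum by a population integral, expand that integral for small $\epsilon$, and finish with Assumption~\ref{assump: pointwise convergence} and the exponent algebra (which you carry out correctly). However, two of your justifications fail as written. The first is the concentration step: you invoke Bernstein's inequality for $S_i$, controlling the variance and higher moments of $[\rho_{\epsilon}(x_j)]^s$ through the tail bound of Lemma~\ref{lem: boundedness of rho}. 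That lemma only gives $\vert\{\rho_{\epsilon}>t\}\vert\leq C/t^2$, which controls $\int_{\mathcal{M}}[\rho_{\epsilon}]^p\,d\mu$ only for $p<2$. Hence your variance proxy $\int_{\mathcal{M}}\mathcal{K}_{\epsilon/s}(x_i,y)[\rho_{\epsilon}(y)]^{2s}q(y)\,d\mu(y)$ is uncontrolled whenever $s\geq 1$ (including $s=2$, the value used in the paper's experiments), and the full moment hierarchy that Bernstein requires is uncontrolled for every $s>0$; with only low moments you would get a Chebyshev-type bound, not the $1-n^{-t}$ statement. The fix is the observation the paper makes at the start of its appendix: $\rho_{\epsilon}$ is positive and continuous on the compact $\mathcal{M}$, hence bounded above and below by constants that may depend on $\epsilon$ --- harmless, since the constant $C'(\epsilon)$ in~\eqref{eq: variance error probabilistic bound} is allowed to depend on $\epsilon$. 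With bounded summands, Hoeffding's inequality (conditionally on $x_i$) plus a union bound gives the $\sqrt{\log n/n}$ term directly; Lemma~\ref{lem: boundedness of rho} is needed for Theorem~\ref{thm: scaling function convergence}, not here.

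The second gap is the bias step: you apply Lemma~\ref{lem: diffusion maps lemma} to the integrand $[\rho_{\epsilon}(\cdot)]^s q(\cdot)$. That lemma requires $f\in\mathcal{C}^3(\mathcal{M})$, its first-order term involves $\Delta_{\mathcal{M}}\{f\}$, and its $\mathcal{O}(\epsilon^2)$ constant depends on $f$; but $\rho_{\epsilon}$ is $\epsilon$-dependent and nothing in the paper controls its derivatives (or even establishes its differentiability), so the claimed uniform $(1+\mathcal{O}(\epsilon))$ error for $I_i$ is not justified. The repair is to reverse your last two steps, which is exactly what the paper does: first use Assumption~\ref{assump: pointwise convergence} to replace $\rho_{\epsilon}(y)$ by $q^{-1/2}(y)F_{\epsilon}(y)\left(1+\mathcal{O}(\epsilon^{1+\beta})\right)$ inside the integral (legitimate because $q^{-1/2}F_{\epsilon}$ is bounded away from zero for small $\epsilon$), and only then apply Lemma~\ref{lem: diffusion maps lemma} to the smooth, $\epsilon$-independent function $q^{1-s/2}$, whose regularity is guaranteed by Assumption~\ref{assump: smoothness}. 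With these two repairs your computation coincides with the paper's proof.
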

We note that the quantities $\epsilon_0,t_0,m_0(\epsilon),n_0(\epsilon), C^{'}({\epsilon})$ appearing in Theorem~\ref{thm: density estimator} need not be the same as those in Theorem~\ref{thm:scaling factors asym expression}, and may additionally depend on $s$ and $\beta$, which are considered as fixed constants independent of $m$, $n$, $E$, and $\epsilon$. The proof of Theorem~\ref{thm: density estimator} can be found in Appendix~\ref{appendix: proof of robust density estimator}. 

Theorem~\ref{thm: density estimator} establishes that up to the constant factor $( {\pi \epsilon} )^{d/2}  s^{{d}/{(2(s-1))}}$, the DS-KDE $\hat{q}_i$ approximates the density $q(x_i)$ for all $i=1,\ldots,n$ with a bias error of $\mathcal{O}(\epsilon)$ and a variance error $\mathcal{E}^{(1)}_{i}$ that has the same behavior as $\mathcal{E}_{i,j}$ in~\eqref{eq: variance error probabilistic bound}. In particular, for sufficiently small $\epsilon$ and sufficiently large $m$ and $n$ (which depend also on $\epsilon$), the quantity $\hat{q}_i$ can approximate $( {\pi \epsilon} )^{d/2}  s^{{d}/{(2(s-1))}} q(x_i)$ with high probability up to an arbitrarily small relative error. Therefore, $\hat{q}_i$ can serve as a density estimator that is robust to the high-dimensional noise in our setup. 

We now demonstrate the advantage of the DS-KDE over the standard KDE via a toy example. We simulated $n=2000$ points from the unit circle in $\mathbb{R}^2$ and embedded them in $\mathbb{R}^m$ with $m=2000$ by applying a random orthogonal transformation. The angle of each clean point $x_i$, denoted by $\theta_i \in [0,2\pi)$, was sampled from $\mathcal{N}(0,0.16\pi^2)$ modulo $2\pi$, where $\mathcal{N}(\mu,\sigma^2)$ is the standard univariate normal distribution. The resulting sampling density $q(x)$ on the circle can be seen in Figure~\ref{fig: density est clean data}. We also depict the outputs of the standard KDE and the DS-KDE with $s=2$ and $\epsilon = 0.1$. It is evident that without noise, both estimators provide similarly accurate estimates of $q(x_i)$, noting that we normalized the standard KDE by $(\pi\epsilon)^{d/2}$ and the DS-KDE by  $({\pi \epsilon} )^{d/2}  s^{{d}/{(2(s-1))}}$.

\begin{figure} 
  \centering
  	{
  	\subfloat[Clean data] 
  	{
    \includegraphics[width=0.29\textwidth]{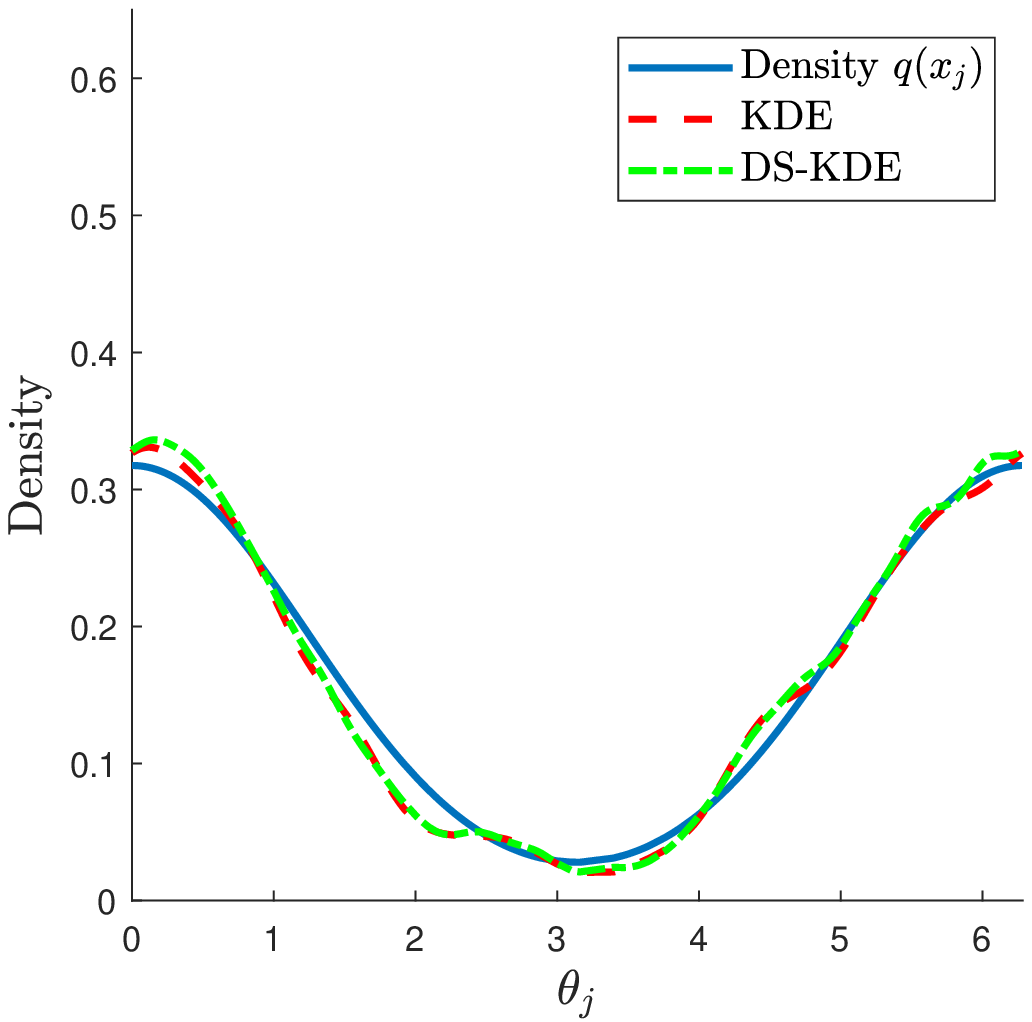}  \label{fig: density est clean data}
    } 
    \hspace{15pt}
    \subfloat[Varying noise]  
  	{
    \includegraphics[width=0.29\textwidth]{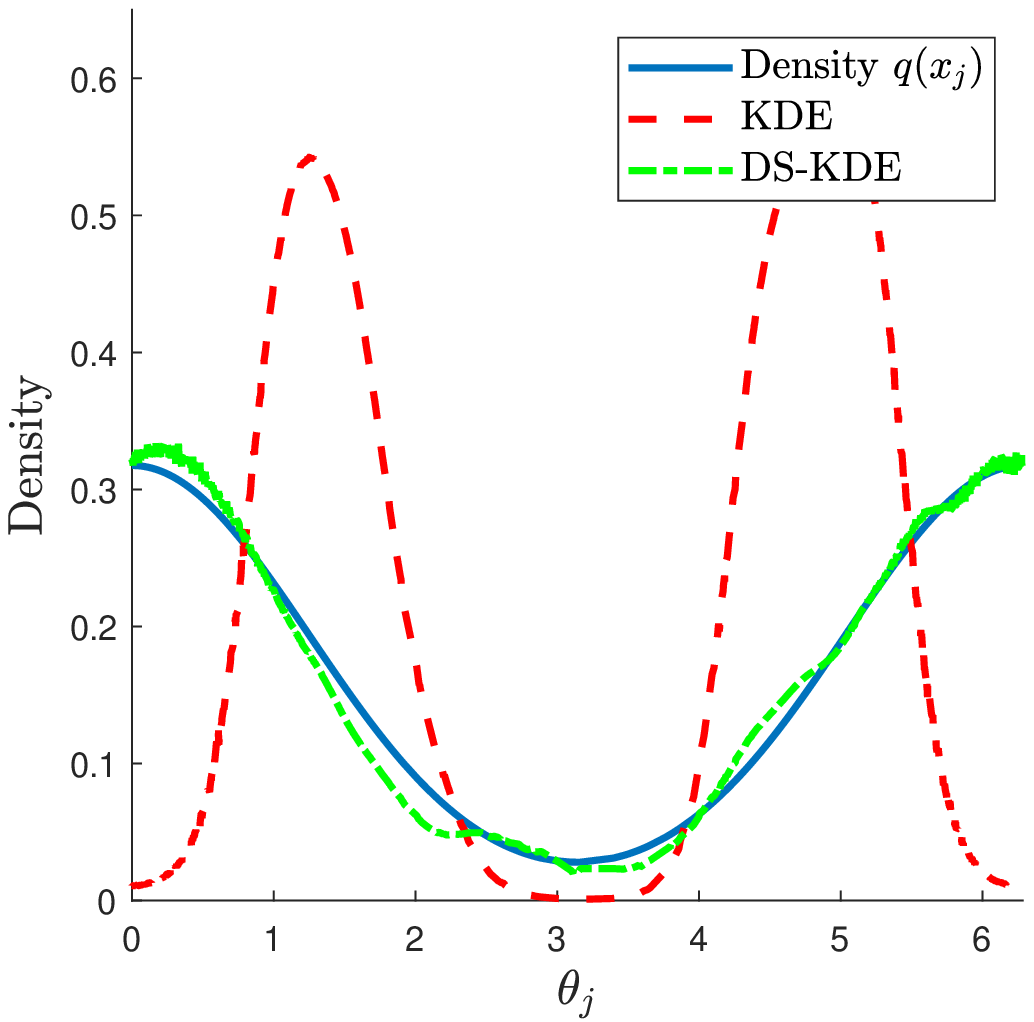} \label{fig: density est example varying noise}
    } 
    \hspace{15pt}
    \subfloat[Outlier-type noise]  
  	{
    \includegraphics[width=0.29\textwidth]{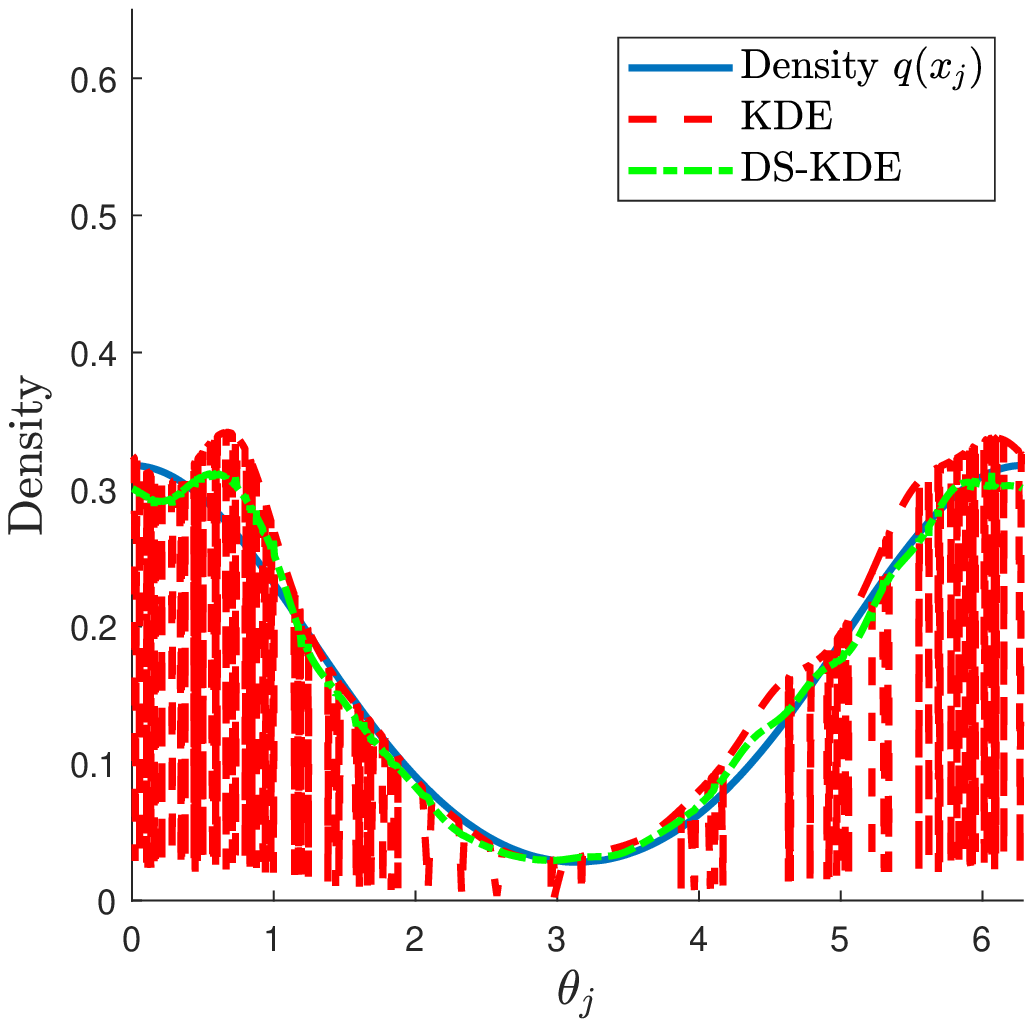} \label{fig: density est example outlier noise}
    }
    }
    \caption
    {Example of density estimation on the unit circle using DS-KDE from~\eqref{eq:density estimator formula} versus the standard KDE in clean and noisy scenarios, where $n=m=2000$, $s=2$, and $\epsilon=0.1$. The angles $\theta_i\in [0,2\pi)$ were sampled from $\mathcal{N}(0,0.16\pi^2)$ modulo $2\pi$. \textbf{Panel (a)}: Clean data.  \textbf{Panel (b)}: Heteroskedastic noise with smoothly varying magnitude; see Figure~\ref{fig: circle with varying noise}. \textbf{panel (c)}: Identically-distributed outlier type noise; see Figure~\ref{fig: circle with outlier noise}. 
    } \label{fig: density estimation}
    \end{figure} 
    
Next, we simulated two types of high-dimensional noise. First, we added noise $\eta_i$ sampled uniformly from a ball in $\mathbb{R}^m$ with radius $0.01 + 0.49(1+\cos (\theta_i))/2$, where $\theta_i$ is the angle of $x_i$ on the unit circle. Hence, the expected noise magnitude varies smoothly between $0.01$ and $0.5$ along the circle; see Figure~\ref{fig: circle with varying noise} for a two-dimensional visualization. Figure~\ref{fig: density est example varying noise} depicts the standard KDE as well as our robust density estimator $\hat{q}_i$ versus the true density $q(x_i)$. We observe that the standard KDE produces an estimate that is very different from the true density $q(x_i)$, and has more to do with the noise magnitudes $\Vert \eta_i \Vert_2^2$ in the data. On the other hand, the DS-KDE is robust to the magnitudes of the noise and produces an estimate that is nearly as accurate as in the clean case. For the second type of noise, we took each $\eta_i$ to be the zero vector with probability $p = 0.9$ and sampled it from a multivariate normal with covariance $I_m/(4m)$ with probability $1-p = 0.1$, thereby simulating identically-distributed outlier-type noise; see Figure~\ref{fig: circle with outlier noise} for a two-dimensional visualization. Figure~\ref{fig: density est example outlier noise} shows that in this case, the standard KDE suffers from pointwise drops in the estimated density. Essentially, these drops stem from the nonzero realizations of the noise, i.e., the ```outliers'', whose large noise magnitudes inflate the pairwise Euclidean distances. On the other hand, the DS-KDE produces an estimate that is invariant to the outliers and is very close to $q(x_i)$.

    \begin{figure} 
  \centering
  	{
  	\subfloat[Noise with smoothly varying magnitude]  
  	{
    \includegraphics[width=0.33\textwidth]{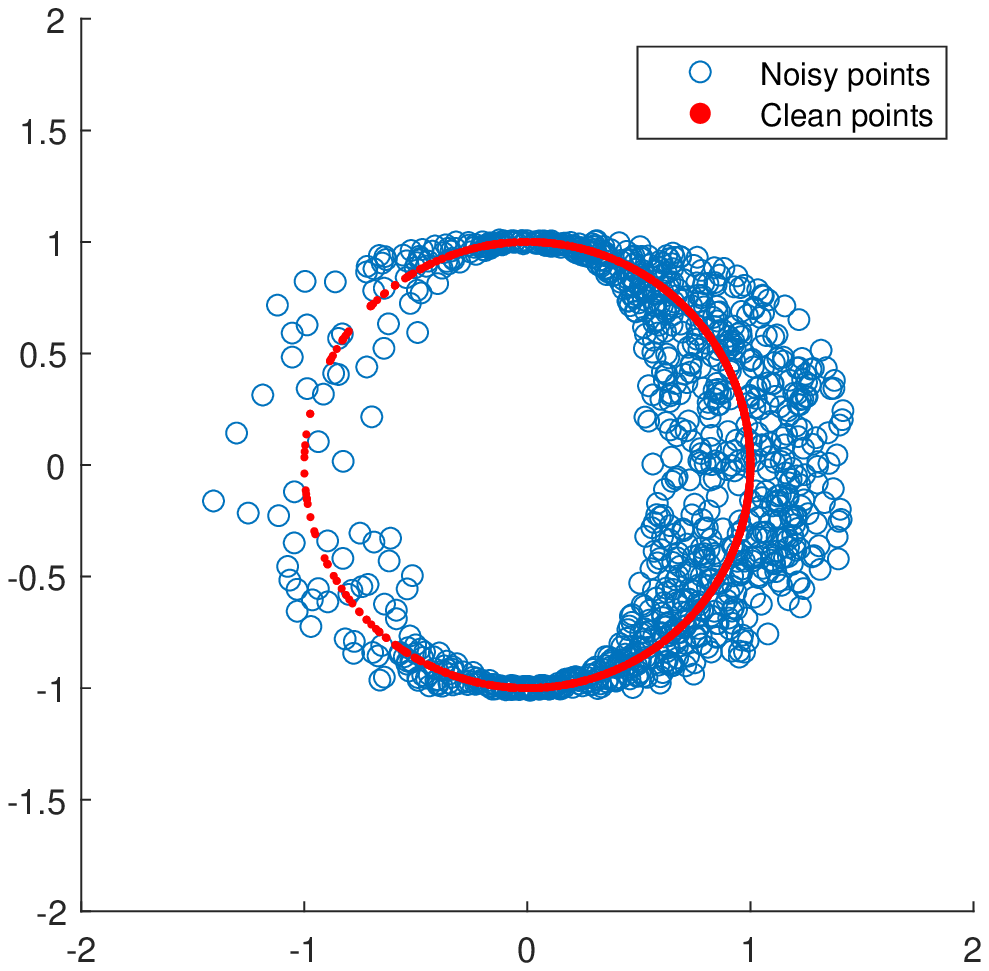} \label{fig: circle with varying noise}
    }
    \hspace{40pt}
    \subfloat[Outlier type noise]  
  	{
    \includegraphics[width=0.33\textwidth]{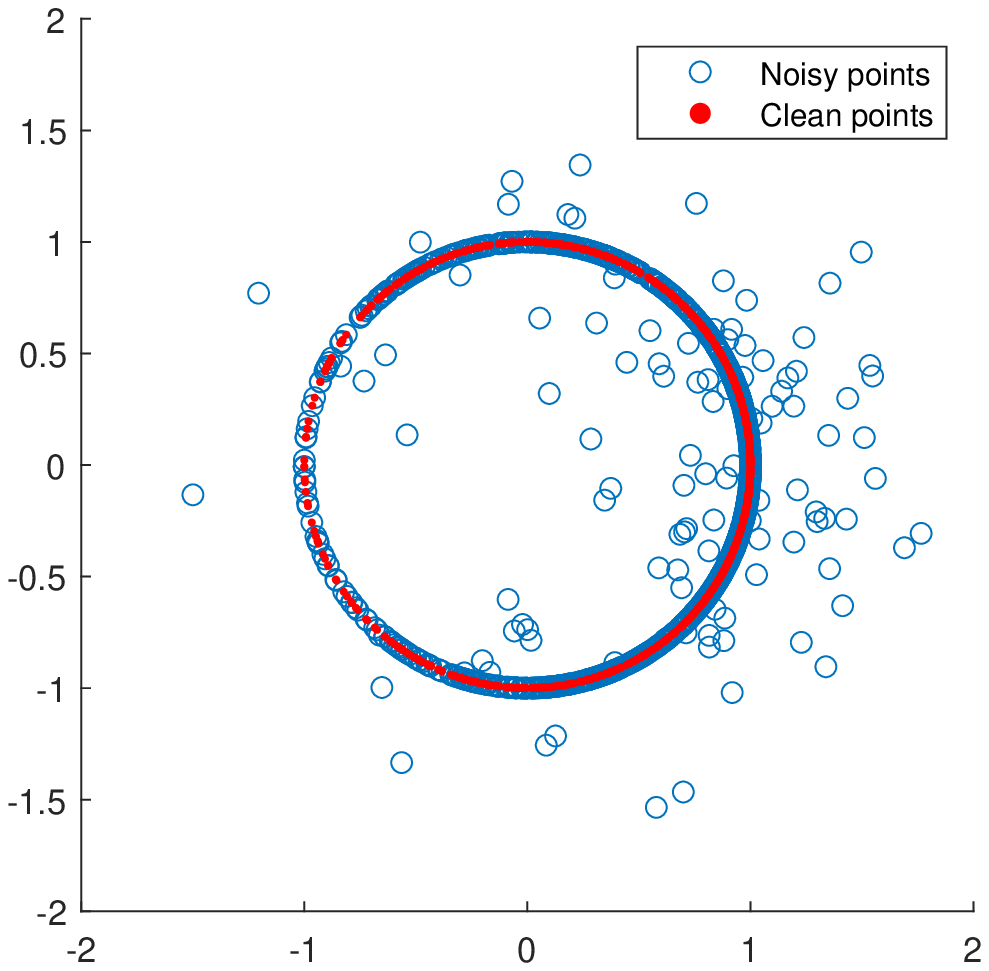} \label{fig: circle with outlier noise}
    }
    }
    \caption
    {Two-dimensional visualization of prototypical noise models used in our experiments. The clean data $x_i$ are sampled from the unit circle with non-uniform density as in Figure~\ref{fig: density est clean data}. \textbf{Panel (a)}: Heteroskedastic noise with smoothly varying magnitude, where $\eta_i$ is sampled uniformly from a ball in $\mathbb{R}^m$ with radius $0.01 + 0.49(1+\cos (\theta_i))/2$ ($\theta_i$ is the angle of $x_i$ on the unit circle). \textbf{Panel (b)}: Identically-distributed outlier-type noise, where $\eta_i$ is zero with probability $0.9$ and sampled from a multivariate normal with covariance $I_m/(4m)$ with probability $0.1$. 
    } \label{fig: prototypical noise models}
    \end{figure} 

It is interesting to point out that although the DS-KDE is undefined when $s=1$, the limiting case of $s\rightarrow 1$ is interpretable and can be implemented. In particular, according to~\eqref{eq:density estimator formula}, a direct calculation shows that
\begin{equation}
    \lim_{s\rightarrow 1} \hat{q}_i = \frac{1}{n-1}\operatorname{exp}\left\{ -\sum_{j=1,\; j\neq i}^n W_{i,j} \log(W_{i,j}) \right\}. \label{eq: density estimator s tends to 1}
\end{equation}
The right-hand side of~\eqref{eq: density estimator s tends to 1}, up to the factor $1/(n-1)$, is known as the {perplexity} of the $i$th row of $W$, where the expression inside the exponent in~\eqref{eq: density estimator s tends to 1} is the entropy. According to Theorem~\ref{thm: density estimator}, we expect the right-hand side of~\eqref{eq: density estimator s tends to 1} to approximate $( {\pi \epsilon} )^{d/2}  s^{{d}/{(2(s-1))}} q(x_i) \rightarrow (\pi e \epsilon)^{d/2} q(x_i)$ as $s\rightarrow 1$, which provides an explicit relation between the entropy of each row of the doubly stochastic kernel $W$ and the sampled density $q(x_i)$. 
We mention that Theorem~\ref{thm: density estimator} does not strictly cover the limit $s\rightarrow 1$ since the dependence of the bias and variance errors on $s$ is harder to track and is not made explicit. However, the numerical experiments described below suggest that the conclusions of Theorem~\ref{thm: density estimator} also hold for $s\rightarrow 1$, and that the performance of the density estimator in this case is comparable to other choices of $s$ over a range of bandwidth parameters $\epsilon$.

Figure~\ref{fig: density estimation error vs n} illustrates the maximal density estimation errors (over $i=1,\ldots,n$) for the standard KDE as well as the DS-KDE as functions of $n$, for $m=n$ and $s = 0.5$, $s=2$, and $s\rightarrow 1$, where $\epsilon= 0.1$. We used the same noise settings as for Figure~\ref{fig: prototypical noise models}, and the displayed errors were averaged over $50$ randomized trials. In the clean case, the KDE and the DS-KDE perform similarly, where all errors decrease with $n$ at a rate close to $n^{-1/2}$, which agrees with Theorem~\ref{thm: density estimator} and~\ref{eq: variance error probabilistic bound} up to a logarithmic factor. In both noisy cases, however, the KDE error saturates at a high level and does not decrease further, whereas the DS-KDE errors decrease roughly at the same rate as in the clean case. In particular, the DS-KDE errors for $n = 3000$ are over an order of magnitude smaller than the standard KDE error. 

\begin{figure} 
  \centering
  	{
  	\subfloat[Clean data]  
  	{
    \includegraphics[width=0.29\textwidth]{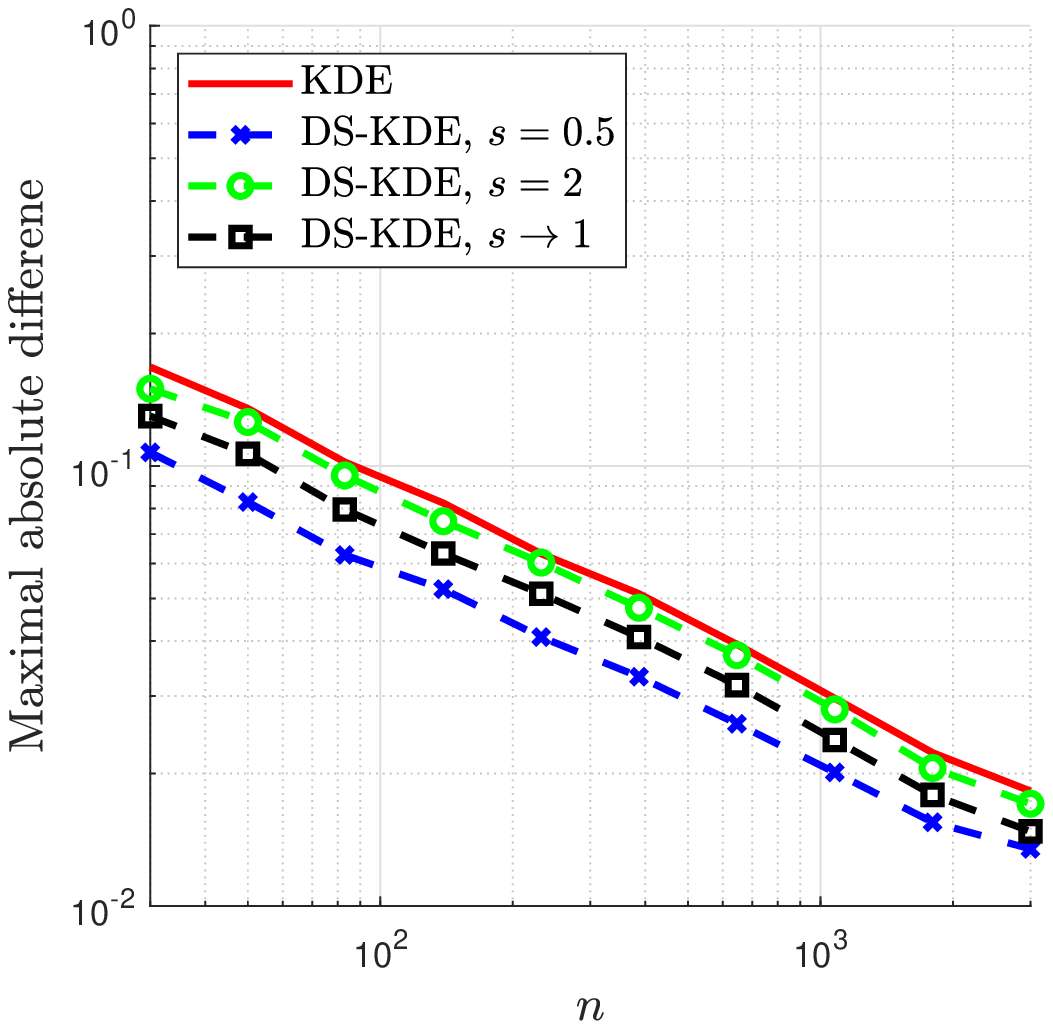} \label{fig: density est vs n clean}
    }
    \hspace{15pt}
    \subfloat[Varying noise]  
  	{
    \includegraphics[width=0.29\textwidth]{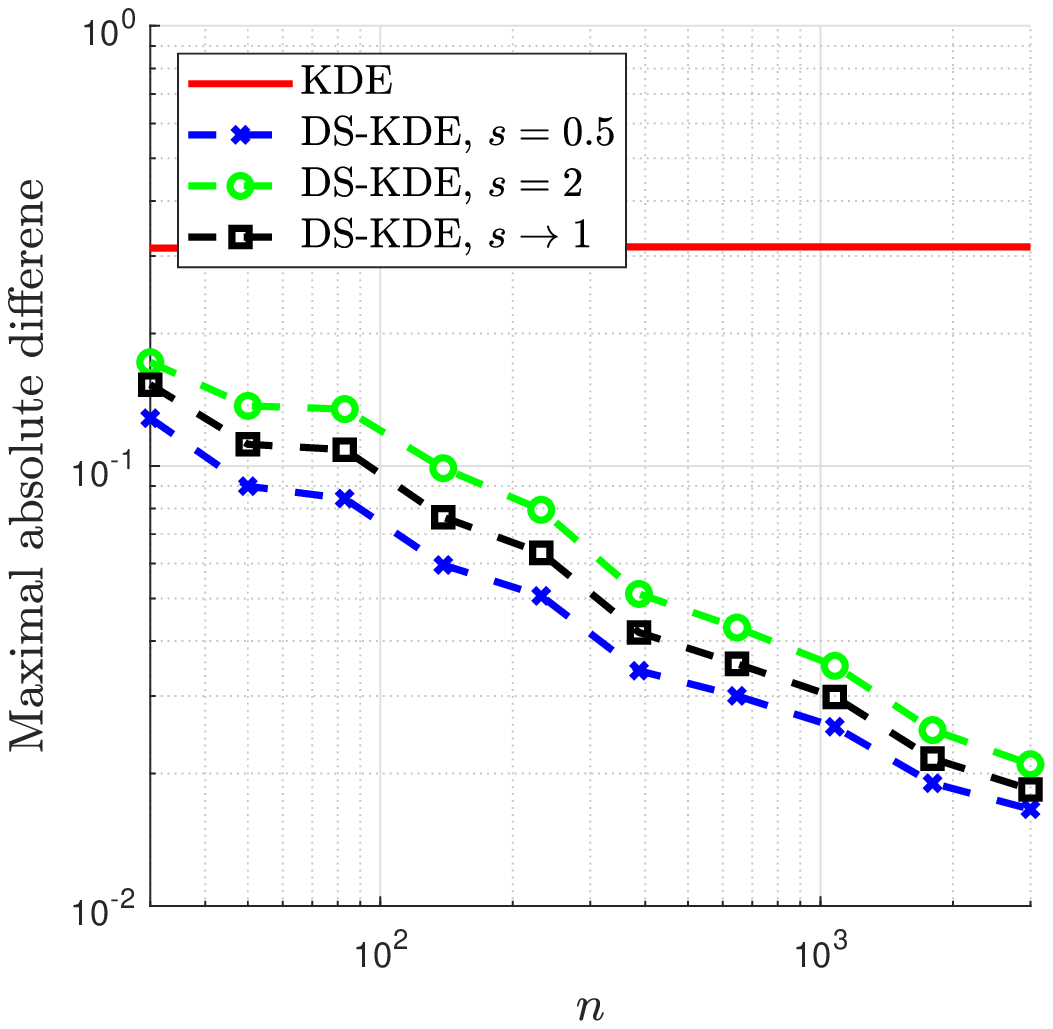} \label{fig: density est vs n varying noise}
    }
    \hspace{15pt}
    \subfloat[Outlier-type noise]  
  	{
    \includegraphics[width=0.29\textwidth]{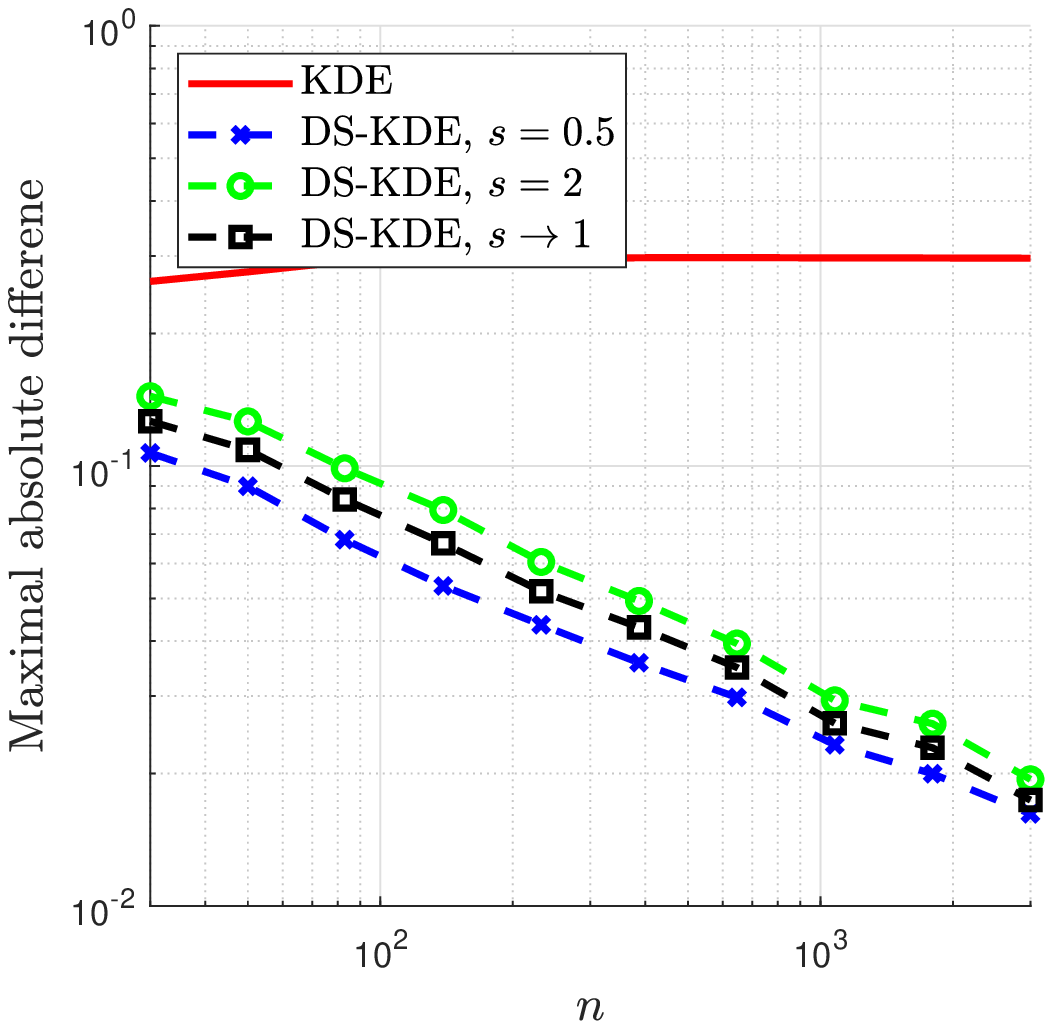} \label{fig: density est vs n outlier noise}
    }
    }
    \caption
    {Density estimation errors for the standard KDE and the DS-KDE from~\eqref{eq:density estimator formula} (normalized by the corresponding global constant) versus the number of samples $n$ and several values of $s$ for clean and noisy scenarios, where $m=n$. The density $q(x)$ and noise models are the same as for figures~\ref{fig: density est clean data},~\ref{fig: density est example varying noise}, and~\ref{fig: density est example outlier noise}, respectively; see also figures~\ref{fig: circle with varying noise} and~\ref{fig: circle with outlier noise}.
    } \label{fig: density estimation error vs n}
    \end{figure} 
    
In Figure~\ref{fig: density estimation error vs epsilon}, we depict the maximal density estimation errors for the standard KDE and DS-KDE~\eqref{eq:density estimator formula} as functions of $\epsilon$ for $s = 0.5$, $s=2$, and $s\rightarrow 1$, where $m=n=2000$. We used the same noise settings as for Figure~\ref{fig: prototypical noise models}, and the displayed errors were averaged over $10$ randomized trials.
We observe that in the clean case, all density estimators perform similarly well, attaining errors of about $0.02$ for the best values of $\epsilon$, with a small advantage to the DS-KDE with $s=2$. Yet, in the noisy scenarios, the standard KDE can only achieve an error of about $0.1$, which requires using a large bandwidth parameter, while the DS-KDE behaves similarly to the clean case and achieves significantly smaller errors. As expected from~Theorem~\ref{thm: density estimator}, we see the prototypical bias-variance trade-off in all noise scenarios, where the error of the DS-KDE is dominated by the bias term $\mathcal{O}(\epsilon)$ for large $\epsilon$, and dominated by the variance error $\max_i \vert \mathcal{E}_i^{(1)} \vert$ for small $\epsilon$. However, while the strong noise forces the standard KDE to use a large bandwidth $\epsilon$ (proportional to the magnitude of the noise) to achieve the smallest error in the bias-variance trade-off, the DS-KDE does not suffer from this issue and achieves small errors even when the bandwidth is much smaller than the noise magnitudes.

    \begin{figure} 
  \centering
  	{
  	\subfloat[Clean data]  
  	{
    \includegraphics[width=0.29\textwidth]{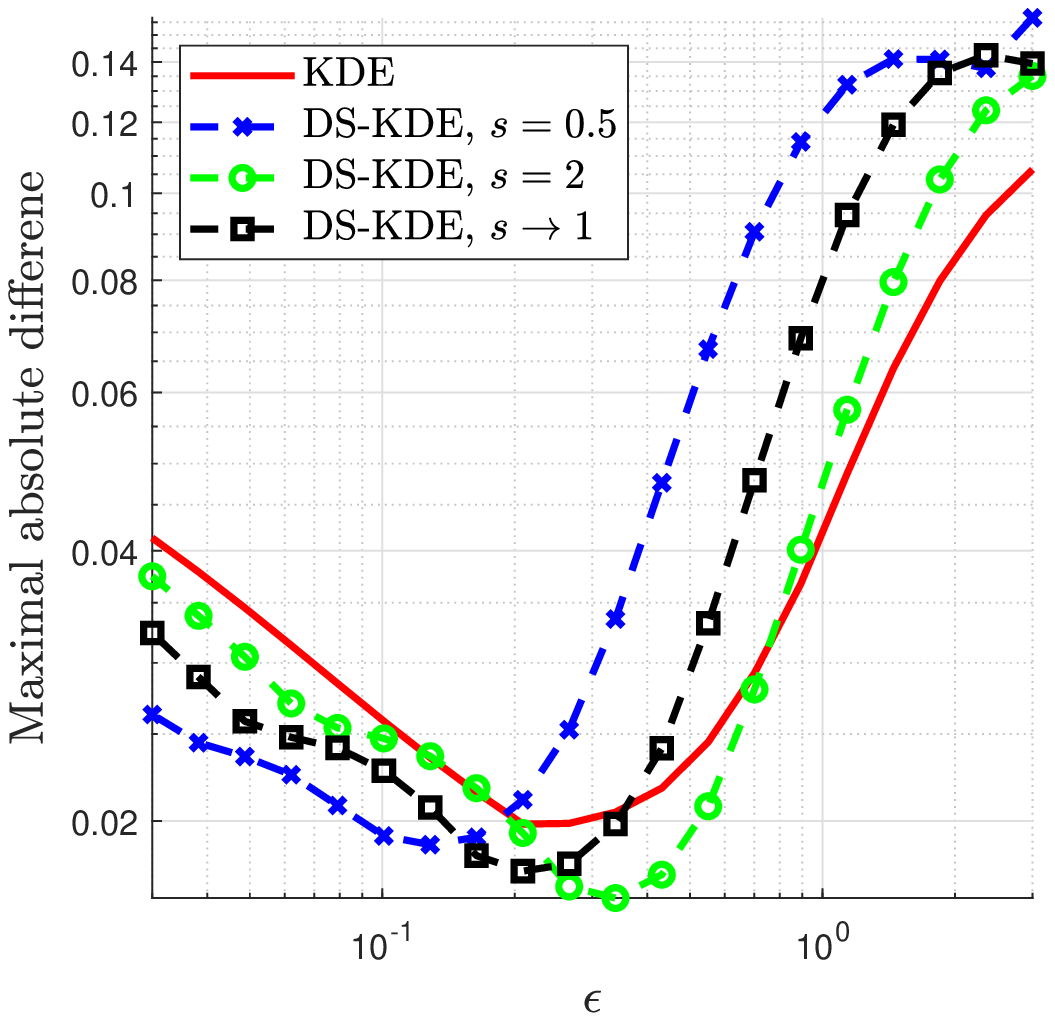} \label{fig: density est vs eps clean}
    }
    \hspace{15pt}
    \subfloat[Varying noise]  
  	{
    \includegraphics[width=0.29\textwidth]{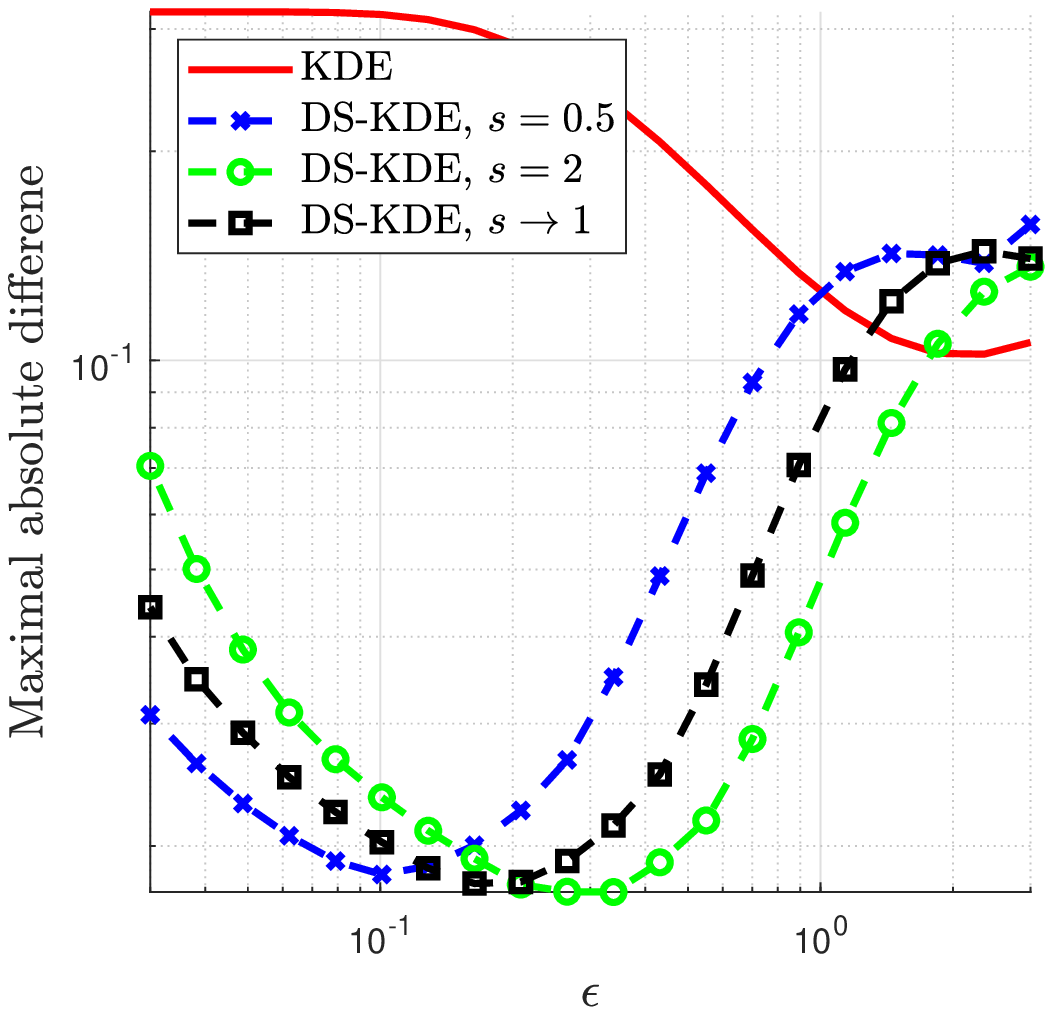} \label{fig: density est vs eps varying noise}
    }
    \hspace{15pt}
    \subfloat[Outlier-type noise]  
  	{
    \includegraphics[width=0.29\textwidth]{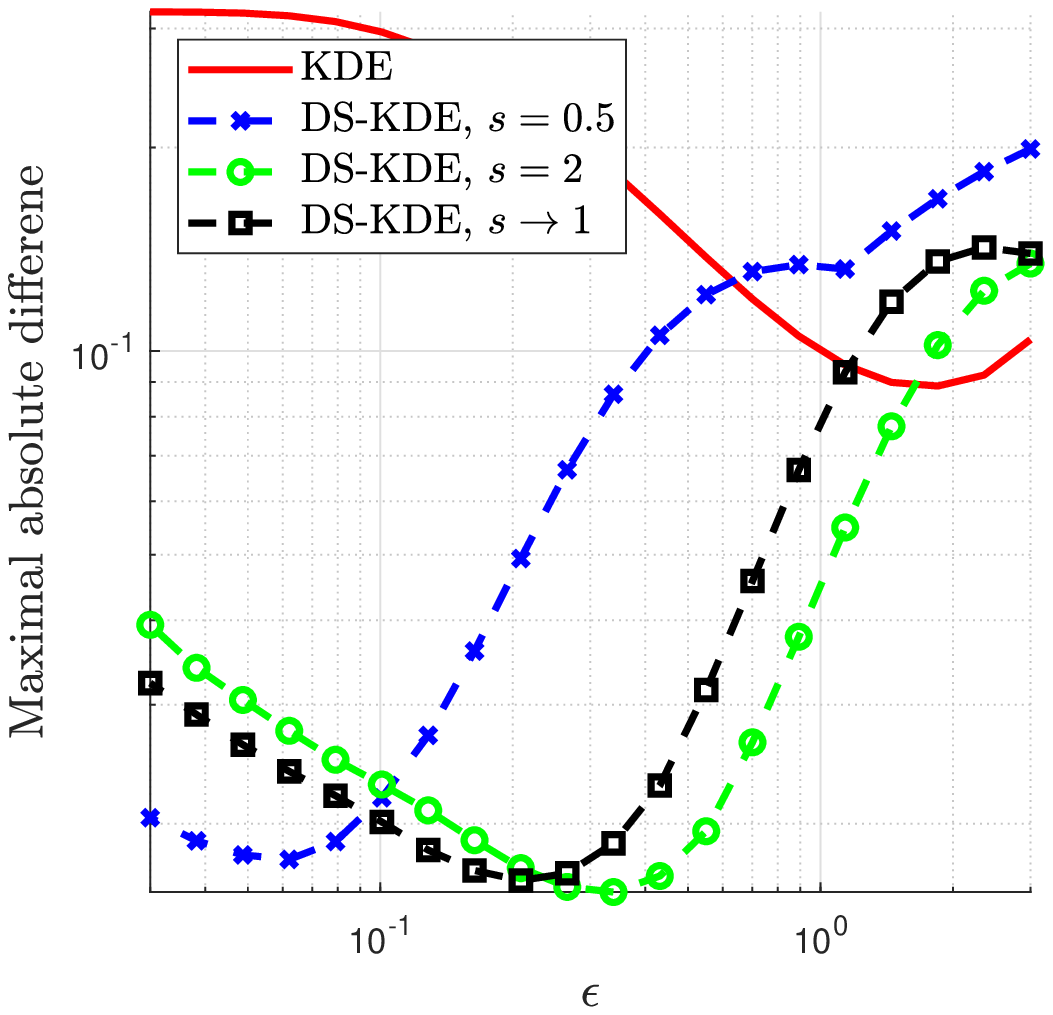} \label{fig: density est vs eps outlier noise}
    }
    }
    \caption
    {Density estimation errors for the standard KDE and the DS-KDE from~\eqref{eq:density estimator formula} (normalized by the corresponding global constant) versus the bandwidth parameter $\epsilon$ and several values of $s$ for clean and noisy scenarios, where $m=n=2000$. The density $q(x)$ and noise models are the same as for figures~\ref{fig: density est clean data},~\ref{fig: density est example varying noise}, and~\ref{fig: density est example outlier noise}, respectively; see also figures~\ref{fig: circle with varying noise} and~\ref{fig: circle with outlier noise}. 
    } \label{fig: density estimation error vs epsilon}
    \end{figure} 

\subsection{Recovering noise magnitudes, signal magnitudes, and Euclidean distances} \label{sec: recovering noise magnitudes}
According to the asymptotic expression for the scaling factors $d_i$ in~\eqref{eq: W_ij and d_i limiting form}, we can extract the noise magnitudes $\Vert \eta_i \Vert_2^2$ from $d_i$ (up to a global constant) if we know the density $q(x_i)$. Since we do not have access to $q(x_i)$ directly, we replace it with its estimate $\hat{q}_i$ from~\eqref{eq:density estimator formula} and define
\begin{align}
    \hat{N}_i = \epsilon \log \left( d_i \sqrt{(n-1)\hat{q}_i} \right),  \label{eq: noise magnitude estimator}
\end{align}
for $i=1,\ldots,n$, which serves as an estimator for the noise magnitude $\Vert \eta_i \Vert_2^2$. 
In our setup of high-dimensional noise (Assumptions~\ref{assump: noise magnitude} and~\ref{assump: dimensions}), we have $\Vert y_i \Vert_2^2 \approx \Vert {x}_i \Vert_2^2 + \Vert \eta_i \Vert_2^2$ and $\Vert y_i - y_j \Vert_2^2 \approx \Vert x_i - x_j \Vert_2^2 + \Vert \eta_i \Vert_2^2 + \Vert \eta_j \Vert_2^2$; see Lemma~\ref{lem:noise scalar product concentration} and the proof of Theorem~\ref{thm:scaling factors asym expression}. Hence, we can infer the signal magnitudes  $\Vert x_i \Vert_2^2$ and the pairwise Euclidean distances $\Vert x_i - x_j \Vert_2^2$ according to
\begin{equation}
    \hat{S}_i = \Vert y_i\Vert_2^2 - \hat{N}_i, \qquad\qquad
    \hat{D}_{i,j} = \Vert y_i - y_j \Vert_2^2 - \hat{N}_i - \hat{N}_j, \label{eq: signal magnitude estimate and distance correction formulas}
\end{equation}
respectively, for $i,j=1,\ldots,n$ with $j\neq i$. Equivalently, $\hat{D}_{i,j}$ from~\eqref{eq: signal magnitude estimate and distance correction formulas} can be derived directly from $W_{i,j}$ by canceling-out the term ${(q(x_i) q(x_j))^{-1/2}}$ appearing in~\eqref{eq: W_ij and d_i limiting form} via the density estimator $\hat{q}_i$, that is,  
\begin{equation}
    \hat{D}_{i,j} = -\epsilon \log \left\{(n-1) \sqrt{\hat{q}_i} W_{i,j} \sqrt{\hat{q}_j} \right\}.
\end{equation}
Therefore, the corrected distances $\hat{D}_{i,j}$ correspond to the similarities measured by the affinity matrix $\sqrt{\hat{q}_i} W_{i,j} \sqrt{\hat{q}_j}$, which approximates the clean Gaussian kernel (up to a global constant) according to Theorem~\ref{thm: density estimator} and the results in Section~\ref{sec: main results}.

We now have the following result, whose proof can be found in Appendix~\ref{appenidx: proof of noise magnitude estimation}. 
\begin{proposition} \label{prop: noise magnitude est}
Under Assumptions~\ref{assump: manifold}--\ref{assump: pointwise convergence}, there exist $\epsilon_0,t_0,m_0(\epsilon),n_0(\epsilon), C^{'}({\epsilon})>0$, such that for all $\epsilon < \epsilon_0$,  $m>m_0(\epsilon)$, $n>n_0(\epsilon)$, we have
\begin{align}
    \hat{N}_i &= \Vert \eta_i \Vert_2^2 + \epsilon\frac{d \log(s)}{4(s-1)} + \mathcal{O}(\epsilon^{2}) + \mathcal{E}^{(2)}_{i}, \label{eq: noise estimator bias and variance error} \\
    \hat{S}_i &= \Vert x_i \Vert_2^2 - \epsilon\frac{d \log(s)}{4(s-1)} + \mathcal{O}(\epsilon^{2}) + \mathcal{E}^{(3)}_{i}, \label{eq: signal estimator bias and variance error} \\
    \hat{D}_{i,j} &= \Vert x_i - x_j \Vert_2^2 - \epsilon \frac{d \log(s)}{2(s-1)} + \mathcal{O}(\epsilon^{2}) + \mathcal{E}^{(4)}_{i,j}, \label{eq: Euclidean distance correction}
\end{align}
for all $i,j=1,\ldots,n$, $i\neq j$, where $\max_i \vert \mathcal{E}^{(2)}_i \vert$, $\max_i \vert \mathcal{E}^{(3)}_i \vert$, and $\max_{i,j} \vert \mathcal{E}^{(4)}_{i,j} \vert$ are upper bounded by the right-hand side of~\eqref{eq: variance error probabilistic bound} with probability at least $1-n^{-t}$, for any $t>t_0$.
\end{proposition}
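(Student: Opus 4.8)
The plan is to first establish the formula~\eqref{eq: noise estimator bias and variance error} for $\hat{N}_i$, and then to obtain~\eqref{eq: signal estimator bias and variance error} and~\eqref{eq: Euclidean distance correction} as essentially algebraic consequences of it, exploiting the crucial fact that the unknown noise magnitudes $\Vert \eta_i\Vert_2^2$ cancel in the definitions~\eqref{eq: signal magnitude estimate and distance correction formulas}. For the first step I would take logarithms of the two ingredients already proved: the asymptotic expression for the scaling factor $d_i$ in~\eqref{eq: d_i variance error} and the one for the density estimate $\hat{q}_i$ in~\eqref{eq: robust density estimator bias and variance errors}. Substituting these into the definition~\eqref{eq: noise magnitude estimator}, $\hat{N}_i = \epsilon\log(d_i\sqrt{(n-1)\hat{q}_i})$, the factors $(n-1)$ and $(\pi\epsilon)^{d/2}$ carried by $d_i$ cancel against the explicit $(n-1)$ and the $(\pi\epsilon)^{d/2}$ inside $\hat{q}_i$, leaving a clean expression.

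The key cancellation to verify is that of the density. From~\eqref{eq: d_i variance error}, $\epsilon\log d_i$ contributes $\Vert\eta_i\Vert_2^2 + \epsilon\log\rho_\epsilon(x_i)$ plus logarithmic normalization constants, while $\tfrac{\epsilon}{2}\log\hat{q}_i$ contributes $\tfrac{\epsilon}{2}\log q(x_i)$ plus the deterministic constant $\epsilon\tfrac{d\log s}{4(s-1)}$ arising from the factor $s^{d/(2(s-1))}$ in~\eqref{eq: robust density estimator bias and variance errors}. Using Assumption~\ref{assump: pointwise convergence} together with $F_\epsilon = 1 + \mathcal{O}(\epsilon)$ from~\eqref{eq: F_eps def}, and the fact that $q$ is bounded away from $0$ and $\infty$ on the compact $\mathcal{M}$, I would expand $\log\rho_\epsilon(x_i) = -\tfrac12\log q(x_i) + \mathcal{O}(\epsilon)$. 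Multiplying by $\epsilon$, the two density contributions $-\tfrac{\epsilon}{2}\log q(x_i)$ and $+\tfrac{\epsilon}{2}\log q(x_i)$ cancel and the residual is $\mathcal{O}(\epsilon^2)$, yielding~\eqref{eq: noise estimator bias and variance error}. The variance term $\mathcal{E}^{(2)}_i$ then collects $\epsilon\log(1+\mathcal{E}_{i,i})$ and $\tfrac{\epsilon}{2}\log(1+\mathcal{E}^{(1)}_i)$; since both $\mathcal{E}_{i,i}$ and $\mathcal{E}^{(1)}_i$ are bounded by the right-hand side of~\eqref{eq: variance error probabilistic bound}, which tends to $0$, linearizing $\log(1+x)=x+\mathcal{O}(x^2)$ shows $\mathcal{E}^{(2)}_i$ obeys the same bound (with a possibly larger constant $C'(\epsilon)$).

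For $\hat{S}_i$ and $\hat{D}_{i,j}$ I would first record the high-dimensional distance expansions $\Vert y_i\Vert_2^2 = \Vert x_i\Vert_2^2 + \Vert\eta_i\Vert_2^2 + \mathcal{O}(E\sqrt{\log m})$ and $\Vert y_i - y_j\Vert_2^2 = \Vert x_i - x_j\Vert_2^2 + \Vert\eta_i\Vert_2^2 + \Vert\eta_j\Vert_2^2 + \mathcal{O}(\cdot)$, which follow from Lemma~\ref{lem:noise scalar product concentration} by controlling the cross terms $\langle x_i,\eta_i\rangle$, $\langle x_i-x_j,\eta_i-\eta_j\rangle$ and the inner product $\langle\eta_i,\eta_j\rangle$ for $i\neq j$; a union bound over the $\mathcal{O}(n^2)$ pairs keeps all these within~\eqref{eq: variance error probabilistic bound}, using $m\geq n^\gamma$ (Assumption~\ref{assump: dimensions}) so that $\sqrt{\log n}$ is absorbed by $\sqrt{\log m}$. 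Substituting~\eqref{eq: noise estimator bias and variance error} into~\eqref{eq: signal magnitude estimate and distance correction formulas}, the unknown $\Vert\eta_i\Vert_2^2$ (and $\Vert\eta_j\Vert_2^2$) cancel exactly, leaving $\Vert x_i\Vert_2^2$ (resp.\ $\Vert x_i-x_j\Vert_2^2$), the deterministic bias (which doubles to $\tfrac{d\log s}{2(s-1)}$ for $\hat{D}_{i,j}$ since two copies of $\hat{N}$ are subtracted), an $\mathcal{O}(\epsilon^2)$ term, and a variance term $\mathcal{E}^{(3)}_i$ or $\mathcal{E}^{(4)}_{i,j}$ of the required order.

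I expect the main obstacle to be the careful bookkeeping of the error terms rather than any single hard estimate: one must verify that passing the multiplicative relative errors $\mathcal{E}_{i,i},\mathcal{E}^{(1)}_i$ through the logarithm (and multiplying by $\epsilon$) does not enlarge their order, that the sub-Gaussian cross-term bounds of Lemma~\ref{lem:noise scalar product concentration} remain uniform over all indices after the union bound, and that every $\mathcal{O}(\epsilon)$-type remainder in $\log\rho_\epsilon$ and $\log F_\epsilon$ only feeds the $\mathcal{O}(\epsilon^2)$ bias after multiplication by $\epsilon$. Provided the relative errors are small, which is guaranteed for $m>m_0(\epsilon)$ and $n>n_0(\epsilon)$, all these steps are routine and the three claims follow.
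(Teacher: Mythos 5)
Your proposal is correct and follows essentially the same route as the paper's proof: you substitute the expressions for $d_i$ (Theorem~\ref{thm:scaling factors asym expression}) and $\hat{q}_i$ (Theorem~\ref{thm: density estimator}) into~\eqref{eq: noise magnitude estimator}, use Assumption~\ref{assump: pointwise convergence} so that the density contributions cancel to $\mathcal{O}(\epsilon^2)$ after multiplication by $\epsilon$, and then obtain~\eqref{eq: signal estimator bias and variance error} and~\eqref{eq: Euclidean distance correction} from the sub-Gaussian expansions of $\Vert y_i\Vert_2^2$ and $\Vert y_i - y_j\Vert_2^2$ together with the cancellation of $\Vert \eta_i \Vert_2^2$, exactly as the paper does. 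The only cosmetic difference is that you expand additively in logarithms, writing $\log \rho_\epsilon(x_i) = -\tfrac{1}{2}\log q(x_i) + \mathcal{O}(\epsilon)$, whereas the paper keeps the product $\rho_\epsilon(x_i)\sqrt{q(x_i)} = 1 + \mathcal{O}(\epsilon)$ inside a single logarithm before Taylor expanding.
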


Proposition~\ref{prop: noise magnitude est} asserts that for sufficiently large $m,n$ and sufficiently small $\epsilon$, the quantities $\hat{N}_i$,  $\hat{S}_i$, and $\hat{D}_{i,j}$ can approximate $\Vert \eta_i \Vert_2^2$,  $\Vert x_i \Vert_2^2$, and $\Vert x_i - x_j \Vert_2^2$, respectively, up to arbitrarily small errors with high probability. According to~\eqref{eq: noise estimator bias and variance error},~\eqref{eq: signal estimator bias and variance error}, and~\eqref{eq: Euclidean distance correction}, the first error term in these approximations is a global constant that depends explicitly on $d$, $s$, and $\epsilon$, and thus can be removed if the intrinsic dimension $d$ is known or can be estimated. Alternatively, if one is only interested in ranking $\Vert \eta_i \Vert_2^2$,  $\Vert x_i \Vert_2^2$, or $\Vert x_i - x_j \Vert_2^2$, then the relevant bias error term is improved to $\mathcal{O}(\epsilon^{2})$ since ranking is unaffected by a global additive constant. For example, this is the case if one is interested in identifying the points with the largest or smallest noise magnitudes or determining the nearest neighbors of each point $x_i$. The variance error terms $\mathcal{E}^{(2)}$, $\mathcal{E}^{(3)}$, $\mathcal{E}^{(4)}$ have the same behavior as $\mathcal{E}$ from Theorem~\ref{thm:scaling factors asym expression} in Section~\ref{sec: main results}.

Note that the noise magnitude estimator $\hat{N}_i$ in~\eqref{eq: noise magnitude estimator} corrects for the effect of the variability of the density $q(x_i)$ on the scaling factors $\mathbf{d}$. However, one does not have to use $\hat{q}_i$ in~\eqref{eq: noise magnitude estimator}, and it can be replaced with the constant $1$. In such a case, we would still have an $\mathcal{O}(\epsilon)$ bias error term in each of~\eqref{eq: noise estimator bias and variance error},~\eqref{eq: signal estimator bias and variance error}, and~\eqref{eq: Euclidean distance correction}, but it would depend on $q(x_i)$ (and $q(x_j)$ in the case of~\eqref{eq: Euclidean distance correction}). Hence, the $\mathcal{O}(\epsilon)$ term would no longer be a global constant that does not influence ranking. Consequently, the main advantage of accounting for the density is to improve the effective bias error term from $\mathcal{O}(\epsilon)$ to $\mathcal{O}(\epsilon^2)$ under ranking.

We begin by demonstrating $\hat{N}_i$ and $\hat{S}_i$ via a toy example. We generated two centered circles in $\mathbb{R}^2$, one with radius $1$ and the other with radius $0.5$. We independently sampled $500$ points from the first circle and $500$ from the second circle according to the same (non-uniform) density used for Figure~\ref{fig: density est clean data}. We then embedded all points in $\mathbb{R}^m$ with $m=500$ by applying a random orthogonal transformation and added i.i.d outlier-type noise $\eta_i$ taken to be zero with probability $0.9$ and sampled from a multivariate normal with covariance $\sigma_i I_m/m$ with probability $0.1$, where $\sigma_i$ is sampled uniformly from $(0,1)$; see Figure~\ref{fig: two circles outlier noise} for a two-dimensional visualization of this setup. Figure~\ref{fig: two circles noisy signal magnitude} illustrates the noisy point magnitudes $\Vert y_i \Vert_2^2$, where the signal magnitudes are clearly intertwined with the noise. 
Figure~\ref{fig: two circles noise estimator} illustrates the noise magnitude estimator $\hat{N}_i$ from~\eqref{eq: noise magnitude estimator} with $s=2$ and $\epsilon=0.1$, for each index $i=1,\ldots,1000$. It is evident that $\hat{N}_i$ accurately infers the true noise magnitudes $\Vert \eta_i\Vert_2^2$, albeit a small upward shift due to the bias term $\epsilon d \log s /(4(s-1))$ from~\eqref{eq: noise estimator bias and variance error}. Importantly, $\hat{N}_i$ is invariant to the density $q(x_i)$ and the signal magnitudes $\Vert x_i \Vert_2^2$. Similarly, Figure~\ref{fig: two circles estimated signal} shows that $\hat{S}_i$ accurately recovers $\Vert x_i \Vert_2^2$ up to a small global shift and minor fluctuations. Overall, the doubly stochastic scaling allows us to decompose $\Vert y_i \Vert_2^2$ into signal and noise parts. In particular, $\hat{N}_i$ can be utilized to identify the noisy points in this setting, while $\hat{S}_i$ reveals that the clean data points can be partitioned into two groups with distinct magnitudes.

\begin{figure} 
  \centering
  	{
  	\subfloat[][Two-dimensional visualization]  
  	{
    \includegraphics[width=0.29\textwidth]{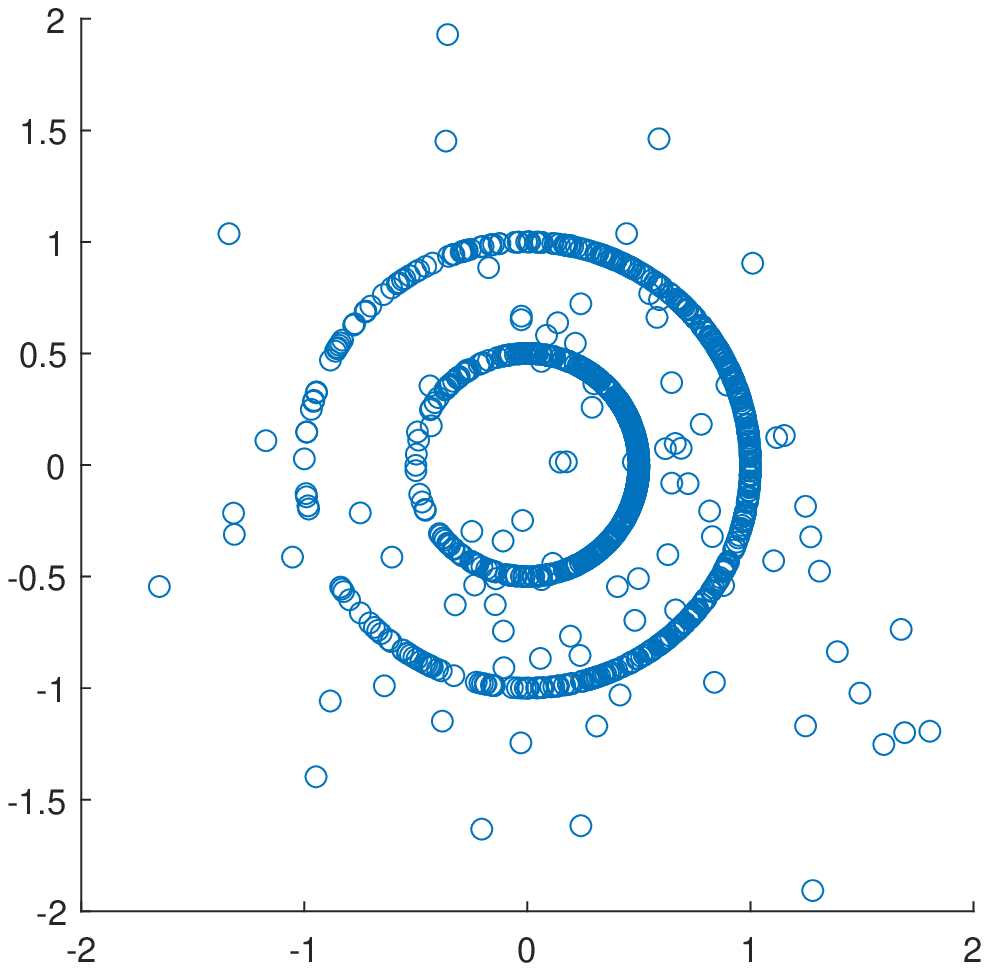} \label{fig: two circles outlier noise}
    }
    \hspace{30pt}
    \subfloat[][Magnitudes of noisy points $\Vert y_i \Vert_2^2$]
  	{
    \includegraphics[width=0.29\textwidth]{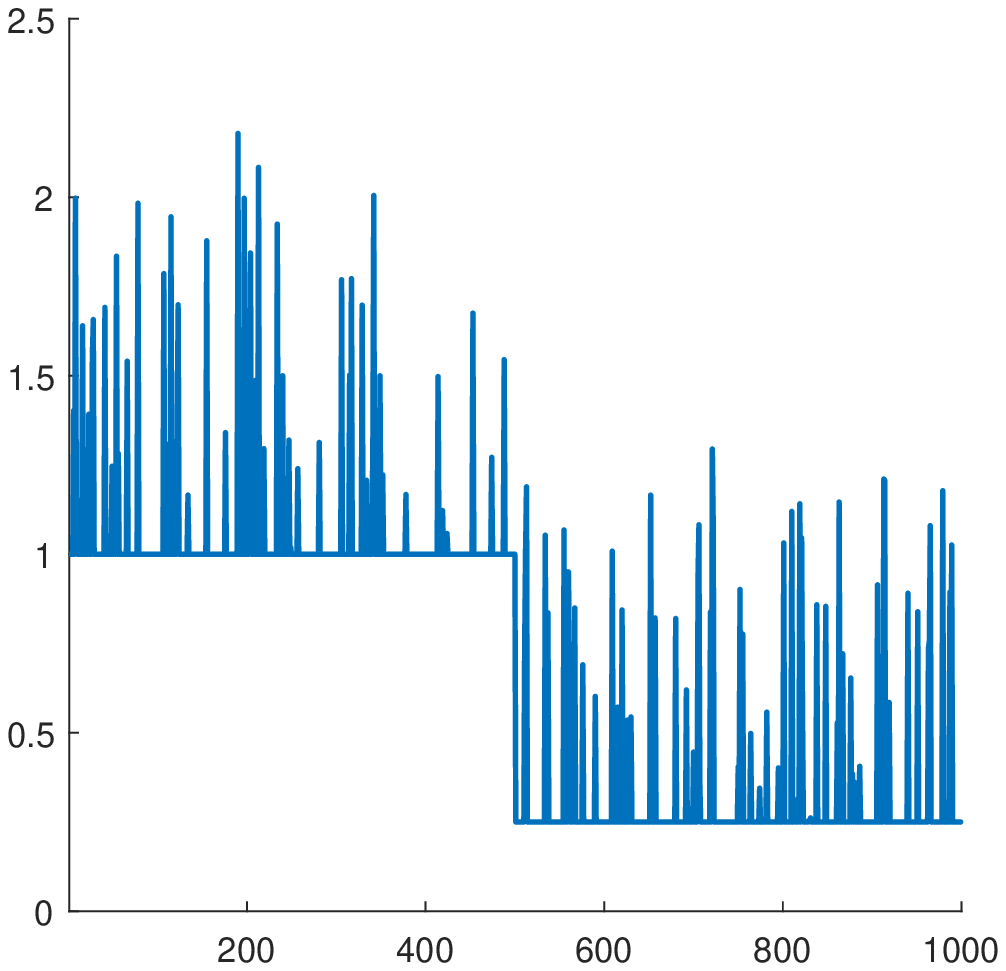} \label{fig: two circles noisy signal magnitude}
    }
    \\
    \vspace{10pt}
    \subfloat[][Estimated vs. true noise magnitude]  
  	{
    \includegraphics[width=0.29\textwidth]{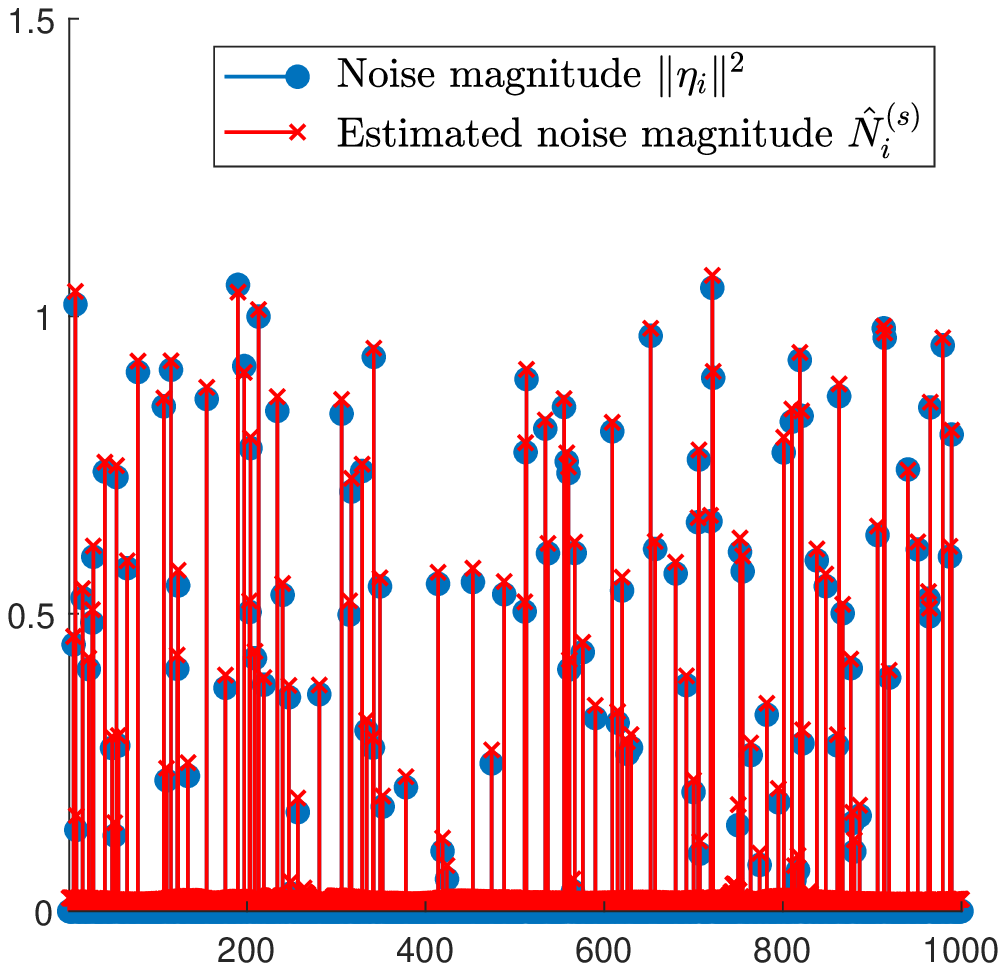} \label{fig: two circles noise estimator}
    }
    \hspace{30pt}
    \subfloat[][Estimated vs. true signal magnitude]  
  	{
    \includegraphics[width=0.29\textwidth]{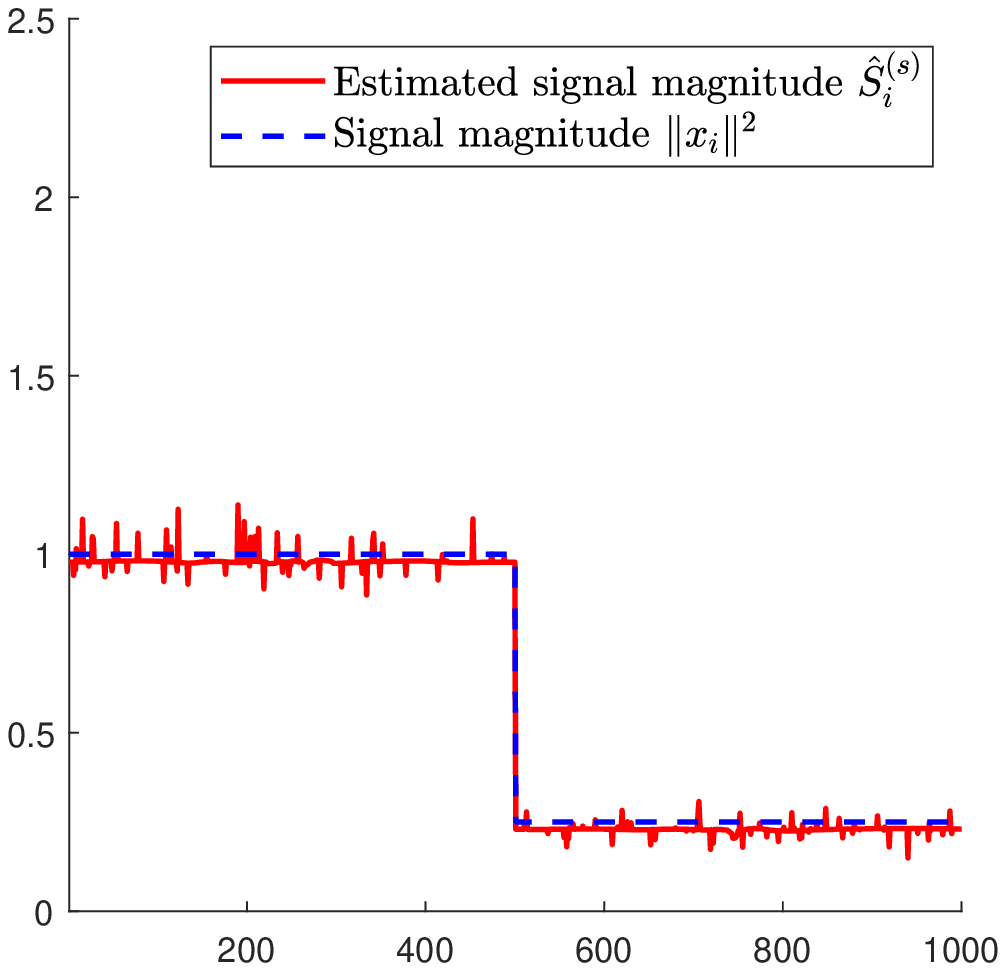} \label{fig: two circles estimated signal}
    }
    }
    \caption
    {Example of estimating the noise magnitudes $\Vert \eta_i \Vert_2^2$ and signal magnitudes $\Vert x_i\Vert_2^2$ using $\hat{N}_i$ and $\hat{S}_i$ from~\eqref{eq: noise magnitude estimator} and~\eqref{eq: Euclidean distance correction}, respectively (panels (c) and (d)), with $s=2$, $\epsilon=0.1$, for $i=1,\ldots,1000$. The noisy observations $y_i$ are sampled from two concentric circles, each with the non-uniform density of Figure~\ref{fig: density est clean data}, embedded in dimension $m=500$ and corrupted with probability $0.1$ by multivariate normal noise with covariance $\sigma_i I_m$, where $\sigma_i$ is sampled uniformly at random from $(0,1)$ (panels (a) and (b)).
    } \label{fig: recovering noise magnitudes and signal magnitudes}
\end{figure} 

Next, we demonstrate the advantage of correcting the noisy Euclidean distances $\Vert y_i - y_j \Vert_2^2$ via $\hat{D}$ from~\eqref{eq: Euclidean distance correction}. Figure~\ref{fig: noisy vs. clean distances} shows the noisy distances $\Vert y_i - y_j \Vert_2^2$ versus the clean distances $\Vert {x}_i - {x}_j \Vert_2^2$ in the setup of Figures~\ref{fig: density est example varying noise} and~\ref{fig: circle with varying noise}, where points are sampled from a circle and corrupted by noise with magnitude that is varying smoothly from $0.01$ to $0.5$ (see more details in the text of Section~\ref{sec: robust density estimation}). It is evident that the Euclidean distances are strongly corrupted by the variability of the noise magnitude and deviate substantially from the desired behavior, which is the dashed diagonal line (describing perfect correspondence). In Figure~\ref{fig: corrected vs. clean distances} we depict the corrected distances $\hat{D}_{i,j}$ computed with $s=2$ and $\epsilon=0.1$, which are much closer to the clean distances and concentrate well around a line with slope $1$ that is shifted slightly below the desired trend. This shift agrees almost perfectly with the bias term $-\epsilon {d \log(s)}/{(2(s-1))} \approx -0.035$ appearing in~\eqref{eq: Euclidean distance correction}. Lastly, for each $k=1,\ldots,50$, we computed the $k$ nearest neighbors of each $x_i$ according to the noisy distances $\Vert y_i - y_j \Vert_2^2$ and the corrected distances $\hat{D}_{i,j}$. For each $k$, Figure~\ref{fig: near neighbor identification accuracy} shows the proportion of these nearest neighbors that coincide with any of the $k$ true nearest neighbors according to the clean distances $\Vert x_i - x_j \Vert_2^2$, averaged over $i=1,\ldots,1000$. It is clear that the corrected distances allow for more accurate identification of near neighbors, with more than $80\%$ accuracy for $k=50$ while the noisy distances provide less than $60\%$ accuracy in that case. Note that the nearest neighbors of each point $x_i$ according to the corrected distances $\hat{D}_{i,j}$ correspond to the largest entries of $\sqrt{\hat{q}_i} W_{i,j} \sqrt{\hat{q}_j}$ in each row $i$. Hence, Figure~\ref{fig: near neighbor identification accuracy} also describes the advantage of the affinity matrix $\sqrt{\hat{q}_i} W_{i,j} \sqrt{\hat{q}_j}$ over the noisy Gaussian kernel ${K}_{i,j}$ in encoding similarities.

\begin{figure} 
  \centering
  	{
  	\subfloat[Noisy vs. clean distances]  
  	{
    \includegraphics[width=0.29\textwidth]{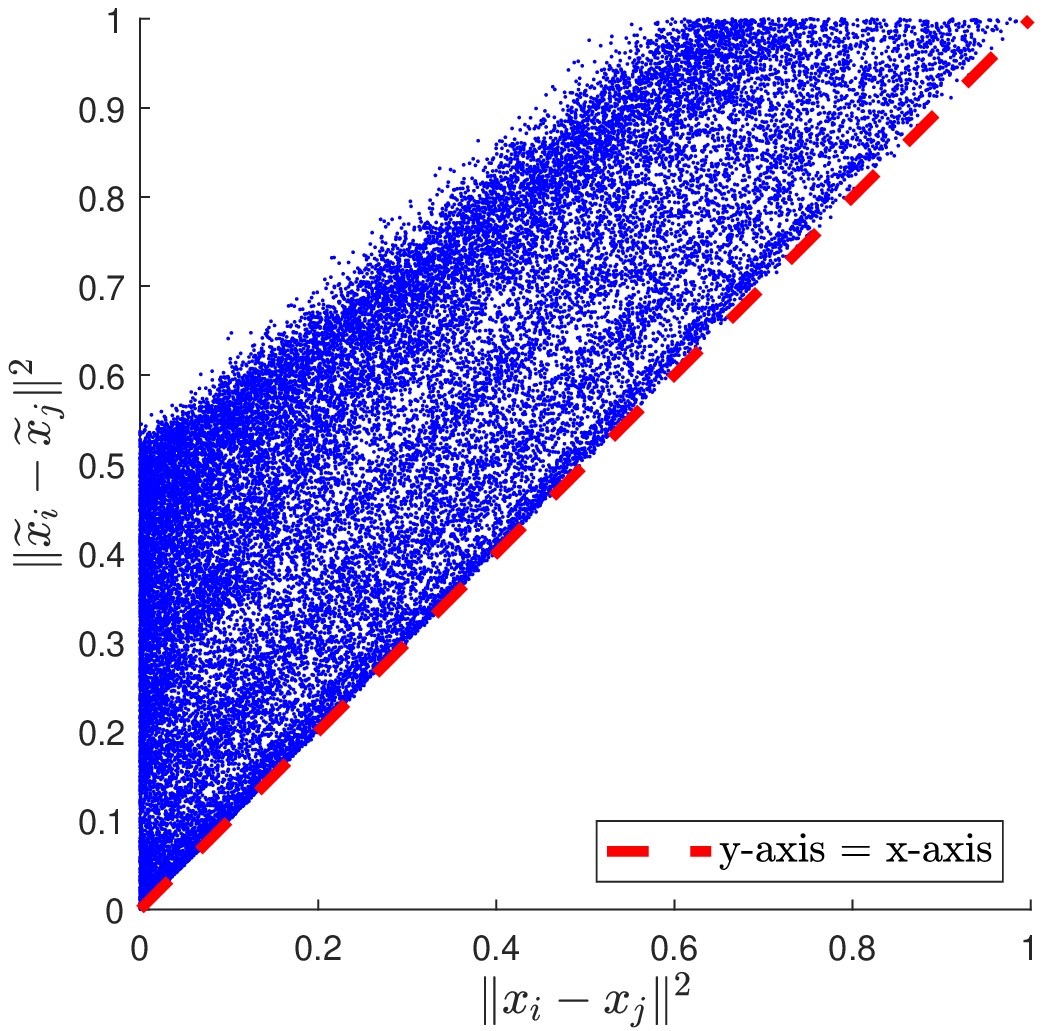} \label{fig: noisy vs. clean distances}
    }
    \hspace{15pt}
    \subfloat[Corrected vs. clean distances]  
  	{
    \includegraphics[width=0.29\textwidth]{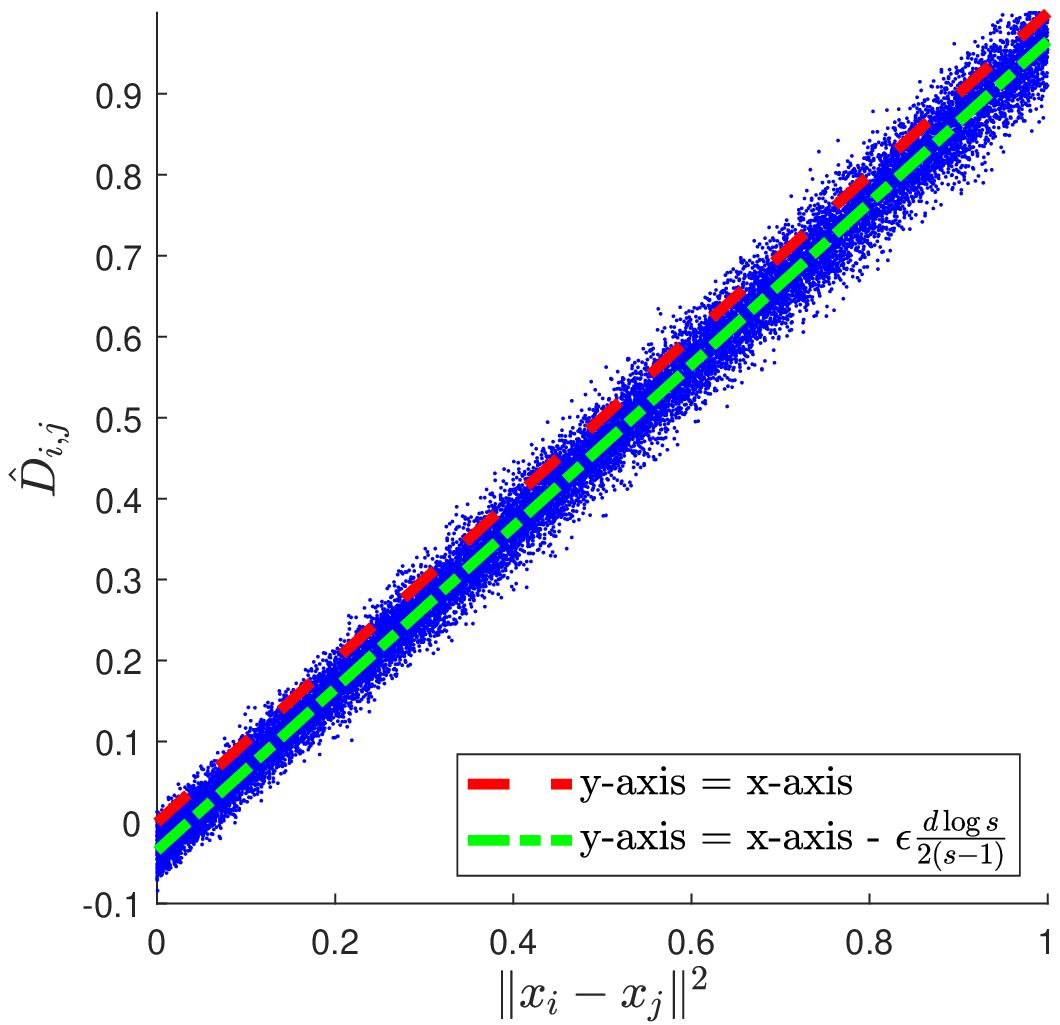} \label{fig: corrected vs. clean distances}
    }
    \hspace{15pt}
    \subfloat[Near neighbor detection]  
  	{
    \includegraphics[width=0.29\textwidth]{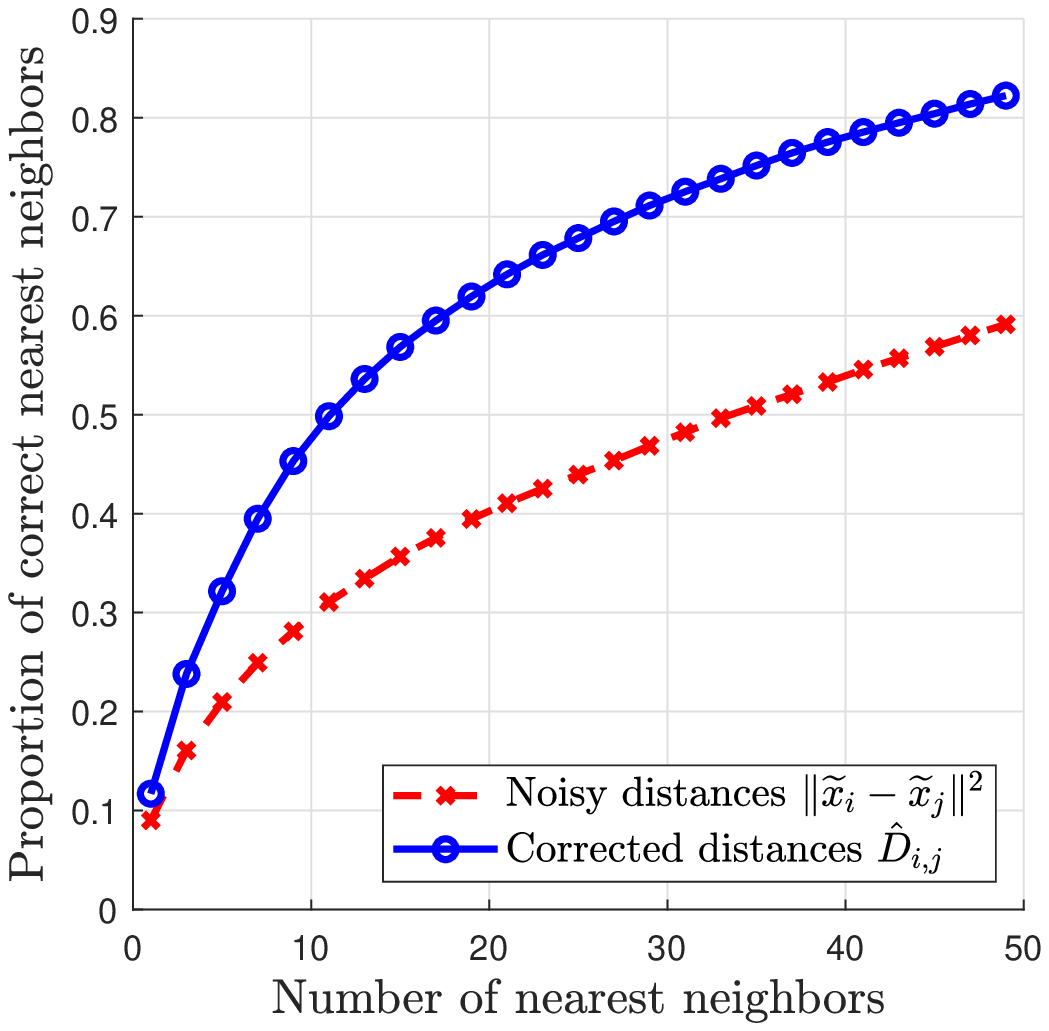} \label{fig: near neighbor identification accuracy}
    }
    }
    \caption
    {Influence of noise on pairwise Euclidean distances and on detection accuracy of the nearest neighbors, compared to the corrected distances $\hat{D}_{i,j}$ from~\eqref{eq: Euclidean distance correction} with $s=2$ and $\epsilon=0.1$.
    The data is sampled from a unit circle with non-uniform density, embedded in dimension $m=1000$ and corrupted by noise whose magnitude varies smoothly from $0.01$ to $0.5$; see Figures~\ref{fig: density est example varying noise} and~\ref{fig: circle with varying noise} and relevant text.
    } \label{fig: recovering Euclidean distances}
    \end{figure} 

\subsection{Robust weighted manifold Laplacian approximation} \label{sec: graph Laplacian estimation}
In what follows we construct a family of normalizations that is a robust analogue of~\eqref{eq: traditional graph Laplacian normalizations} and establish convergence to the associated family of differential operators (see~\cite{coifman2006diffusionMaps}).

Fix $\alpha\in [0,1]$, and define
\begin{equation}
    L^{(\alpha)} = \frac{4(I_{n} - \hat{W}^{(\alpha)})}{\epsilon}, \qquad \hat{W}^{(\alpha)}_{i,j} = \frac{\widetilde{W}^{(\alpha)}_{i,j}}{\sum_{j=1}^n \widetilde{W}^{(\alpha)}_{i,j}}, \qquad \widetilde{W}^{(\alpha)}_{i,j} = \frac{W_{i,j}}{\left[ \hat{q}_i \hat{q}_j \right]^{\alpha - 1/2}}
    , \label{eq: robust graph Laplacian normalizations}
\end{equation}
for all $i,j=1,\ldots,n$,
where $I_{n}$ is the $n\times n$ identity matrix, and $L^{(\alpha)}$ is an appropriately-normalized graph Laplacian for $\hat{W}^{(\alpha)}$. The formulas in~\eqref{eq: robust graph Laplacian normalizations} are equivalent to those in~\eqref{eq: traditional graph Laplacian normalizations} except that we utilize the robust density estimator $\hat{q}_i$ instead of the standard KDE and further account for the asymptotic approximation of $W$ in~\ref{eq: W_ij and d_i limiting form} (leading to the power $\alpha -0.5$ in the denominator of $\widetilde{W}^{(\alpha)}$ instead of the power $\alpha$ appearing in $P^{(\alpha)}$ of~\eqref{eq: traditional graph Laplacian normalizations}). Note that when $\alpha = 0.5$, no normalization of $W$ is actually performed since $\hat{W}^{(0.5)}_{i,j} = W_{i,j}$.

Next, we define the Schrodinger-type differential operator
\begin{equation}
    \{T^{(\alpha)}f\}(x) = \frac{\Delta_\mathcal{M}\{ f q^{1-\alpha} \} (x)}{[q(x)]^{1-\alpha}} - \frac{\Delta_\mathcal{M}\{q^{1-\alpha} \} (x)}{[q(x)]^{1-\alpha}} f(x), \label{eq: differential operator T}
\end{equation}
for any $f\in\mathcal{C}^2(\mathcal{M})$, where $\Delta_\mathcal{M}$ is the negative Laplace-Beltrami operator on $\mathcal{M}$. If the sampling density is uniform, i.e., $q(x)$ is a constant function, then $T^{(\alpha)}$ reduces to $\Delta_\mathcal{M}$ for any $\alpha$. Otherwise, $T^{(\alpha)}$ depends on the density $q(x)$, except for the special case of $\alpha = 1$, where the density vanishes and $T^{(\alpha)}$ again becomes $\Delta_\mathcal{M}$. When $\alpha = 0.5$, $T^{(\alpha)}$ is the Fokker-Planck operator describing Brownian motion via the Langevin equation~\cite{nadler2006diffusion}, and when $\alpha = 0$,  $T^{(\alpha)}$ describes the limiting operator of the popular random walk graph Laplacian. 

We now have the following result, whose proof can be found in Appendix~\ref{appendix: proof of robust graph Laplacian normalization}.
\begin{theorem} \label{thm: Laplacian estimator}
Fix $f\in\mathcal{C}^3(\mathcal{M})$.  Under Assumptions~\ref{assump: manifold}--\ref{assump: pointwise convergence}, there exist $\epsilon_0,t_0,m_0(\epsilon),n_0(\epsilon), C^{'}({\epsilon})>0$, such that for all $\epsilon < \epsilon_0$, $m>m_0(\epsilon)$, $n>n_0(\epsilon)$, we have
\begin{equation}
    \sum_{j=1}^n L^{(\alpha)}_{i,j} f(x_j) =  \{T^{(\alpha)}f\}(x_i) + \mathcal{O}(\epsilon^\beta) +  \mathcal{E}^{(5)}_i, \label{eq: robust graph Laplacian approximation bias and variance errors}
\end{equation}
for all $i=1,\ldots,n$, where $\max_i \vert \mathcal{E}_i^{(5)} \vert$ is upper bounded by the right-hand side of~\eqref{eq: variance error probabilistic bound} with probability at least $1-n^{-t}$, for any $t>t_0$.
\end{theorem}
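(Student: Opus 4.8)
The plan is to reduce the statement about the normalized robust graph Laplacian $L^{(\alpha)}$ acting on a test function $f$ to a weighted kernel average against the clean Gaussian kernel, and then invoke the asymptotic expansion of the Gaussian kernel on a manifold (Lemma~\ref{lem: diffusion maps lemma}, i.e. the Diffusion Maps expansion of~\cite{coifman2006diffusionMaps}). The starting point is to substitute the asymptotic form of $W_{i,j}$ from Theorem~\ref{thm:scaling factors asym expression} and the density estimator $\hat{q}_i$ from Theorem~\ref{thm: density estimator} into the definition~\eqref{eq: robust graph Laplacian normalizations} of $\widetilde{W}^{(\alpha)}_{i,j}$. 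Since $W_{i,j} \sim \mathcal{K}_\epsilon(x_i,x_j)\,[q(x_i)q(x_j)]^{-1/2}/((n-1)(\pi\epsilon)^{d/2})$ and $\hat{q}_i$ estimates $q(x_i)$ up to the known constant $(\pi\epsilon)^{d/2} s^{d/(2(s-1))}$, the factor $[\hat{q}_i \hat{q}_j]^{\alpha - 1/2}$ is designed to convert the exponent on the density from $-1/2$ to $-(1-\alpha)$. Thus, up to the global constant and the relative errors, $\widetilde{W}^{(\alpha)}_{i,j}$ is proportional to $\mathcal{K}_\epsilon(x_i,x_j)\,[q(x_i)q(x_j)]^{-(1-\alpha)}$, which is exactly the form analyzed by the standard $\alpha$-normalized Diffusion Maps kernel with a modified density power.

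First I would track the errors carefully. The key point is that the global constant $(n-1)(\pi\epsilon)^{d/2}$ in $W_{i,j}$ and all constants in $\hat{q}_i$ cancel in $\hat{W}^{(\alpha)}_{i,j} = \widetilde{W}^{(\alpha)}_{i,j} / \sum_{j} \widetilde{W}^{(\alpha)}_{i,j}$, since $\hat{W}^{(\alpha)}$ is row-stochastic by construction. So I only need to control the multiplicative relative errors: the $(1 + \mathcal{E}_{i,j})$ from Theorem~\ref{thm:scaling factors asym expression}, the $[1 + \mathcal{O}(\epsilon) + \mathcal{E}^{(1)}_i]$ from Theorem~\ref{thm: density estimator}, and the $\mathcal{O}(\epsilon^{1+\beta})$ deviation of $\rho_\epsilon$ from $q^{-1/2}F_\epsilon$ via Assumption~\ref{assump: pointwise convergence}. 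Raising $\hat{q}_i$ to the power $\alpha - 1/2$ preserves the $\mathcal{O}(\epsilon)$ and $\mathcal{E}^{(1)}$-type relative errors up to constants depending on $\alpha$ and lower bounds on $q$. Collecting these, the relative perturbation of each $\widetilde{W}^{(\alpha)}_{i,j}$ is of the same order as the right-hand side of~\eqref{eq: variance error probabilistic bound}, plus a bias of order $\epsilon^{\min(1,\beta)}$ inherited from the density correction and the $F_\epsilon$ term in $\rho_\epsilon$.

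Next I would apply the Diffusion Maps asymptotics to the clean weighted kernel. Writing $h(x) := [q(x)]^{-(1-\alpha)}$, the row-normalized operator $\sum_j \hat{W}^{(\alpha)}_{i,j} f(x_j)$ is, to leading order, a population average of $\mathcal{K}_\epsilon(x_i, y) h(y) f(y)\, q(y)\,d\mu(y)$ divided by the corresponding average of $\mathcal{K}_\epsilon(x_i,y) h(y) q(y)\,d\mu(y)$. The standard expansion gives $\int_\mathcal{M} \mathcal{K}_\epsilon(x,y) g(y) q(y)\, d\mu(y) = (\pi\epsilon)^{d/2}[ g(x)q(x) + \tfrac{\epsilon}{4}\Delta_\mathcal{M}\{gq\}(x) + \mathcal{O}(\epsilon^2)]$. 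Applying this to numerator ($g = hf$) and denominator ($g = h$), forming the ratio, and using $(I_n - \hat{W}^{(\alpha)})$ scaled by $4/\epsilon$, the zeroth-order terms cancel and the $\mathcal{O}(\epsilon)$ terms produce precisely $\Delta_\mathcal{M}\{fhq\}/(hq) - f\,\Delta_\mathcal{M}\{hq\}/(hq)$. Since $hq = q^{\alpha}$… wait, $h q = q^{-(1-\alpha)}q = q^{\alpha}$; to match~\eqref{eq: differential operator T} one checks $\Delta_\mathcal{M}\{f q^{1-\alpha}\}/q^{1-\alpha} - f\Delta_\mathcal{M}\{q^{1-\alpha}\}/q^{1-\alpha}$ arises from the correct density power, so I would verify the bookkeeping of the exponent (the DS kernel already carries an implicit $q^{-1/2}$, which is why the normalization uses $\alpha - 1/2$ rather than $\alpha$) to land on $T^{(\alpha)}$ exactly.

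\textbf{The main obstacle} will be propagating the pointwise multiplicative errors $\mathcal{E}_{i,j}$ through the division by $4/\epsilon$ without blowing them up. Because $L^{(\alpha)}$ divides by $\epsilon$, a relative error of size $\delta$ in $\hat{W}^{(\alpha)}_{i,j}$ contributes an additive error of order $\delta/\epsilon$ to $\sum_j L^{(\alpha)}_{i,j} f(x_j)$ — which is why the result requires $m,n$ large enough (depending on $\epsilon$) so that the right-hand side of~\eqref{eq: variance error probabilistic bound} is $o(\epsilon)$. I would handle this by (i) showing the leading $\mathcal{O}(1)$ parts of numerator and denominator cancel exactly in $I_n - \hat{W}^{(\alpha)}$, so only $\mathcal{O}(\epsilon)$ and smaller terms survive before the $4/\epsilon$ scaling, and (ii) bounding the contribution of the $\mathcal{E}$-errors after cancellation, absorbing the residual $\mathcal{E}$-dependence into $\mathcal{E}^{(5)}_i$ while carefully using the uniform (over $j$) control in~\eqref{eq: variance error probabilistic bound} and a large-sample concentration argument to replace the empirical sums $\sum_j$ by their population integrals with an additional $\sqrt{\log n/n}$ fluctuation. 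The bias term $\mathcal{O}(\epsilon^\beta)$ emerges from combining the $\mathcal{O}(\epsilon)$ Diffusion Maps bias with the $\mathcal{O}(\epsilon^{1+\beta})/\epsilon = \mathcal{O}(\epsilon^\beta)$ contribution of the $F_\epsilon$-correction in $\rho_\epsilon$, which dominates when $\beta < 1$.
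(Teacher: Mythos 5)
Your proposal follows essentially the same route as the paper's proof: reduce $L^{(\alpha)}$ to a ratio of kernel-weighted sums, substitute the asymptotics of $W$ (Theorem~\ref{thm:scaling factors asym expression}), of $\hat q_i$ (Theorem~\ref{thm: density estimator}), and of $\rho_\epsilon$ (Assumption~\ref{assump: pointwise convergence}), pass to population integrals by Hoeffding-type concentration, expand numerator and denominator with Lemma~\ref{lem: diffusion maps lemma}, and extract $T^{(\alpha)}$ from the first-order terms after the zeroth-order cancellation, with the unstructured $\mathcal{O}(\epsilon^{1+\beta})$ residual divided by $\epsilon$ giving the $\mathcal{O}(\epsilon^\beta)$ bias — exactly as in Appendix~\ref{appendix: proof of robust graph Laplacian normalization}. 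The only repairs needed are the bookkeeping you flagged yourself: the effective weight is $h = q^{-\alpha}$, not $q^{-(1-\alpha)}$ (since $-1/2 - (\alpha-1/2) = -\alpha$), so $hq = q^{1-\alpha}$ and the ratio lands on $T^{(\alpha)}$ exactly; and the expansion in Lemma~\ref{lem: diffusion maps lemma} carries the curvature term $\omega(x)$ (with the sign $\omega f - \Delta_\mathcal{M}\{f\}$), which a complete argument must include and then cancel through the ratio, which is what the paper's common factor $J_i$ accomplishes.
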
 
Theorem~\ref{thm: Laplacian estimator} shows that for sufficiently large $m,n$, and sufficiently small $\epsilon$, the matrix $L^{(\alpha)}$ can approximate the action of the operator $T^{(\alpha)}$ pointwise up to an arbitrarily small error with high probability. If $\alpha=1$, then $L^{(\alpha)}$ approximates the (negative) Laplace-Beltrami operator $\Delta_\mathcal{M}$, which encodes the intrinsic geometry of the manifold~\cite{coifman2006diffusionMaps} regardless of the sampling density. If $\alpha = 0.5$, then $L^{(\alpha)} = 4(I_n - W)/\epsilon$ approximates the Fokker-Planck operator, suggesting that the doubly stochastic Markov matrix $W$ simulates Langevin diffusion on $\mathcal{M}$, agreeing with the results in~\cite{marshall2019manifold,wormell2021spectral,cheng2022bi}. If $\alpha = 0$, we have $\widetilde{W}^{(\alpha)}_{i,j} = \sqrt{\hat{q}_{i}} W_{i,j} \sqrt{\hat{q}_{j}}$, which corrects for the influence of density on $W$ according to~\eqref{eq: W_ij and d_i limiting form}, approximating the clean Gaussian kernel up to a global constant. In this case, $L^{(\alpha)}$ approximates the same operator as the standard random walk graph Laplacian on the clean data.
Theorem~\ref{thm: Laplacian estimator} shows that the popular family of normalizations~\eqref{eq: traditional graph Laplacian normalizations} can be made robust to general high-dimensional noise via the doubly stochastic affinity matrix $W$ and our robust density estimator~\eqref{eq:density estimator formula}.  

In Figure~\ref{fig: Laplacian estimation example} we demonstrate the advantage of the robust graph Laplacian normalization~\eqref{eq: robust graph Laplacian normalizations} for $\alpha=1$ over the traditional normalization $4(I_n - \hat{P}^{(1)})/\epsilon$, where $\hat{P}^{(1)}$ is from~\eqref{eq: traditional graph Laplacian normalizations}.
We used the same setting as the one used for Figure~\ref{fig: density est clean data} and Figure~\ref{fig: density est example varying noise}, namely the unit circle with non-uniform density, where the sampled points are either clean or corrupted by smoothly varying noise; see more details in the corresponding text of Section~\ref{sec: robust density estimation}. To quantify the accuracy of the approximation in~\eqref{eq: robust graph Laplacian approximation bias and variance errors}, we used the test function $f(\theta(x)) = (\cos(\theta(x))+\sin(2\theta(x)))/5$, where $\theta(x)\in [0,2\pi)$ is the angle of a point $x$ on the circle. For $\alpha=1$, $T^{(\alpha)}$ reduces to the Laplace-Beltrami operator $\Delta_\mathcal{M}$, in which case $\{\Delta_\mathcal{M}(f)\}(x_i) = (-\cos(\theta_i)-4\sin(2\theta_i))/5$, where $\theta_i=\theta(x_i)$. We then computed the maximal absolute difference between $\sum_{j=1}^n L^{(\alpha)}_{i,j} f(x_j)$ and $\{\Delta_\mathcal{M}(f)\}(x_i)$ over $i=1,\ldots,n$, both for our robust normalization as well as the traditional one. Figure~\ref{fig: robust Laplacian vs n} shows these errors versus the sample size $n$, where $\epsilon=0.1$, $s=2$, $m=n$, and we averaged the errors over $30$ randomized trials. Figure~\ref{fig: robust Laplacian vs n} shows the same errors versus the bandwidth parameter $\epsilon$, where $m=n=5000$, $s=2$, and we averaged the results over $20$ randomized trials. It is evident that the robust and the traditional graph Laplacian normalizations perform nearly identically in the clean case, both across $n$ and across $\epsilon$, suggesting that the bias and variance errors in~\eqref{eq: robust graph Laplacian approximation bias and variance errors} match those for the traditional normalization, at least in our setting. On the other hand, the robust normalization performs much better in the noisy case whenever the error is dominated by the variance term, i.e., when $\epsilon$ is sufficiently small with respect to the sample size $n$, while having almost identical behavior when the error is dominated by the bias term. Consequently, the robust normalization achieves smaller errors for any fixed bandwidth in this scenario, and allows us to use a smaller optimally-tuned bandwidth to obtain a better approximation of the Laplace-Beltrami operator under noise.

\begin{figure} 
  \centering
  	{
  	\subfloat[Error versus $n$]  
  	{
    \includegraphics[width=0.35\textwidth]{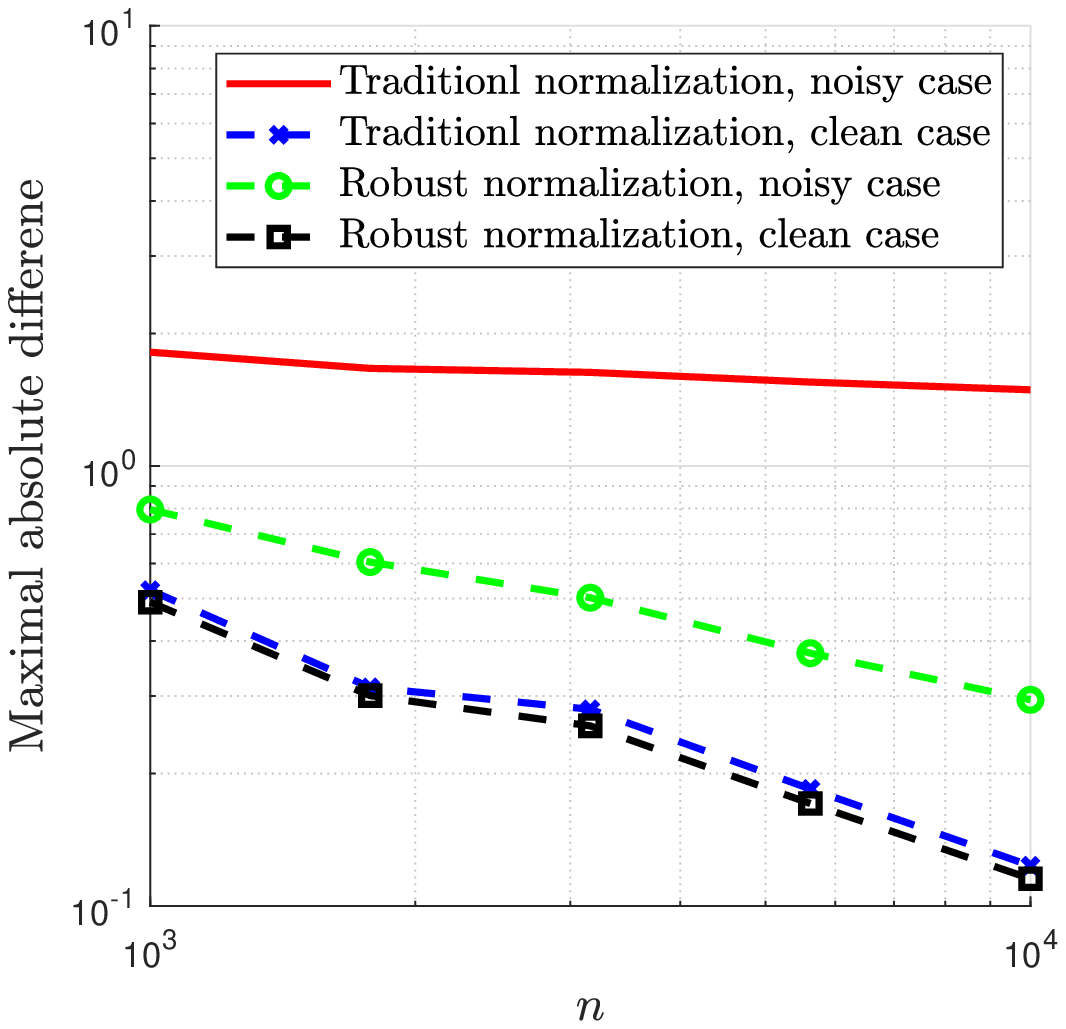} \label{fig: robust Laplacian vs n}
    }
    \hspace{25pt}
    \subfloat[Error versus $\epsilon$]  
  	{
    \includegraphics[width=0.35\textwidth]{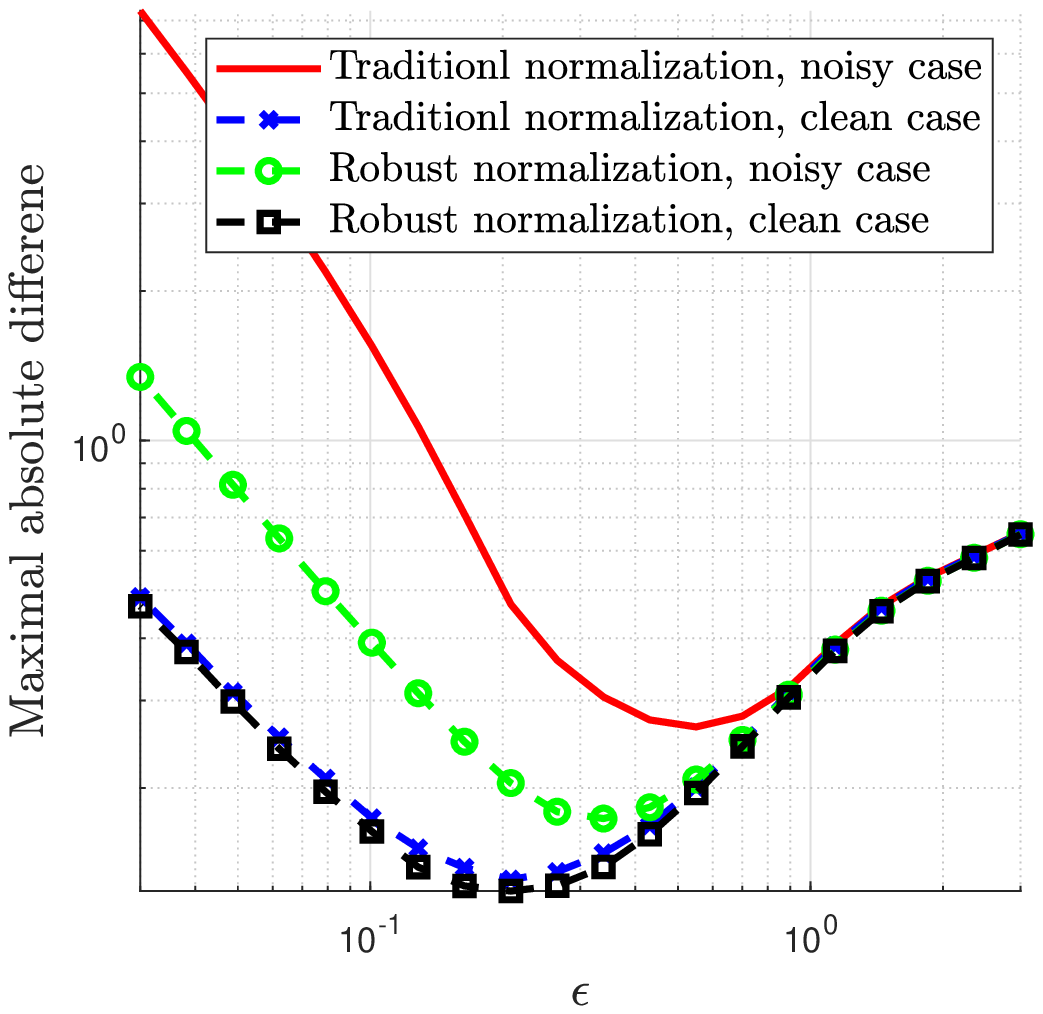} \label{fig: robust Laplacian vs epsilon}
    }
    }
    \caption
    {Maximal absolute difference between the Laplace-Beltrami operator $\{\Delta_\mathcal{M}f\}(x_i)$ and its approximation $\sum_{j=1}^n L^{(\alpha)}_{i,j} f(x_j)$ using the robust graph Laplacian normalization~\eqref{eq: robust graph Laplacian normalizations} with $\alpha=1$ and its traditional counterpart, versus the sample size $n$ and the bandwidth $\epsilon$. The data is sampled according to the setting used for Figures~\ref{fig: density est clean data}, ~\ref{fig: density est example varying noise}, and~\ref{fig: circle with varying noise}, namely a unit circle with non-uniform density where the points are either clean or corrupted by smoothly varying noise in high dimension. } \label{fig: Laplacian estimation example}
    \end{figure} 

\section{Experiments on real single-cell RNA-sequencing data} \label{sec: scRNA-seq example}
In this section, we demonstrate our results using real data from Single-cell RNA-sequencing (scRNA-seq), which is a revolutionary technology for measuring high-dimensional gene expression profiles of individual cells in diverse populations~\cite{tang2009mrna,macosko2015highly}. In this case, each observation $\widetilde{y}_i\in\mathbb{R}^m$ is a vector of nonnegative integers describing the expression levels of $m$ different genes in the $i$'th cell of the sample. The high resolution of the data -- given at the single-cell level -- makes it possible to study the similarities between different cells and to characterize different cell populations, which is of paramount importance in immunology and developmental biology. However, one of the main challenges in analyzing scRNA-seq data is the high levels of noise and its non-uniform nature~\cite{kharchenko2021triumphs,kim2015characterizing,kim2020demystifying}. 

To demonstrate our results, we used the popular dataset of Purified Peripheral Blood Mononuclear Cells (PBMC) by~\cite{zheng2017massively}, where $32733$ genes are sequenced over $94654$ cells that are annotated experimentally according to $10$ known cell types. To preprocess the data, we first randomly subsampled $500$ cells from each of the following types: `b cells', `cd14 monocytes', `cd34', `cd4 helper', `cd56 nk', and `cytotoxic t'. These cell types are fairly distinguishable one from another, thereby simplifying the interpretability of our subsequent results, while the subsampling makes computations more tractable. We then computed the total expression count for each cell, given by $c_i = \sum_{j=1}^m \widetilde{y}_{i,j}$, where $\widetilde{y}_{i,j}$ denotes the $j$'th entry of $\widetilde{y}_i$, and computed the normalized observations $y_i = \widetilde{y}_i/c_i$. This is a standard normalization in scRNA-seq for making the cell descriptors to be probability vectors, thereby removing the influence of technical variability of counts (also known as ``read depth'') across cell populations~\cite{vieth2019systematic,cole2019performance}. The doubly stochastic scaling of the Gaussian kernel is then evaluated using the normalized observations $y_1,\ldots,y_n$, $n=3000$, with a prescribed tolerance of $10^{-6}$ and a maximum of $10^4$ iterations in the algorithm of~\cite{wormell2021spectral}. 

In our first experiment, we set out to investigate the accuracy of noise magnitude estimator $\hat{N}_i$ described in~\eqref{eq: noise magnitude estimator}. To validate our noise estimates, we assume the popular Poisson data model $\widetilde{y}_{i,j} \sim \operatorname{Poisson}(\mu_{i,j})$~\cite{sarkar2021separating}. In this case, we have $\mathbb{E}[\widetilde{y}_{i,j}] = \operatorname{Var}[\widetilde{y}_{i,j}] = \mu_{i,j}$, and by standard concentration arguments,
\begin{equation}
    \Vert \eta_i \Vert_2^2 \sim \frac{\sum_{j=1}^m \left(  \widetilde{y}_{i,j} - \mu_{i,j} \right)^2}{c_i^2} \sim \frac{\sum_{j=1}^m \operatorname{Var}[\widetilde{y}_{i,j}]}{c_i^2} = \frac{\sum_{j=1}^m \mu_{i,j}}{c_i^2} \sim \frac{\sum_{j=1}^m \widetilde{y}_{i,j}}{c_i^2} = \frac{1}{c_i}, \label{eq: predicted noise Poisson model}
\end{equation}
asymptotically as $m\rightarrow \infty$ under appropriate de-localization conditions on the Poisson parameters $\{\mu_{i,j}\}_{j=1}^m$. Therefore, we expect the noise magnitude $\Vert \eta_i \Vert_2^2$ to be close to $1/c_i$, which is the inverse of the total gene expression counts for cell $i$. Figure~\ref{fig: estimated noise vs. Poisson noise scRNAseq} depicts the estimated noise magnitudes $\hat{N}_i$ computed with $\epsilon = 2\cdot 10^{-5}$ and $s=2$, versus $1/c_i$ for a prototypical subsampled dataset (with $3000$ cells total, $500$ from each of six different types). Evidently, the Poisson model suggests that the noise magnitude fluctuates considerably across the data, roughly by an order of magnitude. Of course, the noise magnitude estimator $\hat{N}_i$ is completely oblivious to the Poisson model and does not have access to the total counts $c_i$ (as it is determined solely from the normalized observations). Nonetheless, the estimated noise magnitudes  $\hat{N}_i$ concentrate around the red dashed diagonal line, showcasing good agreement with the Poisson model. Note that there seems to be a slight quadratic trend to the estimated noise magnitudes, which is in line with literature suggesting over-dispersion with respect to the standard Poisson~\cite{svensson2020droplet} (e.g., negative binomial). 
 
 In our second experiment, we employ the cell type annotations to assess the accuracy of $\hat{W}_{i,j}^{(\alpha)}$ from~\eqref{eq: robust graph Laplacian normalizations} and its traditional counterpart $\hat{P}^{(\alpha)}$ from~\eqref{eq: traditional graph Laplacian normalizations}. Since $\hat{W}_{i,j}^{(\alpha)}$ is a transition probability matrix, it describes a random walk over the cells. It is reasonable to assume that a random walk that starts at a certain cell should be unlikely to immediately transition to a cell with a different type. 
Motivated by this reasoning, we show in Figure~\ref{fig: average probability to leave cell type scRNAseq} the probability of a cell to transition to a cell with a different type, averaged over all cells $i=1,\ldots,3000$, according to $\hat{W}_{i,j}^{\alpha}$ from~\eqref{eq: robust graph Laplacian normalizations} as well as is traditional counterpart, where we used $s=2$, $\alpha=0,0.5,1$, and averaged the results over $20$ randomized trials (of subsampling cells), plotted against the bandwidth parameter $\epsilon$. Figure~\ref{fig: worst-case probability to leave cell type scRNAseq} is the same as Figure~\ref{fig: average probability to leave cell type scRNAseq} except that we averaged the aforementioned probabilities over the cells in each cell type separately and took the largest of these, namely the worst-case averaged transition probability over the six cell types.

From Figures~\ref{fig: average probability to leave cell type scRNAseq} and~\ref{fig: worst-case probability to leave cell type scRNAseq} it is evident that for large bandwidth parameters $\epsilon$, all normalizations provide similarly undesirable behavior in the form of large probabilities of transition errors, namely probabilities to transition between different cell types. As we decrease $\epsilon$, the behavior generally improves across all normalizations, but the errors made by the robust normalizations $\hat{W}^{(\alpha)}$ are consistently smaller than the traditional ones. This advantage of our normalizations is particularly evident over the traditional normalization with $\alpha = 0$, whose worst-case error (for one of the cell types) exceeds $0.4$ for all values of $\epsilon$. The traditional normalization with $\alpha=0.5$ seems to be more accurate than $\alpha = 0$ or $\alpha=1$ and provides results very similar to the robust normalizations, albeit slightly larger worst-case errors for small $\epsilon$. Recall that the traditional normalization with $\alpha=0.5$ is obtained by first performing the symmetric normalization $D_i^{-1/2} W_{i,j} D_i^{-1/2}$, where $D_i = \sum_{j=1}^n {K}_{i,j}$, and then performing a row-stochastic normalization. These steps are precisely one iteration of the accelerated scaling algorithm described in~\cite{wormell2021spectral}, which can possibly explain the advantage of $\alpha=0.5$ over the other values of $\alpha$.

\begin{figure} 
  \centering
  	{
  	\subfloat[Estimated noise vs. Poisson noise] 
  	{
    \includegraphics[width=0.29\textwidth]{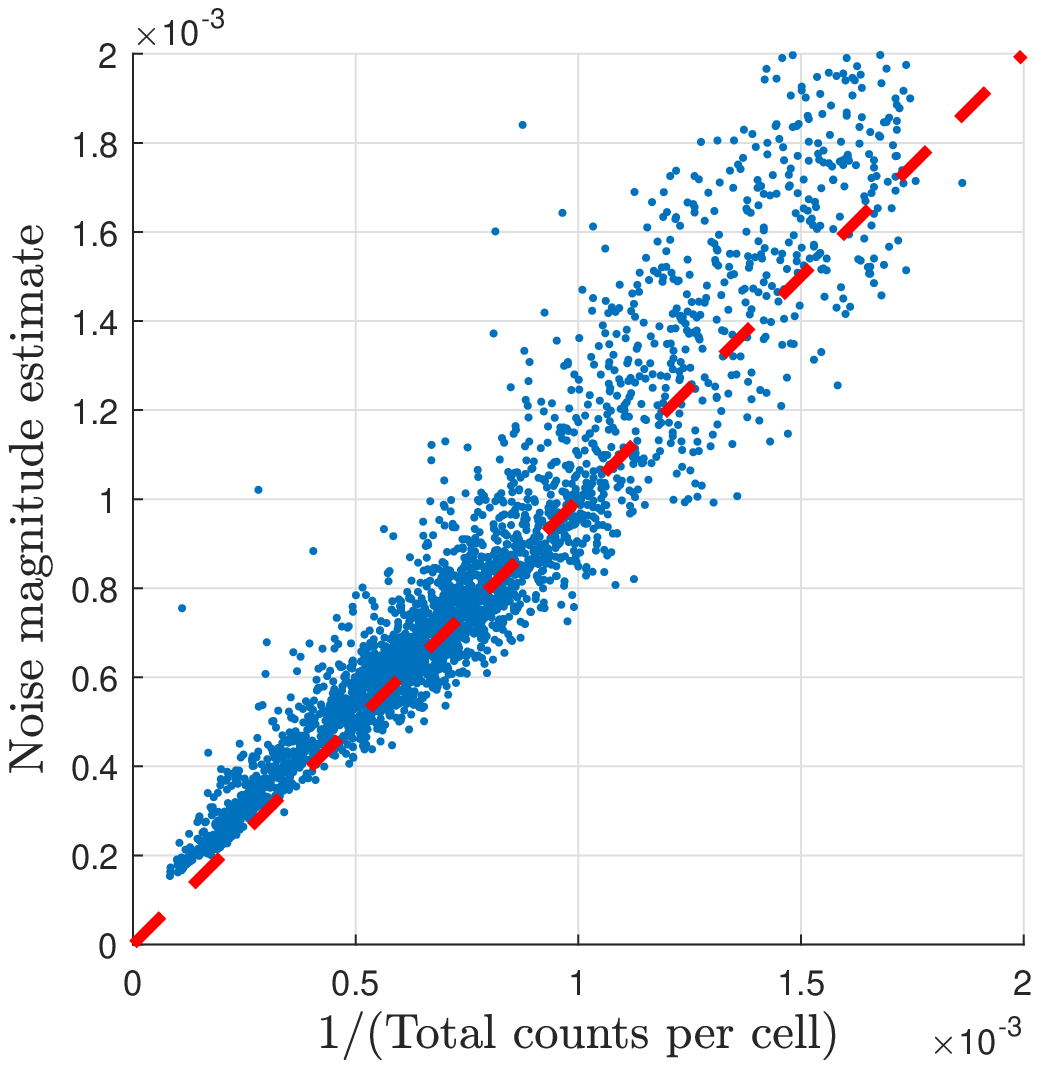}  \label{fig: estimated noise vs. Poisson noise scRNAseq}
    } 
    \hspace{15pt}
    \subfloat[Average transition probability error]  
  	{
    \includegraphics[width=0.29\textwidth]{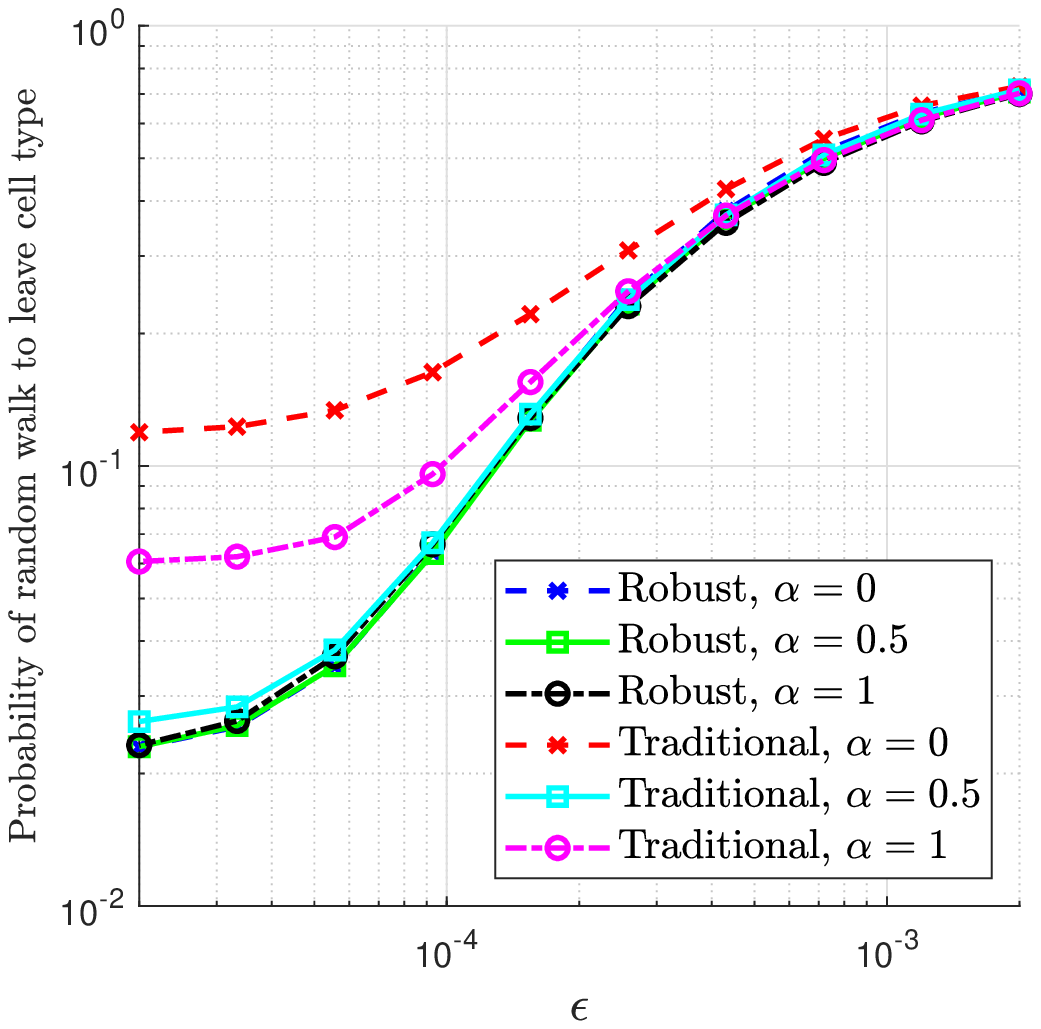} \label{fig: average probability to leave cell type scRNAseq}
    } 
    \hspace{15pt}
    \subfloat[Worst-case transition probability error]  
  	{
    \includegraphics[width=0.29\textwidth]{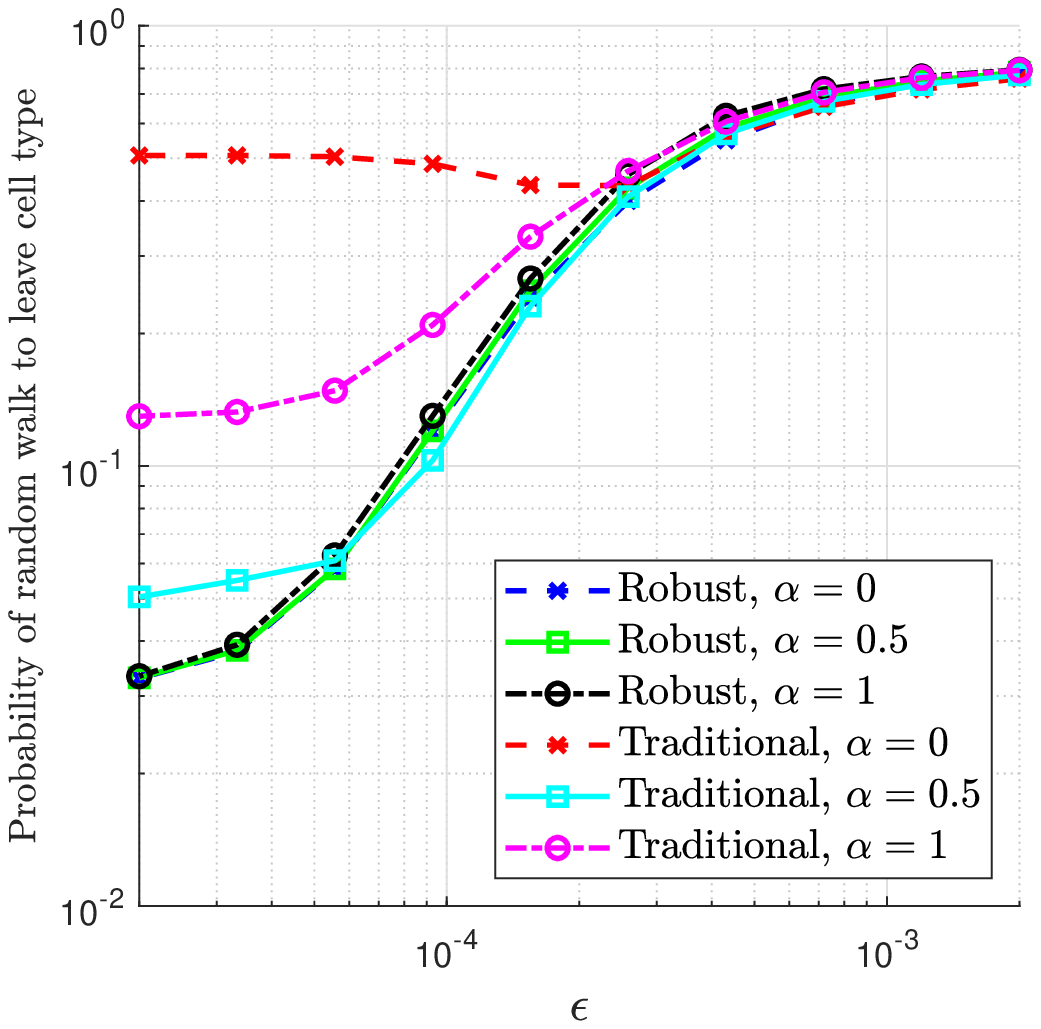} \label{fig: worst-case probability to leave cell type scRNAseq}
    }
    }
    \caption
    {Accuracy of the noise magnitude estimate $\hat{N}_i$ from~\eqref{eq: noise magnitude estimator} (panel (a)) and the Markov matrix $\hat{W}^{(\alpha)}$ from~\eqref{eq: robust graph Laplacian normalizations} as well as its traditional analogue $\hat{P}^{(\alpha)}$ from~\eqref{eq: traditional graph Laplacian normalizations} (panels (b) and (c)), evaluated using the annotated single-cell RNA-sequencing data from~\cite{zheng2017massively}. The noise magnitude estimate $\hat{N}_i$ is compared to the anticipated value from the Poisson model~\eqref{eq: predicted noise Poisson model}, while the error in the Markov matrix $\hat{W}^{(\alpha)}$ and its traditional analogue is assessed by the average probability to transition between distinct cell types (the smaller the better), plotted against the bandwidth parameter $\epsilon$.
    } \label{fig: scRNAseq example} 
    \end{figure} 

Note that all robust normalizations provide nearly identical probabilities of transition errors, which may initially seem strange in light of the different limiting operators for the corresponding graph Laplacians in Theorem~\ref{thm: Laplacian estimator}. However, an important distinction is that the graph Laplacian $L^{(\alpha)}$ only describes the first-order behavior $W^{(\alpha)}$ in $\epsilon$, while its zeroth-order behavior is given by $\sum_{j=1}^n \hat{W}^{(\alpha)}_{i,j} f(x_j) \sim f(x_i)$ for large $m,n$ and small $\epsilon$, regardless of $\alpha$. 
Hence, we should indeed expect the random walk transition probability errors to be very similar across $\alpha$, which is the case for the robust normalization. On the other hand, the random walk transition probability errors for the traditional normalization differ substantially across $\alpha$, which is likely due to the strong variability of the noise in the data (see~Figure~\ref{fig: estimated noise vs. Poisson noise scRNAseq}) and the sensitivity of the standard kernel density estimator to such noise.

\section{Discussion} \label{sec: discussion}
The results in this work give rise to several future research directions. On the practical side, to make the tools we developed in Section~\ref{sec: application to inference} widely applicable, it is desirable to derive procedures for adaptively tuning the bandwidth parameter $\epsilon$ and the parameter $s$ in the DS-KDE~\eqref{eq:density estimator formula}. Moreover, for large experimental datasets, the density can vary considerably across the sample space, where a global bandwidth parameter is unlikely to provide a satisfactory bias-variance trade-off. Hence, it is worthwhile to tune the bandwidth according to the local density around each point. Techniques for adaptive bandwidth selection have been extensively studied for standard kernel density estimation and traditional graph Laplacian normalizations in the clean case (see~\cite{zelnik2005self,berry2016variable} and references therein), e.g., by near neighbor distances. However, the adaptation of these techniques to our setting is nontrivial, as the near-neighbor distances could be too corrupted for determining the local bandwidth. Therefore, this topic requires substantial analytical and empirical investigation that is left for future work. 

On the theoretical side, one important direction is to characterize the constant $C^{'}(\epsilon)$ appearing in~\eqref{eq: variance error probabilistic bound} in terms of $\epsilon$. This would require a more advanced analysis of the stability of the scaling factors $\mathbf{d}$ under perturbations in ${W}$ and the prescribed row sums, which is beyond the scope of this work. In addition, we conjecture that the results in Theorem~\ref{thm: scaling function convergence} can be strengthened, and specifically that $\rho_{\epsilon}(x) = q^{-1/2}(x) F_{\epsilon}(x) + \mathcal{O}(\epsilon^2)$ uniformly over $x\in\mathcal{M}$. Currently, Theorem~\ref{thm: scaling function convergence} only proves an analogous $L^p$ bound for $p\in [1,4/3)$, while the pointwise bound in Theorem~\ref{thm: scaling function convergence} is only for $d\leq 5$ and is dominated by a dimension-dependent error that is worse than $\mathcal{O}(\epsilon^2)$. A useful first step in that direction might be to obtain a tighter characterization of $\rho_\epsilon$ than in our Lemma~\ref{lem: boundedness of rho}. However, this will require different theoretical tools and is left for future work.
Lastly, properly refined versions of Theorems~\ref{thm:scaling factors asym expression} and~\ref{thm: scaling function convergence} can be combined to describe how to tune the bandwidth parameter $\epsilon$ for convergence of the approximation errors described in Sections~\ref{sec: robust density estimation} and~\ref{sec: graph Laplacian estimation}.

\section*{Acknowledgement}
The authors would like to thank Ronald Coifman and Yuval Kluger for their useful comments and suggestions.
The work is supported by NSF DMS-2007040. 
The two authors acknowledge support by NIH grant R01GM131642.
B.L. acknowledges support by NIH grants UM1DA051410, U54AG076043, and U01DA053628.
X.C. is also partially supported by NSF (DMS-1818945, DMS-1820827, DMS-2134037) and the Alfred P. Sloan Foundation.

\begin{appendices}

\section{Supporting lemmas} \label{appendix: supporting lemmas}
\subsection{Concentration of inner products} \label{appendix: lemma on concentration of inner products}
\begin{lemma} 
\label{lem:noise scalar product concentration}
Under Assumptions~\ref{assump: manifold} and~\ref{assump: noise magnitude}, there exist universal constants $c,m_0,t_0>0$, such that for any $t>t_0,m>m_0$, and $i\neq j$ we have
\begin{equation}
    \operatorname{Pr} \left\{ \left\vert  \langle y_i,y_j \rangle - \langle {x}_i,{x}_j \rangle \right\vert > ct \sqrt{\log m} \cdot \operatorname{max}\{E,E^2\sqrt{m}\} \right\} \leq m^{-t}.
\end{equation}
\end{lemma}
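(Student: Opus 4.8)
The plan is to condition throughout on the clean points $x_1,\dots,x_n$, so that each $x_i$ is a fixed vector with $\Vert x_i\Vert_2\le 1$ (Assumption~\ref{assump: manifold}) while $\eta_1,\dots,\eta_n$ are independent mean-zero sub-Gaussian vectors with $\Vert\eta_i\Vert_{\psi_2}\le E$ (Assumption~\ref{assump: noise magnitude}); since the bound obtained will be uniform over the clean points, it transfers to the unconditional statement. Expanding $y_\ell=x_\ell+\eta_\ell$ yields the decomposition $\langle y_i,y_j\rangle-\langle x_i,x_j\rangle=\langle x_i,\eta_j\rangle+\langle\eta_i,x_j\rangle+\langle\eta_i,\eta_j\rangle$, and I would bound each of the three terms separately and close with a union bound. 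The two linear (signal--noise) terms will produce the scale $E$, while the bilinear (noise--noise) term will produce the scale $E^2\sqrt m$, together yielding the factor $\max\{E,E^2\sqrt m\}$ in the statement.

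For a linear term such as $\langle x_i,\eta_j\rangle$, with $x_i$ fixed this is a scalar sub-Gaussian random variable with $\Vert\langle x_i,\eta_j\rangle\Vert_{\Psi_2}\le E\Vert x_i\Vert_2\le E$, directly from the definition of the vector sub-Gaussian norm. The standard sub-Gaussian tail bound then gives $\operatorname{Pr}\{|\langle x_i,\eta_j\rangle|>v\}\le 2\exp(-cv^2/E^2)$, and choosing $v$ of order $tE\sqrt{\log m}$ makes the right-hand side at most $2m^{-c't^2}$, which lies below $m^{-t}$ once $t>t_0$ for a suitable universal $t_0$. The term $\langle\eta_i,x_j\rangle$ is identical by symmetry.

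The bilinear term is the crux, and here I would condition on $\eta_i$: for $i\neq j$ the vector $\eta_j$ is independent of $\eta_i$, so conditionally on $\eta_i$ the quantity $\langle\eta_i,\eta_j\rangle$ is sub-Gaussian in $\eta_j$ with norm at most $E\Vert\eta_i\Vert_2$, whence $\operatorname{Pr}\{|\langle\eta_i,\eta_j\rangle|>u\mid\eta_i\}\le 2\exp(-cu^2/(E^2\Vert\eta_i\Vert_2^2))$. This reduces matters to a high-probability upper bound on $\Vert\eta_i\Vert_2$. Because the coordinates of $\eta_i$ may be dependent, I deliberately avoid Hanson--Wright--type inequalities and instead control the Euclidean norm through a covering argument: taking a $1/2$-net $\mathcal N$ of the unit sphere with $|\mathcal N|\le 5^m$, one has $\Vert\eta_i\Vert_2\le 2\max_{u\in\mathcal N}\langle\eta_i,u\rangle$, and since each $\langle\eta_i,u\rangle$ is sub-Gaussian with norm at most $E$, a union bound gives $\operatorname{Pr}\{\Vert\eta_i\Vert_2>C_0E\sqrt m\}\le 2\cdot 5^m\exp(-cC_0^2 m)\le e^{-c''m}$ for a large enough universal $C_0$. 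On the complementary high-probability event $\{\Vert\eta_i\Vert_2\le C_0E\sqrt m\}$ the conditional tail becomes $2\exp(-cu^2/(C_0^2E^4 m))$, so choosing $u$ of order $tE^2\sqrt{m\log m}$ bounds it by $2m^{-c'''t^2}$.

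Combining the three estimates by a union bound (after a harmless enlargement of the threshold constant to absorb the factor of three), the total failure probability is at most $m^{-t}$, while on the good event $|\langle y_i,y_j\rangle-\langle x_i,x_j\rangle|$ is at most a constant times $t\sqrt{\log m}\,\max\{E,E^2\sqrt m\}$, as claimed; note that the sub-Gaussian tails in fact deliver the exponent $t^2$, which is exactly why the linear-in-$t$ threshold $t\sqrt{\log m}$ (rather than $\sqrt{t\log m}$) is more than sufficient. The main obstacle is precisely the bilinear noise--noise term: one cannot invoke the classical Hanson--Wright inequality since the noise coordinates need not be independent, and the resolution is the conditioning step together with the dimension-free, net-based bound on $\Vert\eta_i\Vert_2$, both of which use only the directional sub-Gaussian norm $\Vert\eta_i\Vert_{\psi_2}$ and therefore tolerate arbitrary dependence among the coordinates.
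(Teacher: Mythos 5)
Your proof follows the same skeleton as the paper's: the identical decomposition $\langle y_i,y_j\rangle-\langle x_i,x_j\rangle=\langle x_i,\eta_j\rangle+\langle\eta_i,x_j\rangle+\langle\eta_i,\eta_j\rangle$, the same sub-Gaussian tail bound for the two linear terms, and the same conditioning-on-$\eta_i$ device for the bilinear term. The one methodological difference is how you control $\Vert\eta_i\Vert_2$: a $1/2$-net argument in place of the paper's citation of the sub-Gaussian quadratic-form tail bound of Hsu--Kakade--Zhang, and that substitution is legitimate in principle. However, as written there is a genuine gap at exactly this step, and it is a quantifier gap rather than a computational one. The lemma asserts the existence of \emph{universal} constants $c,m_0,t_0$ such that the bound holds for \emph{every} $t>t_0$ and $m>m_0$; in particular $t$ may be arbitrarily large relative to $m$ (e.g.\ $t=m$, where $m^{-t}=e^{-m\log m}$). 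Your norm event uses the $t$-independent threshold $C_0E\sqrt{m}$, whose failure probability is only $e^{-c''m}$. Whenever $t> c''m/\log m$ one has $e^{-c''m}>m^{-t}$, so your concluding claim that ``the total failure probability is at most $m^{-t}$'' fails in that range: what you have proved is the lemma restricted to $t_0<t\lesssim m/\log m$, equivalently a version in which $m_0$ must depend on $t$. This is not merely cosmetic, since the paper's $\mathcal{O}_{m,n}^{(\epsilon)}$ formalism (Definition~\ref{def: order with high probability}) into which this lemma feeds has the same quantifier order: thresholds independent of $t$, bound valid for all $t>t_0$.

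The repair is small and brings you exactly in line with the paper: make the norm threshold grow with $t$. Take $r=C_0E\left(\sqrt{m}+\sqrt{t\log m}\right)$; the same net computation then gives
\[
\operatorname{Pr}\left\{\Vert\eta_i\Vert_2>r\right\}\;\leq\; 2\cdot 5^m\operatorname{exp}\left\{-c\,C_0^2\left(m+t\log m\right)/4\right\}\;\leq\; m^{-t}
\]
for a large enough universal $C_0$, uniformly in $t>t_0$ and $m>m_0$. Conditioning on $\{\Vert\eta_i\Vert_2\leq r\}$ and choosing the bilinear threshold $u\asymp E\,r\sqrt{t\log m}$, one gets $u\lesssim tE^2\sqrt{m\log m}$ (using $\sqrt{t}\leq t$ and $\log m\leq m$ to absorb the cross terms), so the final deviation level remains of order $t\sqrt{\log m}\max\{E,E^2\sqrt{m}\}$ while every failure probability is now a constant multiple of $m^{-t}$, absorbed as usual by adjusting $c$ and $t_0$. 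This is precisely what the paper accomplishes by invoking the Hsu--Kakade--Zhang bound with $\delta=m^{-t}$, which yields the $t$-dependent radius $r=E(c_1\sqrt{m}+c_2\sqrt{t\log m})$; your net argument delivers the same radius with the same probability, so with this one change your proof is complete and self-contained.
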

\begin{proof}
Let us write
\begin{equation}
    \langle y_i,y_j \rangle = \langle {x}_i,{x}_j \rangle + \langle {\eta}_i,{x}_j \rangle + \langle {x}_i,\eta_j \rangle + \langle \eta_i,\eta_j \rangle. \label{eq:noisy inner product decomposition}
\end{equation}
Conditioning on $x_1,\ldots,x_n$, the random variable $\langle {\eta}_i,{x}_j \rangle$, for each $i=1,\ldots,n$, is sub-Gaussian with
\begin{equation}
    \Vert \langle {\eta}_i,{x}_j \rangle \Vert_{\psi_2}
    \leq \Vert x_j \Vert_2 \sup_{\Vert y \Vert_2=1} \Vert \langle {\eta}_i,y \rangle \Vert_{\psi_2} \leq E,
\end{equation}
where we also used the definition of sub-Gaussian norm~\cite{vershynin2018high} and the fact that $\Vert x_i \Vert_2 \leq 1$. Therefore, according to Proposition 2.5.2 in~\cite{vershynin2018high}, we have
\begin{equation}
    \operatorname{Pr}\{ \vert \langle {\eta}_i,{x}_j \rangle \vert > \tau \} 
    \leq 2\operatorname{exp}\left\{ - \frac{\tau^2}{c^2 \Vert \langle {\eta}_i,{x}_j \rangle \Vert_{\psi_2}^2} \right\} 
    \leq 2\operatorname{exp}\left\{ -\frac{\tau^2}{c^2 E^2} \right\},
\end{equation}
where $c$ is a universal constant. Taking $\tau = c E  \sqrt{ t \log m}$, for any $t>0$, shows that 
\begin{equation}
    \operatorname{Pr}\{ \vert \langle {\eta}_i,{x}_j \rangle \vert > c E  \sqrt{ t \log m} \} 
    \leq 2 m^{-t}. \label{eq: langle eta, x rangle bound }
\end{equation}
Analogously, the above holds when replacing $\langle {\eta}_i,{x}_j \rangle$ with $\langle {x}_i,\eta_j \rangle$. Since~\eqref{eq: langle eta, x rangle bound } holds conditionally on any realization of $x_1,\ldots,x_{n}$, it also holds unconditionally. Next, conditioning on $\eta_i$ for any $i\in \{1,\ldots,n\}$, the random variable $\langle \eta_i,\eta_j \rangle$ for $j\neq i$ is also sub-Gaussian, where
\begin{equation}
    \bigg \Vert \left( \langle \eta_i,\eta_j \rangle \; \bigg \vert \; \eta_i = h \right) \bigg \Vert_{\psi_2} \leq \Vert h \Vert_2 \sup_{\Vert y \Vert_2=1} \Vert \langle y, {\eta}_j \rangle \Vert_{\psi_2} \leq {E \Vert h \Vert_2}.
\end{equation}
Hence, by the definition of a sub-Gaussian random variable
\begin{equation}
    \operatorname{Pr}\left\{ \vert \langle {\eta}_i,{\eta}_j \rangle \vert > \tau \; \bigg\vert \; \Vert \eta_i \Vert_2 \leq r \right\}
    \leq 2\operatorname{exp}\left\{ -\frac{\tau^2}{c^2 \bigg\Vert \left( \langle {\eta}_i,\eta_j \rangle \; \bigg\vert \; \Vert \eta_i \Vert_2 \leq r \right) \bigg\Vert_{\psi_2}^2} \right\} 
    \leq 2\operatorname{exp}\left\{ -\frac{\tau^2}{c^2 E^2 r^2} \right\},
\end{equation}
and taking $\tau = c r E \sqrt{ t \log m }$, for any $t>0$, gives
\begin{equation}
    \operatorname{Pr}\left\{ \vert \langle {\eta}_i,{\eta}_j \rangle \vert > c r E \sqrt{ {t \log m}} \; \bigg\vert \; \Vert \eta_i \Vert_2 \leq r \right\}
    \leq 2 m^{-t}.
\end{equation}
We can now write
\begin{align}
    \operatorname{Pr}\left\{ \vert \langle {\eta}_i,{\eta}_j \rangle \vert >  c r E \sqrt{ {t \log m}} \right\} 
    &= \operatorname{Pr}\left\{ \vert \langle {\eta}_i,{\eta}_j \rangle \vert > c r E \sqrt{ {t \log m}} \; \bigg\vert \; \Vert \eta_i \Vert_2 \leq r \right\} \operatorname{Pr}\{  \Vert \eta_i \Vert_2 \leq r \} \nonumber \\
    &+ \operatorname{Pr}\left\{ \vert \langle {\eta}_i,{\eta}_j \rangle \vert > c r E \sqrt{ {t \log m}} \; \bigg\vert \; \Vert \eta_i \Vert_2 > r \right\} \operatorname{Pr}\{  \Vert \eta_i \Vert_2 > r \} \nonumber \\
    &\leq 2m^{-t} + \operatorname{Pr}\{  \Vert \eta_i \Vert_2 > r \}.
\end{align}
Therefore, we need a probabilistic bound on $\Vert \eta_i \Vert_2$. Applying Theorem 2.1 in~\cite{hsu2012tail} to the sub-Gaussian random vector $\eta_i$ tells us that
\begin{equation}
    \operatorname{Pr}\left\{ \Vert \eta_i \Vert_2  > \Vert \eta_i \Vert_{\psi_2} \left( c_1 \sqrt{m} + c_2 \sqrt{ \log(1/\delta)}\right)\right\} \leq \delta,
\end{equation}
for any $\delta\in (0,1)$, where $c_1,c_2>0$ are universal constants. Taking $\delta = m^{-t}$ and letting $r = E ({c_1 \sqrt{m}} + c_2\sqrt{ t \log m})$, we arrive at
\begin{equation}
    \operatorname{Pr}\left\{ \vert \langle {\eta}_i,{\eta}_j \rangle \vert > c E^2 \sqrt{{t \log m}} \left( {c_1 \sqrt{m}} + c_2 \sqrt{{ t \log m}} \right)\right\} \leq 3m^{-t}, \label{eq:bound for langle eta_i , eta_j, rangle}
\end{equation}
for any $t>0$. Combining~\eqref{eq:bound for langle eta_i , eta_j, rangle},~\eqref{eq: langle eta, x rangle bound },~\eqref{eq:noisy inner product decomposition}, and applying the union bound, provides the required result.
\end{proof}

\subsection{Stability of scaling factors}
\begin{lemma} 
\label{lem:closeness of scaling factors}
Let ${A}\in\mathbb{R}^{n\times n}$, $n\geq 3$, be a symmetric nonnegative matrix with zero main diagonal and strictly positive off-diagonal entries. Denote by $\mathbf{v} = [v_1,\ldots,v_n]>0$ the unique vector for which $D(\mathbf{v}) A D(\mathbf{v})$ is doubly stochastic, and suppose that there exist $\varepsilon\in (0,1)$ and a vector $\hat{\mathbf{v}} = [\hat{v}_1,\ldots,\hat{v}_n] > 0$ such that
\begin{equation}
\left\vert \sum_{j=1}^n \hat{v}_i {A}_{i,j} \hat{v}_j - 1 \right\vert \leq \varepsilon, \label{eq:approximate scaling condition in lemma}
\end{equation}
for all $i=1,\ldots,n$. Then, 
\begin{align}
\sqrt{\frac{1 - \delta}{1+\varepsilon}} \leq \frac{{v}_i}{\hat{v}_i} \leq \sqrt{\frac{1}{(1-\delta)(1-\varepsilon)}},
\end{align}
for all $i=1,\ldots,n$, where 
\begin{equation}
    \delta = \min \left\{ 1, \frac{4 \varepsilon }{ (n-2) \min_{i,j = 1,\ldots,n, i\neq j} \left\{ \hat{v}_i A_{i,j} \hat{v}_j \right\} } \right\}. \label{eq: delta def}
\end{equation}
\end{lemma}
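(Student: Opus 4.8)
The plan is to reduce the problem to controlling the spread of the scaling factors of a rescaled matrix, and then to bound that spread using the entrywise positivity constant $b := \min_{i\neq j}\hat v_i A_{i,j}\hat v_j$. Concretely, I would set $B_{i,j} = \hat v_i A_{i,j}\hat v_j$ and $u_i = v_i/\hat v_i$. Then $D(\mathbf v)AD(\mathbf v) = D(\mathbf u)BD(\mathbf u)$, so $\mathbf u>0$ is exactly the vector making $B$ doubly stochastic, i.e.\ $u_i\sum_j B_{i,j}u_j = 1$ for every $i$, while the hypothesis~\eqref{eq:approximate scaling condition in lemma} says precisely that the row sums $R_i := \sum_j B_{i,j}$ satisfy $R_i\in[1-\varepsilon,1+\varepsilon]$. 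The claimed bounds are equivalent to $\sqrt{(1-\delta)/(1+\varepsilon)}\le u_i\le \sqrt{1/((1-\delta)(1-\varepsilon))}$ for all $i$, so it suffices to control $M := \max_i u_i$ and $\mu := \min_i u_i$; we may assume $M>\mu$, as otherwise the ratios are all equal and the statement is immediate.

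First I would prove a product bound on $M\mu$. Letting $p,q$ be indices attaining $M,\mu$, the exact constraints at $p$ and $q$, combined with $\mu\le u_j\le M$ and $R_i\in[1-\varepsilon,1+\varepsilon]$, give $1 = M\sum_j B_{p,j}u_j\ge M\mu R_p$ and $1 = \mu\sum_j B_{q,j}u_j\le \mu M R_q$, hence $\tfrac{1}{1+\varepsilon}\le M\mu\le\tfrac{1}{1-\varepsilon}$; equivalently $\tfrac1M\le\mu(1+\varepsilon)$ and $\tfrac1\mu\ge M(1-\varepsilon)$.

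The heart of the argument is a lower bound on the ratio $\mu/M$. Using the zero diagonal, I would rewrite the two extreme constraints as $MR_q - \tfrac1\mu = \sum_{j\neq q}B_{q,j}(M-u_j)$ and $\tfrac1M - \mu R_p = \sum_{j\neq p}B_{p,j}(u_j-\mu)$, where every summand is nonnegative. Bounding each entry below by $b$ and invoking the product bound on the right-hand sides yields $b\sum_{j\neq q}(M-u_j)\le MR_q - \tfrac1\mu\le 2M\varepsilon$ and $b\sum_{j\neq p}(u_j-\mu)\le \tfrac1M-\mu R_p\le 2\mu\varepsilon$. Adding these two one-sided sums is the decisive move: the term $j=p$ in the first sum and the term $j=q$ in the second both vanish, while each of the remaining $n-2$ indices contributes $(M-u_j)+(u_j-\mu)=M-\mu$. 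This gives $(n-2)(M-\mu)\le 2(M+\mu)\varepsilon/b\le 4M\varepsilon/b$, so $1-\mu/M\le 4\varepsilon/((n-2)b)$; taking the minimum with $1$ (the bound $\mu/M\ge 0$ being automatic) yields $\mu/M\ge 1-\delta$.

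Finally I would assemble the pieces: $M^2 = (M\mu)(M/\mu)\le \tfrac{1}{(1-\varepsilon)(1-\delta)}$ and $\mu^2 = (M\mu)(\mu/M)\ge\tfrac{1-\delta}{1+\varepsilon}$, which are exactly the asserted bounds on $\max_i u_i$ and $\min_i u_i$, and hence on every $u_i = v_i/\hat v_i$. The step I expect to be the crux is the addition of the two one-sided sums in the previous paragraph: it is precisely there that the extreme terms cancel and the factor $n-2$ emerges. This simultaneously resolves the structural difficulty that the zero diagonal removes the extreme entry $u_p$ (resp.\ $u_q$) from its own constraint, and is where the sharpening over the analysis of~\cite{landa2021doubly} enters.
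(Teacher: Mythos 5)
Your proposal is correct and follows essentially the same route as the paper's proof: the product bound $\tfrac{1}{1+\varepsilon}\leq M\mu\leq\tfrac{1}{1-\varepsilon}$ from the argmax/argmin row constraints, the two one-sided deviation sums obtained from those same rows (using the zero diagonal and the entrywise lower bound $b$), their addition to produce the factor $(n-2)(M-\mu)$ via cancellation of the extreme terms, and the final assembly $M^2=(M\mu)(M/\mu)$, $\mu^2=(M\mu)(\mu/M)$. The only differences are cosmetic (you work with the unnormalized quantities $MR_q-\tfrac{1}{\mu}$ and $\tfrac{1}{M}-\mu R_p$, whereas the paper first divides through by $\min_i u_i$ and then rescales), so the two arguments are the same proof.
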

\begin{proof}
Let us define
\begin{equation}
    P_{i,j} = v_i A_{i,j} v_j, \qquad
    \hat{P}_{i,j} = \hat{v}_i A_{i,j} \hat{v}_j, \qquad  
    u_i = \frac{{v}_i}{\hat{v}_i}, \qquad \ell = \operatorname{argmin}_i u_i, \qquad k = \operatorname{argmax}_i u_i. \label{eq:u def}
\end{equation}
Using~\eqref{eq:approximate scaling condition in lemma} we can write
\begin{equation}
    1 = \sum_{j=1}^N {P}_{\ell,j} = \sum_{j=1}^N u_\ell \hat{P}_{\ell,j} u_j \leq \min_i u_i \max_i u_i \sum_{j=1}^N \hat{P}_{\ell,j} \leq (1+\varepsilon) \min_i u_i \max_i u_i. \label{eq:min u max u lower bound}
\end{equation}
Similarly, we have
\begin{equation}
    1 = \sum_{j=1}^N {P}_{k,j} = \sum_{j=1}^N u_k \hat{P}_{k,j} u_j \geq \max_i u_i \min_i u_i \sum_{j=1}^N \hat{P}_{k,j} \geq (1-\varepsilon) \max_i u_i \min_i u_i. \label{eq: max u min u upper bound}
\end{equation}
Combining~\eqref{eq:min u max u lower bound} and~\eqref{eq: max u min u upper bound}, we obtain
\begin{equation}
    \frac{1}{1+\varepsilon} \leq \max_i u_i \min_i u_i \leq \frac{1}{1-\varepsilon}. \label{eq:max u min u upper and lower bounds}
\end{equation}
Continuing, employing~\eqref{eq:max u min u upper and lower bounds} and the fact that $u_k = \max_i u_i$, we can write
\begin{equation}
    1 = \sum_{j=1}^n {P}_{k,j} = \sum_{j=1}^n u_k \hat{P}_{k,j} u_j \geq \frac{1}{(1+\varepsilon) }\sum_{j=1}^n \hat{P}_{k,j} \frac{u_j}{\min_i u_i}, \label{eq:bounding max v - min v step 1}
\end{equation}
implying that
\begin{equation}
    \sum_{j=1}^n \hat{P}_{k,j} \left( \frac{u_j}{\min_i u_i} -1 \right) \leq {1+\varepsilon} - \sum_{j=1}^n \hat{P}_{k,j} \leq {2\varepsilon}. \label{eq:preliminary bound 1}
\end{equation}
Multiplying~\eqref{eq:preliminary bound 1} by $\min_i u_i /\min_{j\neq k}  \hat{P}_{k,j}$ and using the fact that $\hat{P}_{k,k} = 0$, it follows that
\begin{equation}
  \sum_{j=1,\; j\neq k}^n ({u_j} - \min_i u_i ) \leq  \sum_{j=1}^n \frac{\hat{P}_{k,j}}{\min_{j\neq k} \hat{P}_{k,j} } ({u_j} - \min_i u_i ) \leq \frac{2\varepsilon\min_i u_i}{\min_{j\neq k} \hat{P}_{k,j} } \leq  \frac{2\varepsilon\max_i u_i}{\min_{j\neq k} \hat{P}_{k,j} }. \label{eq:sum u_j - min u_i bound}
\end{equation}
We next provide a derivation analogous to~\eqref{eq:bounding max v - min v step 1}--\eqref{eq:sum u_j - min u_i bound} to obtain a bound for $\sum_{j=1,\; j\neq \ell}^n (\max_i u_i - u_j )$. 
Using~\eqref{eq:max u min u upper and lower bounds} and the fact that $u_\ell = \min_i u_i$, we can write
\begin{equation}
    1 = \sum_{j=1}^n {P}_{\ell,j} = \sum_{j=1}^n u_\ell \hat{P}_{\ell,j} u_j \leq \frac{1}{(1-\varepsilon) }\sum_{j=1}^n \hat{P}_{\ell,j} \frac{u_j}{\max_i u_i}, 
\end{equation}
implying that
\begin{equation}
    \sum_{j=1}^n \hat{P}_{\ell,j} \left( \frac{u_j}{\max_i u_i} -1 \right) \geq {1-\varepsilon} - \sum_{j=1}^n \hat{P}_{\ell,j} \geq {-2\varepsilon}.
\end{equation}
Multiplying the above by $-\max_i u_i /\min_{j\neq \ell}  \hat{P}_{\ell,j}$ and using the fact that $\hat{P}_{\ell,\ell} = 0$, it follows that
\begin{equation}
  \sum_{j=1,\; j\neq \ell}^n (\max_i u_i - {u_j}  ) \leq  \sum_{j=1}^n \frac{\hat{P}_{\ell,j}}{\min_{j\neq \ell} \hat{P}_{\ell,j} } (\max_i u_i - {u_j}  ) \leq \frac{2\varepsilon\max_i u_i}{\min_{j\neq \ell} \hat{P}_{\ell,j} }. \label{eq:sum max v_i - u_j bound}
\end{equation}
Next, summing~\eqref{eq:sum u_j - min u_i bound} and~\eqref{eq:sum max v_i - u_j bound} gives
\begin{equation}
     (n-2) \left(\max_i u_i - \min_i u_i \right) \leq \frac{4 \varepsilon \max_i u_i }{\min_{i\neq j} \hat{P}_{i,j}},
\end{equation}
while it is also clear that
\begin{equation}
    (n-2) \left(\max_i u_i - \min_i u_i \right) \leq (n-2) \max_i u_i,
\end{equation}
since $\min_i u_i > 0$. Therefore, we have
\begin{equation}
    0 \leq 1 - \frac{\min_i u_i}{\max_i u_i} \leq \delta,
\end{equation}
or equivalently,
\begin{equation}
    1-\delta \leq \frac{\min_i u_i}{\max_i u_i} \leq 1, \label{eq: minu maxu ratio bound}
\end{equation}
where $\delta$ is from~\eqref{eq: delta def}. 
Finally, combining~\eqref{eq: minu maxu ratio bound} with~\eqref{eq:max u min u upper and lower bounds}, after some manipulation tells us that
\begin{equation}
    \sqrt{\frac{1 - \delta}{1+\varepsilon}} \leq \min_i u_i \leq u_j \leq \max_i u_i \leq \sqrt{\frac{1}{(1-\delta)(1-\varepsilon)}},
\end{equation}
for all $j=1,\ldots,n$. 
\end{proof}

\subsection{Boundedness of the scaling function $\rho_{\epsilon}(x)$}
\begin{lemma} 
\label{lem: boundedness of rho}
Under assumption~\ref{assump: manifold}, there exist constants $\epsilon_0,C>0$ that depend only on $\mathcal{M}$ and $q$, such that $\vert \{ x\in\mathcal{M}: \; \rho_{\epsilon}(x)>t \} \vert \leq C/t^2$ for all $\epsilon\leq \epsilon_0$ and $t>0$.
\end{lemma}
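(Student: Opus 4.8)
The plan is to exploit the pointwise form of the scaling equation together with the positivity of $\rho_\epsilon$, and to bound not the measure of the super-level set directly but its truncated first moment. Writing $S_t=\{x\in\mathcal{M}:\rho_\epsilon(x)>t\}$, the elementary inequality $t\,\vert S_t\vert\le\int_{S_t}\rho_\epsilon\,d\mu$ reduces the claim to establishing $\int_{S_t}\rho_\epsilon\,d\mu\le C/t$ for a constant $C$ depending only on $\mathcal{M}$ and $q$. Rearranging~\eqref{eq:integral scaling eq with density} gives the pointwise identity $\frac{1}{(\pi\epsilon)^{d/2}}\int_\mathcal{M}\mathcal{K}_\epsilon(x,y)\rho_\epsilon(y)q(y)\,d\mu(y)=1/\rho_\epsilon(x)$ for every $x\in\mathcal{M}$, which is the only input I take from the integral equation.

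First I would derive a local mass bound. Let $q_{\min}=\min_{\mathcal{M}}q>0$ (positive by continuity and compactness of $\mathcal{M}$), and fix $x\in\mathcal{M}$. Restricting the integral above to the geodesic ball $B(x,\sqrt\epsilon)$, on which $\Vert x-y\Vert_2\le\sqrt\epsilon$ forces $\mathcal{K}_\epsilon(x,y)\ge e^{-1}$, and using $q\ge q_{\min}$ together with $\rho_\epsilon>0$, I obtain
\[
\frac{1}{\rho_\epsilon(x)}\ge\frac{e^{-1}q_{\min}}{(\pi\epsilon)^{d/2}}\int_{B(x,\sqrt\epsilon)}\rho_\epsilon(y)\,d\mu(y),
\]
that is, $\int_{B(x,\sqrt\epsilon)}\rho_\epsilon\,d\mu\le e(\pi\epsilon)^{d/2}/(q_{\min}\rho_\epsilon(x))$. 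In particular, whenever $\rho_\epsilon(x)>t$, the mass of $\rho_\epsilon$ on the surrounding $\sqrt\epsilon$-ball is at most $e(\pi\epsilon)^{d/2}/(q_{\min}t)$.

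Next I would set up the covering. I choose a maximal $\sqrt\epsilon$-separated subset $\{x_k\}_{k=1}^N\subset S_t$ (finite by compactness); by maximality $S_t\subseteq\bigcup_k B(x_k,\sqrt\epsilon)$, while the half-radius balls $B(x_k,\sqrt\epsilon/2)$ are pairwise disjoint. Summing the local bound over the cover gives $\int_{S_t}\rho_\epsilon\,d\mu\le N\,e(\pi\epsilon)^{d/2}/(q_{\min}t)$. To bound $N$, I would invoke the bounded geometry of $\mathcal{M}$ guaranteed by Assumption~\ref{assump: manifold} (compact, smooth, closed), which yields a uniform volume comparison $\vert B(x,r)\vert\ge c_0 r^d$ for all $x\in\mathcal{M}$ and all $r\le r_0$; taking $\epsilon_0=r_0^2$ and using disjointness, $N c_0(\sqrt\epsilon/2)^d\le\vert\mathcal{M}\vert$, so $N\le C_2\epsilon^{-d/2}$. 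The factors $\epsilon^{d/2}$ then cancel, leaving $\int_{S_t}\rho_\epsilon\,d\mu\le C/t$ with $C$ depending only on $d$, $q_{\min}$, and $\vert\mathcal{M}\vert$, which completes the argument via $t\,\vert S_t\vert\le\int_{S_t}\rho_\epsilon\,d\mu$.

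The conceptual crux is the reduction to the truncated first moment. Bounding $\vert S_t\vert$ through $\int\rho_\epsilon^2$ fails, since the positive-definite quadratic form $\langle\rho_\epsilon q,\mathcal{K}_\epsilon(\rho_\epsilon q)\rangle=(\pi\epsilon)^{d/2}$ obtained by integrating~\eqref{eq:integral scaling eq with density} against $q\,d\mu$ controls only a smoothed $L^2$ norm and not the full one; high-frequency spikes escape it. The essential gain comes instead from using the positivity of $\rho_\epsilon$ to lower-bound the averaging operator by a local ball integral, converting ``$\rho_\epsilon(x)$ large'' into ``little mass nearby''. The main obstacle I anticipate is therefore not analytic but geometric bookkeeping: securing the $x$- and $\epsilon$-uniform volume comparisons for geodesic balls of radius $\sqrt\epsilon$, and reconciling them with the Euclidean distance in $\mathcal{K}_\epsilon$ (harmless, since Euclidean distance is dominated by geodesic distance on $\mathcal{M}$). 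All constants produced this way depend only on $\mathcal{M}$ and $q$, as required.
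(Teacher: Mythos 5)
Your proof is correct, and it rests on the same core mechanism as the paper's: the pointwise identity $1/\rho_\epsilon(x) = (\pi\epsilon)^{-d/2}\int_\mathcal{M}\mathcal{K}_\epsilon(x,y)\rho_\epsilon(y)q(y)\,d\mu(y)$, the lower bound $q\geq q_{\min}$, a kernel lower bound on balls of radius $\sim\sqrt\epsilon$, and a maximal packing with $O(\epsilon^{-d/2})$ balls so that the $(\pi\epsilon)^{d/2}$ factors cancel. Where you genuinely differ is in how the two powers of $t$ are harvested. The paper restricts the integral in the identity to the superlevel set $\Omega_\epsilon^t$ itself, so that $\rho_\epsilon(y)>t$ inside the integral combines with $1/\rho_\epsilon(x)<1/t$ on the left to give $\int_{\Omega_\epsilon^t}\mathcal{K}_\epsilon(x,y)\,d\mu(y)\leq (\pi\epsilon)^{d/2}/(ct^2)$ in one stroke; the covering step (Euclidean balls $B_{2\sqrt\epsilon}(x_i')$, on which $\mathcal{K}_\epsilon\geq e^{-4}$) then converts this kernel mass directly into a bound on $\vert\Omega_\epsilon^t\vert$. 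You instead restrict to the local geodesic ball $B(x,\sqrt\epsilon)$, obtaining the local mass bound $\int_{B(x,\sqrt\epsilon)}\rho_\epsilon\,d\mu\leq e(\pi\epsilon)^{d/2}/(q_{\min}t)$ (one power of $t$), and recover the second power from Markov's inequality $t\vert S_t\vert\leq\int_{S_t}\rho_\epsilon\,d\mu$. The two routes are equivalent up to constants -- by the layer-cake formula, the measure bound $\vert S_s\vert\lesssim 1/s^2$ for all $s$ implies the tail-moment bound $\int_{S_t}\rho_\epsilon\,d\mu\lesssim 1/t$, and Markov gives the converse -- and both need exactly the same geometric input (the uniform lower bound on the volume of $\sqrt\epsilon$-balls, which handles your anticipated obstacle and is the paper's $\overline{c}\epsilon^{d/2}$ bound). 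What your version buys is a slightly cleaner modular structure: the local mass bound is a standalone statement about $\rho_\epsilon$ near any point, with the superlevel condition entering only through Markov at the end; what the paper's buys is directness, bounding the measure without passing through the truncated first moment.
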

\begin{proof}
Let $\Omega_\epsilon^t := \{ x\in\mathcal{M}: \; \rho_{\epsilon}(x)>t \}$. For any $x\in \Omega_\epsilon^t$, we can write
\begin{align}
    \frac{1}{t} &> \frac{1}{\rho_{\epsilon}(x)}  = \frac{1}{(\pi \epsilon)^{d/2}}\int_{\mathcal{M}} \mathcal{K}_{\epsilon}(x,y) \rho_{\epsilon}(y) q(y) d\mu(y) \geq \frac{1}{(\pi \epsilon)^{d/2}}\int_{\Omega_\epsilon^t} \mathcal{K}_{\epsilon}(x,y) \rho_{\epsilon}(y) q(y) d\mu(y) \nonumber \\
    &\geq \frac{c t}{(\pi \epsilon)^{d/2}} \int_{\Omega_\epsilon^t} \mathcal{K}_{\epsilon}(x,y) d\mu(y), \label{eq:local kernel intergal on unbounded domain}
\end{align}
where $c := \min_{x\in\mathcal{M}}q(x) > 0$ by the positivity and continuity of $q(x)$. Next, let $B_r(x)$ be a ball in $\mathbb{R}^m$ of radius $r$ and center $x\in\mathbb{R}^m$, and consider a sequence of points $x_1^{'},\ldots,x_{\kappa_\epsilon^t}^{'} \in \Omega_\epsilon^t$ such that the balls $\{ B_{\sqrt{\epsilon}}(x_i^{'}) \}$ are disjoint. We take $\kappa_\epsilon^t$ to be the largest integer that allows for this property, noting that such a maximum exists since $\Omega_\epsilon^t \subseteq \mathcal{M} \subset B_1(0)$. Then, $\cup_{i=1}^{\kappa_\epsilon^t} B_{2\sqrt{\epsilon}}(x_i^{'})$ is a covering of $\Omega_\epsilon^t$, as otherwise it would be a contradiction to maximality of $\kappa_\epsilon^t$. We have
\begin{align}
    \vert \Omega_\epsilon^t \vert &= \int_{\Omega_\epsilon^t} d\mu(y) \leq \sum_{i=1}^{\kappa_\epsilon^t} \int_{\Omega_\epsilon^t \cap B_{2\sqrt{\epsilon}}(x_i^{'})} d\mu(y) 
    \leq e^4 \sum_{i=1}^{\kappa_\epsilon^t} \int_{\Omega_\epsilon^t \cap B_{2\sqrt{\epsilon}}(x_i^{'})} \mathcal{K}_{\epsilon}(x,y) d\mu(y) \nonumber \\
    &\leq e^4 \sum_{i=1}^{\kappa_\epsilon^t} \int_{\Omega_\epsilon^t} \mathcal{K}_{\epsilon}(x,y) d\mu(y) \leq \frac{e^4 (\pi \epsilon)^{d/2}}{c t^2} \kappa_\epsilon^t, \label{eq:Omega set upper bound}
\end{align} 
where we used~\eqref{eq:local kernel intergal on unbounded domain} in the last inequality. Let $\overline{\kappa}_\epsilon$ be the largest integer for which there is a sequence of points $\overline{x}_1,\ldots,\overline{x}_{\overline{\kappa}_\epsilon} \in \mathcal{M}$ such that $\{ B_{\sqrt{\epsilon}}(\overline{x}_i) \}$ are disjoint. Certainly, we have $\kappa_\epsilon^t \leq \overline{\kappa}_\epsilon$. Since $\mathcal{M}$ is a smooth and compact Riemannian manifold with no boundary and intrinsic dimension $d$, there exist constants $\overline{c},\epsilon_0>0$ that depend only on $\mathcal{M}$, such that 
\begin{equation}
    \kappa_\epsilon^t \leq \overline{\kappa}_\epsilon \leq \frac{\vert \mathcal{M} \vert}{\min_{x\in \mathcal{M}}\vert \mathcal{M} \cap B_{\sqrt{\epsilon}}(x) \vert } 
    \leq \frac{\vert \mathcal{M} \vert}{ \overline{c} \epsilon^{d/2} },
\end{equation}
for all $\epsilon \leq \epsilon_0$. Plugging the above into~\eqref{eq:Omega set upper bound} completes the proof.

\end{proof}

\subsection{Gaussian kernel integral asymptotic expansion} \label{appendix: diffusion maps lemma}
\begin{lemma} [Lemma 8 in~\cite{coifman2006diffusionMaps}] \label{lem: diffusion maps lemma}
Let $f\in \mathcal{C}^3(\mathcal{M})$. Under Assumption~\ref{assump: manifold}, there exists a smooth function $\omega(x):\mathcal{M}\rightarrow \mathbb{R}$ that depends only on the geometry of $\mathcal{M}$, such that for all $x\in \mathcal{M}$,
\begin{equation}
    \frac{1}{(\pi \epsilon)^{d/2}}\int_\mathcal{M} \mathcal{K}_{\epsilon}(x,y) f(y) d\mu(y) = f(x) + \frac{\epsilon}{4}\left[ \omega(x)f(x) - \Delta_\mathcal{M}\{f\}(x) \right] +\mathcal{O}(\epsilon^2).  
\end{equation}
\end{lemma}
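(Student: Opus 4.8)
The plan is to exploit the rapid decay of the Gaussian kernel to localize the integral near $x$, pass to geodesic normal coordinates, and reduce the claim to a finite collection of Gaussian moment integrals. First I would localize: since $\mathcal{K}_{\epsilon}(x,y) = \operatorname{exp}\{-\Vert x-y\Vert_2^2/\epsilon\}$ and the ambient distance is comparable to the geodesic distance $d_{\mathcal{M}}(x,y)$ on a compact smooth manifold, the contribution of $\{y : d_{\mathcal{M}}(x,y) > \epsilon^{1/2-\delta}\}$ is bounded by $C\operatorname{exp}\{-c\,\epsilon^{-2\delta}\}$, which is $\mathcal{O}(\epsilon^k)$ for every $k$ and is absorbed into the $\mathcal{O}(\epsilon^2)$ remainder. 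Thus I restrict to a fixed geodesic ball $B_r(x)$ with $r$ below the injectivity radius of $\mathcal{M}$ and introduce coordinates $u\in\mathbb{R}^d$ via the exponential map $y=\exp_x(u)$.

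In these normal coordinates I would expand the three ingredients to the order dictated by the effective scale $\vert u\vert\sim\sqrt{\epsilon}$. The volume form satisfies $d\mu(y)=\sqrt{\det g(u)}\,du$ with $\sqrt{\det g(u)} = 1 - \tfrac{1}{6}\operatorname{Ric}_{ij}(x)u^iu^j + \mathcal{O}(\vert u\vert^3)$; the ambient squared distance relates to the intrinsic one by $\Vert x-y\Vert_2^2 = \vert u\vert^2 - \tfrac{1}{12}G_x(u) + \mathcal{O}(\vert u\vert^6)$, where $G_x$ is the quartic form built from the second fundamental form of the embedding at $x$; and $f$ has Taylor expansion $f(\exp_x(u)) = f(x) + \langle\nabla f, u\rangle + \tfrac12\operatorname{Hess}f(u,u) + R_3(u)$, where the trace $\sum_i\partial^2_{u_iu_i}f\vert_{0}$ equals $-\Delta_{\mathcal{M}}\{f\}(x)$ under the paper's sign convention and $R_3(u)=\mathcal{O}(\vert u\vert^3)$.

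I would then substitute $\mathcal{K}_{\epsilon} = e^{-\vert u\vert^2/\epsilon}\operatorname{exp}\{G_x(u)/(12\epsilon) + \mathcal{O}(\vert u\vert^6/\epsilon)\}$ and expand the second exponential as $1 + G_x(u)/(12\epsilon) + \mathcal{O}(\epsilon)$, noting $\vert u\vert^4/\epsilon\sim\epsilon$ on the relevant scale. Multiplying out the expansions of $\sqrt{\det g}$, $f$, and the kernel correction and grouping by total degree in $u$, I integrate against the Gaussian using $\tfrac{1}{(\pi\epsilon)^{d/2}}\int e^{-\vert u\vert^2/\epsilon}\,du = 1$ and $\tfrac{1}{(\pi\epsilon)^{d/2}}\int e^{-\vert u\vert^2/\epsilon}u_iu_j\,du = \tfrac{\epsilon}{2}\delta_{ij}$, with odd moments vanishing and fourth moments of order $\epsilon^2$. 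The constant term yields $f(x)$; the quadratic term from $\operatorname{Hess}f$ gives $-\tfrac{\epsilon}{4}\Delta_{\mathcal{M}}\{f\}(x)$; and the $\operatorname{Ric}$ correction of the volume form together with the quartic $G_x$ contribution, both multiplying $f(x)$ and both purely geometric, assemble into $\tfrac{\epsilon}{4}\omega(x)f(x)$, which defines $\omega$ and shows it depends only on the geometry of $\mathcal{M}$.

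The main obstacle is twofold. The geometric bookkeeping — deriving the quartic expansion $\Vert x-y\Vert_2^2 = \vert u\vert^2 - \tfrac{1}{12}G_x(u) + \mathcal{O}(\vert u\vert^6)$ from the second fundamental form and checking that, after Gaussian integration, the Ricci term and the extrinsic-curvature term fuse into a single smooth $\omega(x)$ independent of $f$ — requires careful use of curvature identities. More delicate is the error estimate: naively the cubic remainder $R_3=\mathcal{O}(\vert u\vert^3)$ integrates to $\mathcal{O}(\epsilon^{3/2})$, which is too large for the stated $\mathcal{O}(\epsilon^2)$ with only $f\in\mathcal{C}^3$. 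To recover the correct order I would invoke a parity argument: writing $f$ as its exact degree-three Taylor polynomial plus remainder, I note that every odd-degree contribution integrates to zero against the even Gaussian weight (the leading corrections of $\sqrt{\det g}$ and of the distance are even), so the surviving remainder is genuinely $\mathcal{O}(\epsilon^2)$. Making this cancellation rigorous and uniform in $x$ — using compactness of $\mathcal{M}$ to bound all geometric quantities and the $\mathcal{C}^3$ norm of $f$ — is the crux of the argument.
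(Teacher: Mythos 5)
The paper does not actually prove this lemma --- it is quoted verbatim as Lemma 8 of the Diffusion Maps paper and used as a black box --- and your sketch reconstructs essentially the proof given in that cited source: localization by Gaussian decay, geodesic normal coordinates, the three expansions (volume form with the Ricci correction, ambient-versus-geodesic distance with the quartic second-fundamental-form correction, and the Taylor polynomial of $f$), Gaussian moment integration, and the assembly of the purely geometric terms into $\omega(x)$. In approach you are therefore fully aligned with the reference; one small imprecision is that the error in the distance expansion beyond the quartic term is in general $\mathcal{O}(\vert u\vert^5)$ with an odd quintic part rather than $\mathcal{O}(\vert u\vert^6)$, but your parity argument disposes of that term just as it disposes of the cubic term of $f$.

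The one step that overclaims is the final remainder estimate. Parity does annihilate the odd degree-three Taylor term, but it does not make the surviving remainder $\mathcal{O}(\epsilon^2)$ under bare $f\in\mathcal{C}^3(\mathcal{M})$: after subtracting the full degree-three Taylor polynomial, the remainder is only $o(\vert u\vert^3)$ --- controlled by the modulus of continuity of the third derivatives, not by $\vert u\vert^4$ --- so Gaussian integration yields $o(\epsilon^{3/2})$, not $\mathcal{O}(\epsilon^2)$. A concrete obstruction in $d=1$: let $f$ behave like $\vert u\vert^{3+\alpha}$ in arclength near $x$ with $\alpha\in(0,1)$ (this is $\mathcal{C}^{3,\alpha}$, hence $\mathcal{C}^3$); the function is even, so parity is powerless, and since $f$, $f'$, $f''$ vanish at $x$ the left-hand side is of exact order $\epsilon^{(3+\alpha)/2}\gg\epsilon^2$. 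The stated $\mathcal{O}(\epsilon^2)$ rate genuinely requires Lipschitz third derivatives (e.g.\ $f\in\mathcal{C}^4$), which is how the computation behind the cited lemma is actually run, and which is harmless for this paper's applications: the lemma is always applied to functions built from $q\in\mathcal{C}^6(\mathcal{M})$, precisely so that the constant in the $\mathcal{O}(\epsilon^2)$ term is uniformly bounded over $\mathcal{M}$. So either strengthen the hypothesis to $\mathcal{C}^4$ (or $\mathcal{C}^{3,1}$) in your write-up, or weaken the conclusion to $o(\epsilon^{3/2})$; with that repair, your argument is sound and matches the source.
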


\section{Proof of Theorem~\ref{thm:scaling factors asym expression} } \label{appendix: proof of theorem for concentration of scaling factors and scaled matrix}
For brevity, we will make use of the following definition.
\begin{definition} \label{def: order with high probability}
 Let $X$ be a random variable. We say that $X = \mathcal{O}_{m,n}^{(\epsilon)}\left(f(m,n)\right)$ if there exist $t_0,m_0(\epsilon),n_0(\epsilon), C^{'}({\epsilon})>0$, such that for all $m>m_0(\epsilon)$ and $n>n_0(\epsilon)$,
\begin{equation}
    |X| \leq {t} C^{'}({\epsilon}) f(m,n),
\end{equation}
with probability at least $1-n^{-t}$ for all $t>t_0$.
\end{definition}
Note that by Definition~\ref{def: order with high probability}, if we have random variables $X_i = \mathcal{O}_{m,n}^{(\epsilon)}(f(m,n))$ for $i=1,\ldots,P(n)$, where $P(n)$ is a polynomial in $n$, then we immediately get (by applying the union bound $P(n)$ times)
\begin{equation}
    \max_{i=1,\ldots,P(n)} \vert X_i \vert = \mathcal{O}_{m,n}^{(\epsilon)}(f(m,n)). \label{eq: definition order with high probability property 1}
\end{equation}
Additionally, if $X = \mathcal{O}_{m,n}^{(\epsilon)}(f(m,n))$ and $Y = g^{(\epsilon)}(X)$, where $g^{(\epsilon)}\in\mathcal{C}^1(\mathbb{R})$ for all $\epsilon>0$, and $\lim_{m,n\rightarrow \infty} f(m,n) = 0$, then by a Taylor expansion of $g^{(\epsilon)}(x)$,
\begin{equation}
    Y = g^{(\epsilon)}(0) + \mathcal{O}_{m,n}^{(\epsilon)}(f(m,n)). \label{eq: definition order with high probability property 2}
\end{equation}
We will use properties~\eqref{eq: definition order with high probability property 1} and~\eqref{eq: definition order with high probability property 2}  of Definition~\ref{def: order with high probability} seamlessly throughout the remaining proofs.

Since $\mathcal{M}$ is compact and $\rho_{\epsilon}(x)$ from~\eqref{eq:integral scaling eq with density} is positive and continuous, then it is also bounded from above and from below away from zero (by constants that may depend on $\epsilon$).
Now, let us define
\begin{equation}
    \widetilde{d}_i = \frac{\rho_{\epsilon}(x)}{ \sqrt{(n -1) (\pi \epsilon)^{d/2} }}, \qquad \widetilde{h}_i = \widetilde{d}_i \operatorname{exp}\{ -\Vert x_i \Vert_2^2/\epsilon \}, \qquad h_i = d_i \operatorname{exp}\{ -\Vert y_i \Vert_2^2/\epsilon \}, \label{eq: d_i_tilde, h_i_tilde, and h_i def}
\end{equation}
for all $i = 1,\ldots,n$, and
\begin{align}
    {H}_{i,j} = 
    \begin{dcases}
    \operatorname{exp}\{ 2\langle y_i,y_j \rangle / \epsilon \}, & i\neq j,\\
    0, & i=j,
    \end{dcases}, \qquad\qquad
    \widetilde{H}_{i,j} = 
    \begin{dcases}
    \operatorname{exp}\{ 2\langle {x}_i,{x}_j \rangle / \epsilon \}, & i\neq j,\\
    0, & i=j,
    \end{dcases},
\end{align}
for all $i,j = 1,\ldots,n$. Observe that 
\begin{equation}
    1 = \sum_{j=1}^{n} {d}_i K_{i,j} {d}_j 
    = \sum_{j=1}^{n} {h}_i {H}_{i,j} {h}_j,
\end{equation}
where the latter is a system of scaling equations in $\mathbf{h} = [h_1,\ldots,h_{n}] > 0$. Let us denote
\begin{equation}
    \mathcal{E}(m) = \sqrt{\log m} \cdot \operatorname{max}\{E,E^2\sqrt{m}\}. \label{eq: E caligraphic def}
\end{equation}
Applying Lemma~\ref{lem:noise scalar product concentration} and utilizing Definition~\ref{def: order with high probability} together with the fact that $ m \geq n^\gamma$ and $E \leq C/(m^{1/4} \sqrt{\log m})$ (so $\lim_{m\rightarrow\infty}\mathcal{E}(m) = 0$), we have
\begin{align}
    {H}_{i,j} &= \operatorname{exp}\{ 2\langle y_i,y_j \rangle / \epsilon \} = \operatorname{exp}\{ 2 \langle {x}_i,{x}_j \rangle / \epsilon \} \left( 1 + \mathcal{O}_{m,n}^{(\epsilon)} \left({\mathcal{E}(m)}\right) \right)
    = \widetilde{H}_{i,j}  + \mathcal{O}_{m,n}^{(\epsilon)} \left({\mathcal{E}(m)}\right), \label{eq:H_tilde approx}
\end{align}
for all $i\neq j$, where we also used the fact that $\widetilde{H}_{i,j} \leq \operatorname{exp}\{ 2 / \epsilon \}$.
Therefore, we can write
\begin{align}
    \sum_{j=1,\; j\neq i}^{n} \widetilde{h}_i {H}_{i,j} \widetilde{h}_j 
    =&\sum_{j=1,\; j\neq i}^{n} \widetilde{h}_i \widetilde{H}_{i,j} \widetilde{h}_j +  \mathcal{O}_{m,n}^{(\epsilon)} \left(\mathcal{E}(m)\right) \sum_{j=1,\; j\neq i}^{n} \widetilde{h}_i \widetilde{h}_j \nonumber \\
    &= \sum_{j=1,\; j\neq i}^{n} \widetilde{d}_i \mathcal{K}_{\epsilon}(x_i,x_j) \widetilde{d}_j +  \mathcal{O}_{m,n}^{(\epsilon)} \left(\mathcal{E}(m)\right) \sum_{j=1,\; j\neq i}^{n} \widetilde{h}_i \widetilde{h}_j \nonumber \\
    &= \frac{1}{(n-1) (\pi \epsilon)^{d/2}}\sum_{j=1,\; j\neq i}^{n} \rho_{\epsilon}(x_i) \mathcal{K}_{\epsilon}(x_i,x_j) \rho_{\epsilon}(x_j) +  \mathcal{O}_{m,n}^{(\epsilon)} \left(\mathcal{E}(m)\right) \sum_{j=1,\; j\neq i}^{n} \widetilde{h}_i \widetilde{h}_j \nonumber \\
    &= \frac{1}{(n-1) (\pi \epsilon)^{d/2}}\sum_{j=1,\; j\neq i}^{n} \rho_{\epsilon}(x_i) \mathcal{K}_{\epsilon}(x_i,x_j) \rho_{\epsilon}(x_j) +  \mathcal{O}_{m,n}^{(\epsilon)} \left(\mathcal{E}(m)\right), \label{eq:noise in density est bound}
\end{align}
for all $i=1,\ldots,n$, where we used the fact that
\begin{equation}
    \left\vert \sum_{j=1,\; j\neq i}^{n} \widetilde{h}_i \widetilde{h}_j \right\vert \leq (n-1)\max_{i=1,\ldots,n} \{\widetilde{h}_i^2\} \leq \frac{\max_{x\in\mathcal{M}} \rho_{\epsilon}^2(x) }{(\pi \epsilon)^{d/2}} = \mathcal{O}_{m,n}^{(\epsilon)} \left(1\right),
\end{equation}
since $\Vert x \Vert \leq 1$ for all $x\in\mathcal{M}$ and $\rho_{\epsilon}(x)$ is bounded.
Continuing, when conditioning on the value of $x_i$, Hoeffding's inequality asserts that
\begin{align}
    &\frac{1}{(n-1) (\pi \epsilon)^{d/2}}\sum_{j=1,\; j\neq i}^{n} \rho_{\epsilon}(x_i) \mathcal{K}_{\epsilon}(x_i,x_j) \rho_{\epsilon}(x_j) \nonumber \\
    &= \frac{1}{(\pi \epsilon)^{d/2}} \left[ \int_\mathcal{M}  \rho_{\epsilon}(x_i) \mathcal{K}_{\epsilon}(x_i,y) \rho_{\epsilon}(y) q(y) d\mu(y) + \mathcal{O}_{m,n}^{(\epsilon)} \left(\sqrt{\frac{{\log n}}{n}}\right) \right] 
    = 1 + \mathcal{O}_{m,n}^{(\epsilon)} \left(\sqrt{\frac{{\log n}}{n}}\right), \label{eq:sample to population bound in density est}
\end{align}
where we used~\eqref{eq:integral scaling eq with density}. Since~\eqref{eq:sample to population bound in density est} holds conditionally on any value of $x_i$, it also holds unconditionally.
Overall, combining~\eqref{eq:noise in density est bound} with~\eqref{eq:sample to population bound in density est} and applying the union bound tells us that
\begin{align}
    \max_{i=1,\ldots,n} \left\vert \sum_{j=1}^{n} \widetilde{h}_i {H}_{i,j} \widetilde{h}_j - 1\right\vert = \mathcal{O}_{m,n}^{(\epsilon)} \left( \widetilde{\mathcal{E}}(m,n) \right), \label{eq: sum of h_tilde_i H_tilde_i_j h_tilde_j approx}
\end{align}
where we defined
\begin{equation}
    \widetilde{\mathcal{E}}(m,n) = \operatorname{max}\left\{E \sqrt{\log m} ,E^2\sqrt{m \log m},\sqrt{\frac{\log n}{n}}\right\}. \label{eq:widetilde E def}
\end{equation}

Next, we aim to apply Lemma~\ref{lem:closeness of scaling factors} with $A = {H}$ and $\hat{v}_i = \widetilde{h}_i$. To that end, we first need an upper bound on $1/((n-2) \min_{i\neq j}\{\widetilde{h}_i {H}_{i,j} \widetilde{h}_j \} )$. According to Lemma~\ref{lem:noise scalar product concentration} and the fact that $\Vert x \Vert \leq 1$ for all $x\in\mathcal{M}$, we have
\begin{equation}
    \langle y_i, y_j \rangle = \mathcal{O}_{m,n}^{(\epsilon)} \left(1\right),
\end{equation}
for all $i\neq j$, and it follows that
\begin{equation}
    \frac{1}{{H}_{i,j}} \leq \max_{i\neq j} \left\{ \operatorname{exp}\{ 2 |\langle y_i, y_j \rangle| / \epsilon  \} \right\} =  \mathcal{O}_{m,n}^{(\epsilon)} \left(1\right),
\end{equation}
for all $i \neq j$. Consequently,
\begin{align}
    \frac{1}{(n-2) \widetilde{h}_i {H}_{i,j} \widetilde{h}_j } &= \left( \frac{n-1}{n-2} \right)  \frac{ (\pi \epsilon)^{d/2} \operatorname{exp}\{(\Vert x_i \Vert_2^2 + \Vert x_j\Vert_2^2)/\epsilon \}  }{\rho_{\epsilon}(x_i) \rho_{\epsilon}(x_j) {H}_{i,j}} 
    =  \mathcal{O}_{m,n}^{(\epsilon)} \left(1\right), \label{eq:bound on smallest entry for lemma}
\end{align}
for all $i \neq j$, since $\rho_{\epsilon}(x)$ is bounded from below away from zero. Hence, utilizing~\eqref{eq: sum of h_tilde_i H_tilde_i_j h_tilde_j approx} and~\eqref{eq:bound on smallest entry for lemma}, Lemma~\ref{lem:closeness of scaling factors} asserts that
\begin{equation}
    \sqrt{\frac{1-\mathcal{O}_{m,n}^{(\epsilon)} \left( \widetilde{\mathcal{E}}(m,n) \right)}{1 + \mathcal{O}_{m,n}^{(\epsilon)} \left( \widetilde{\mathcal{E}}(m,n) \right)}} \leq \frac{h_i}{\widetilde{h}_i} \leq \sqrt{\frac{1}{\left[1-\mathcal{O}_{m,n}^{(\epsilon)} \left( \widetilde{\mathcal{E}}(m,n) \right) \right]\left[ 1 - \mathcal{O}_{m,n}^{(\epsilon)} \left( \widetilde{\mathcal{E}}(m,n) \right) \right]}}, \label{eq: h_i/h_i_tilde lower and upper bound}
\end{equation}
for all $i=1,\ldots,n$. Therefore, by properties~\eqref{eq: definition order with high probability property 1} and~\eqref{eq: definition order with high probability property 2} of Definition~\ref{def: order with high probability} (utilizing the fact that $\widetilde{\mathcal{E}}(m,n) \rightarrow 0$ as $m,n\rightarrow \infty$), we conclude that
\begin{equation}
    \max_{i=1,\ldots,n}\left\vert \frac{h_i}{\widetilde{h}_i} - 1 \right\vert = \mathcal{O}_{m,n}^{(\epsilon)} \left(\widetilde{\mathcal{E}}(m,n)\right). \label{eq: h_i/h_i_tilde bound}
\end{equation}
Next, using~\eqref{eq:H_tilde approx} and~\eqref{eq: h_i/h_i_tilde bound} we can write
\begin{align}
    W_{i,j} &= {d}_i {K}_{i,j} {d}_j
    = {h}_i {H}_{i,j} {h}_j 
    = h_i \widetilde{H}_{i,j} h_j \left(1 + \mathcal{O}_{m,n}^{(\epsilon)} \left({\widetilde{\mathcal{E}}(m,n)}\right) \right) \nonumber \\
    &= \left( \frac{h_i}{\widetilde{h}_i} \right) \widetilde{h}_i \widetilde{H}_{i,j} \widetilde{h}_j \left( \frac{h_j}{\widetilde{h}_j} \right) \left(1 + \mathcal{O}_{m,n}^{(\epsilon)} \left(\widetilde{\mathcal{E}}(m,n)\right) \right) \nonumber \\
    &= \widetilde{d}_i \mathcal{K}_{\epsilon}(x_i,x_j) \widetilde{d}_j \left(1 +  \mathcal{O}_{m,n}^{(\epsilon)} \left(\widetilde{\mathcal{E}}(m,n)\right) \right)^3  
    = \frac{\rho_{\epsilon}(x_i) \mathcal{K}_{\epsilon}(x_i,x_j) \rho_{\epsilon}(x_j)}{(n-1) (\pi \epsilon)^{d/2}} \left(1 + \mathcal{O}_{m,n}^{(\epsilon)} \left(\widetilde{\mathcal{E}}(m,n)\right) \right),
\end{align}
and applying the union bound gives~\eqref{eq: W_ij variance error}.
Lastly, by~\ref{eq: d_i_tilde, h_i_tilde, and h_i def} and~\eqref{eq: h_i/h_i_tilde bound}, we have
\begin{align}
    1 + \mathcal{O}_{m,n}^{(\epsilon)} \left(\widetilde{\mathcal{E}}(m,n)\right) &= \frac{h_i}{\widetilde{h}_i} 
    = \frac{d_i \sqrt{(n-1)(\pi \epsilon)^{d/2}}}{\rho_{\epsilon}(x)} \operatorname{exp}\left\{ - (\Vert y_i \Vert_2^2 - \Vert {x}_i \Vert_2^2) / \epsilon \right\} \nonumber \\
    &= \frac{d_i \sqrt{(n-1)(\pi \epsilon)^{d/2}}}{\rho_{\epsilon}(x)} \operatorname{exp}\left\{ - \Vert \eta_i \Vert_2^2/\epsilon \right\} \operatorname{exp}\left\{ - 2\langle x_i,\eta_i \rangle) / \epsilon \right\}, \label{eq: h_i over h_i_tilde expression}
\end{align}
where analogously to~\eqref{eq: langle eta, x rangle bound }, 
\begin{equation}
    \langle x_i,\eta_i \rangle = \mathcal{O}_{m,n}^{(\epsilon)}(\mathcal{E}(m)). \label{eq: inner product between x_i and eta_i bound}
\end{equation}
Combining~\eqref{eq: h_i over h_i_tilde expression} and~\eqref{eq: inner product between x_i and eta_i bound} while utilizing Definition~\ref{def: order with high probability} and applying the union bound establishes~\eqref{eq: d_i variance error}.

\section{Proof of Theorem~\ref{thm: scaling function convergence}} \label{appendix: proof of scaling function convergence}
Define the function $e(x): \mathcal{M} \rightarrow (-1,\infty)$ via 
\begin{equation}
    \rho_{\epsilon}(x) = \frac{F_{\epsilon}(x)}{\sqrt{q(x)}}(1+e(x)), \label{eq: e error from rho def}
\end{equation}
where $F_\epsilon$ is from~\eqref{eq: F_eps def}.
According to~\eqref{eq:integral scaling eq with density}, we have
\begin{multline}
    1 = \frac{F_{\epsilon}(x)}{(\pi \epsilon)^{d/2}  \sqrt{q(x)}} \bigg( \int_{\mathcal{M}} {\mathcal{K}_{\epsilon}(x,y)} F_{\epsilon}(y) \sqrt{q(y)} d\mu(y) 
    + \int_{\mathcal{M}} {\mathcal{K}_{\epsilon}(x,y)} F_{\epsilon}(y) e(y) \sqrt{q(y)} d\mu(y) \\
    + e(x) \int_{\mathcal{M}} {\mathcal{K}_{\epsilon}(x,y)} F_{\epsilon}(y) (1 + e(y)) \sqrt{q(y)} d\mu(y) \bigg),
\end{multline}
for all $x\in\mathcal{M}$. Using the fact that $q(x)\in \mathcal{C}^6(\mathcal{M})$, applying Lemma~\ref{lem: diffusion maps lemma}, and after some manipulation,
\begin{equation}
    \frac{F_{\epsilon}(x)}{(\pi \epsilon)^{d/2} \sqrt{q(x)}} \int_{\mathcal{M}} {\mathcal{K}_{\epsilon}(x,y)} F_{\epsilon}(y) \sqrt{q(y)} d\mu(y) = 1 + \mathcal{O}(\epsilon^2), \label{eq: int k sqrt(q) asymptotic expansion in epsilon}
\end{equation}
for all $x\in\mathcal{M}$. The reason that we need $q(x)\in \mathcal{C}^6(\mathcal{M})$ is that the above derivation requires us to apply Lemma~\ref{lem: diffusion maps lemma} to a function $f(x)$ involving $\Delta_{\mathcal{M}}\{q^{-1/2}\}$. Hence, we need $q(x)\in \mathcal{C}^5(\mathcal{M})$ so that $\Delta_{\mathcal{M}}\{q^{-1/2}\} \in \mathcal{C}^3(\mathcal{M})$. The stronger requirement $q(x)\in \mathcal{C}^6(\mathcal{M})$ is to make sure that the constant in the $\mathcal{O}(\epsilon^2)$ term in Lemma~\ref{lem: diffusion maps lemma} can be bounded uniformly for all $x\in\mathcal{M}$.

Next, according to~\eqref{eq: e error from rho def} and~\eqref{eq:integral scaling eq with density},
\begin{equation}
    \frac{e(x) F_{\epsilon}(x) }{(\pi \epsilon)^{d/2} \sqrt{q(x)}} \int_{\mathcal{M}} {\mathcal{K}_{\epsilon}(x,y)} F_{\epsilon}(y) (1+e(y)) \sqrt{q(y)} d\mu(y) = \frac{e(x) F_{\epsilon}(x)}{\rho_{\epsilon}(x) \sqrt{q(x)}},
\end{equation}
for all $x\in\mathcal{M}$. Therefore, we have that
\begin{equation}
    \mathcal{O}(\epsilon^2) = \frac{F_{\epsilon}(x)}{(\pi \epsilon)^{d/2} \sqrt{q(x)}} \int_{\mathcal{M}} {\mathcal{K}_{\epsilon}(x,y)} F_{\epsilon}(y) e(y) \sqrt{q(y)} d\mu(y) + \frac{e(x) F_{\epsilon}(x) }{\rho_{\epsilon}(x) \sqrt{q(x)}},
\end{equation}
uniformly for all $x\in\mathcal{M}$. Multiplying the above by $e(x)q(x)$, integrating over $x\in\mathcal{M}$ with respect to $d\mu(x)$, and using the fact that $q(x) \leq \overline{C}$ and $0.5 \leq F_{\epsilon}(x)$ for all sufficiently small $\epsilon$, we obtain
\begin{align}
    &\mathcal{O}(\epsilon^2) \int_{\mathcal{M}} \vert e(x) \vert d\mu(x) 
    \geq \int_{\mathcal{M}} \mathcal{O}(\epsilon^2)  e(x) q(x) d\mu(x) \nonumber \\
    &\geq  \frac{1}{(\pi \epsilon)^{d/2}} \int_{\mathcal{M}} \int_{\mathcal{M}} F_{\epsilon}(x) \sqrt{q(x)} e(x) {\mathcal{K}_{\epsilon}(x,y)} e(y) \sqrt{q(y)} F_{\epsilon}(y) d\mu(y) d\mu(x) + 2\sqrt{\overline{C}}\int_{\mathcal{M}} \frac{e^2(x) }{\rho_{\epsilon}(x)} d\mu(x), \nonumber \\
    &\geq 2\sqrt{\overline{C}}\int_{\mathcal{M}} \frac{e^2(x) }{\rho_{\epsilon}(x)} d\mu(x), \label{eq: bound 1}
\end{align}
where we also used the fact that the operator $Gf = \int_{\mathcal{M}} {\mathcal{K}_{\epsilon}(x,y)} f(y) d\mu(y)$ is positive definite with respect to the standard $L^2_{(\mathcal{M},d\mu)}$ inner product $\left\langle f , g \right\rangle_{L^2_{(\mathcal{M},d\mu)}} = \int_{\mathcal{M}} f(x) g(x) d\mu(x)$. 

Next, we provide a lower bound for $\int_\mathcal{M} e^2(x)/\rho_{\epsilon}(x) d\mu(x)$ using Lemma~\ref{lem: boundedness of rho}. For any pair of positive and continuous functions $f,g$ on $\mathcal{M}$, Holder's inequality asserts that
\begin{equation}
    \int_{\mathcal{M}} f(x) d\mu(x) = \int_{\mathcal{M}} \frac{f(x)}{g(x)} g(x) d\mu(x) 
    \leq \left( \int_{\mathcal{M}} \left[ \frac{f(x)}{g(x)} \right]^\ell d\mu(x) \right)^{1/\ell} \left( \int_{\mathcal{M}} [g(x)]^q d\mu(x) \right)^{1/q}, 
\end{equation}
for $\ell,q\in [1,\infty]$, $1/\ell+1/q=1$. Taking $f(x) = \vert e(x) \vert^{2/\ell}$, $g(x) = [\rho_{\epsilon}(x)]^{1/\ell}$, and using the fact that $q = \ell/(\ell-1)$, we obtain
\begin{equation}
    \int_{\mathcal{M}} \frac{e^2(x) }{\rho_{\epsilon}(x)} d\mu(x) \geq \frac{\left( \int_{\mathcal{M}} \vert e(x) \vert^{2/\ell} d\mu(x) \right)^\ell}{\left( \int_{\mathcal{M}} [\rho_{\epsilon}(x)]^{1/(\ell-1)} d\mu(x) \right)^{(\ell-1)}}. \label{eq: bound 2}
\end{equation}
Lemma~\ref{lem: boundedness of rho} asserts that $\vert \{x\in\mathcal{M}: \; [\rho_\epsilon(x)]^{1/(\ell-1)} > t \}\vert \leq C/t^{2(\ell-1)}$ for all $\epsilon \leq \epsilon_0$, and taking $\ell > 3/2$, we have
\begin{equation}
\int_{\mathcal{M}} [\rho_{\epsilon}(x)]^{1/(\ell-1)} d\mu(x) \leq \vert \{x:\mathcal{M}: \; \rho_{\epsilon}(x) < 1  \} \vert + \int_{1}^\infty \frac{C}{t^{2(\ell-1)}} dt \leq \vert \mathcal{M} \vert + \frac{C}{2p-3} := C_\ell < \infty. \label{eq: bound 3}
\end{equation}
Plugging~\eqref{eq: bound 3} and~\eqref{eq: bound 2} into~\eqref{eq: bound 1}, we arrive at
\begin{equation}
    \left( \int_{\mathcal{M}} \vert e(x) \vert^{2/\ell} d\mu(x) \right)^\ell \leq {\mathcal{O}(\epsilon^2)} C_\ell^{\ell-1} \int_{\mathcal{M}} \vert e(x) \vert d\mu(x) \leq {\mathcal{O}(\epsilon^2)} C_\ell^{'} \left( \int_{\mathcal{M}} \vert e(x) \vert^{2/\ell} d\mu(x) \right)^{\ell/2},
\end{equation}
where we used Holder's inequality for $2/\ell \geq 1$ in the last transition, and denoted $C_\ell^{'} = C_\ell^{\ell-1} \vert \mathcal{M} \vert^{\ell/2}$. Since $\ell>3/2$ and $2/\ell\geq 1$, we have for $p := 2/\ell \in [1,4/3)$,
\begin{equation}
    \left( \int_{\mathcal{M}} \vert e(x) \vert^{p} d\mu(x) \right)^{1/p} \leq {\mathcal{O}(\epsilon^2)} C_{2/p}^{'}. \label{eq: e(x) L_ell bound in proof}
\end{equation}
Plugging $e(x) = (\sqrt{q(x)}/F_{\epsilon}(x)) (\rho_{\epsilon}(x) - F_{\epsilon}(x)/\sqrt{q(x)})$ into the above and using the fact that $q(x)$ is bounded from below away from zero and $F_{\epsilon}(x)$ is upper bounded (for all sufficiently small $\epsilon$) proves~\eqref{eq: L_p convergence}.

Next, we use~\eqref{eq:integral scaling eq with density},~\eqref{eq: e error from rho def}, and~\eqref{eq: int k sqrt(q) asymptotic expansion in epsilon} to write
\begin{align}
    \frac{1}{\rho_{\epsilon}(x)} &= \frac{1}{(\pi \epsilon)^{d/2}}\int_{\mathcal{M}} \mathcal{K}_{\epsilon}(x,y) F_{\epsilon}(y) (1 + e(y)) \sqrt{q(y)} d\mu(y) \nonumber \\
    &= \frac{\sqrt{q(x)}}{F_{\epsilon}(x)}[1 + \mathcal{O}(\epsilon^2)] +  \frac{1}{(\pi \epsilon)^{d/2}}\int_{\mathcal{M}} \mathcal{K}_{\epsilon}(x,y) F_{\epsilon}(y) e(y) \sqrt{q(y)} d\mu(y), \label{eq: 1/rho convergence in epsilon}
\end{align}
universally for all $x\in\mathcal{M}$. Since $q(x) \leq \overline{C}$ and $F_{\epsilon}(x)\leq 2$ for all sufficiently small $\epsilon$, Holder's inequality and~\eqref{eq: e(x) L_ell bound in proof} imply that
\begin{align}
    \left\vert \frac{1}{(\pi \epsilon)^{d/2}}\int_{\mathcal{M}} \mathcal{K}_{\epsilon}(x,y) F_{\epsilon}(y) e(y) \sqrt{q(y)} d\mu(y) \right\vert 
    &\leq 2\sqrt{\overline{C}} \frac{1}{(\pi \epsilon)^{d/2}} \left(\int_{\mathcal{M}} [\mathcal{K}_{\epsilon}(x,y)]^\ell d\mu(y) \right)^{1/\ell} \left(\int_{\mathcal{M}} \vert e(y) \vert^p d\mu(y) \right)^{1/p} \nonumber \\
    &\leq 2\sqrt{\overline{C}} C_{2/p}^{'} \frac{\mathcal{O}(\epsilon^2)}{(\pi \epsilon)^{d/2}} \left(\int_{\mathcal{M}} [\mathcal{K}_{\epsilon}(x,y)]^\ell d\mu(y) \right)^{1/\ell} \nonumber \\
    &\leq 2\sqrt{\overline{C}} C_{2/p}^{'} \frac{\mathcal{O}(\epsilon^2)}{(\pi \epsilon)^{d/2}} \left(\frac{\pi \epsilon}{\ell}\right)^{d/(2\ell)}[1+\mathcal{O}(\epsilon)]^{1/\ell}, \label{eq: int K e q bound}
\end{align}
for $p \in [1, 4/3)$ and $\ell = p/(p - 1)$, where we used~\eqref{eq: int k sqrt(q) asymptotic expansion in epsilon} with $q(x)\equiv 1$ and $[\mathcal{K}_{\epsilon}(x,y)]^\ell = \mathcal{K}_{\epsilon/\ell}(x,y)$. If we take $p$ arbitrarily close to $4/3$, then $\ell$ approaches $4$, and we see that for any $d \leq 5$, the right-hand side of~\eqref{eq: int K e q bound} converges to zero as $\epsilon \rightarrow 0 $. Therefore, together with~\eqref{eq: 1/rho convergence in epsilon} and the definition of $F_{\epsilon}(x)$ in~\eqref{eq: e error from rho def}, it implies that $1/\rho_{\epsilon}(x)$ converges to $\sqrt{q(x)}$ uniformly for all $x\in\mathcal{M}$ as $\epsilon\rightarrow 0$. Consequently, $1/\rho_{\epsilon}(x)$ is bounded from below away from zero by a constant for all $x\in\mathcal{M}$ and all sufficiently small $\epsilon$. Using this observation in~\eqref{eq: bound 1}, we obtain
\begin{equation}
    \int_{\mathcal{M}} \vert e(x) \Vert_2^2 d\mu(x) \leq \mathcal{O}(\epsilon^2) \int_{\mathcal{M}} \vert e(x) \vert d\mu(x) = \mathcal{O}(\epsilon^4), \label{eq: L2 convergence in proof}
\end{equation}
where we used~\eqref{eq: e(x) L_ell bound in proof} with $p=1$ in the last transition. Overall, we establishes that~\eqref{eq: e(x) L_ell bound in proof} holds for $p\in [4/3, 2]$ if $d \leq 5$. Lastly, we can now use~\eqref{eq: L2 convergence in proof} in~\eqref{eq: int K e q bound} for $\ell = 2$, $p=2$, which together with~\eqref{eq: 1/rho convergence in epsilon} establishes~\eqref{eq: pointwise convergence with d <= 2}.

\section{Proof of Theorem~\ref{thm: density estimator}} \label{appendix: proof of robust density estimator}
According to Theorem~\ref{thm:scaling factors asym expression}, we have
\begin{align}
    \frac{1}{\hat{q}_i} 
    &= (n-1) \left( \sum_{j=1,\; j\neq i}^{n} \left[ {d}_i {K}_{i,j} {d}_j \right]^s \right)^{1/(s-1)} \nonumber \\ 
    &= {(\pi \epsilon)^{(ds/2)/(1-s)}} \left[ \frac{1}{n-1}\sum_{j=1,\; j\neq i}^{n} \rho_{\epsilon}^s(x_i) \mathcal{K}_{\epsilon}^s(x_i,x_j) \rho_{\epsilon}^s(x_j) \left(1 + \mathcal{O}_{m,n}^{(\epsilon)} \left(\widetilde{\mathcal{E}}(m,n)\right) \right)^s \right]^{1/(s-1)} \nonumber \\ 
    &= {(\pi \epsilon)^{(ds/2)/(1-s)}} \left[ \frac{1}{n-1}\sum_{j=1,\; j\neq i}^{n} \rho_{\epsilon}^s(x_i) \mathcal{K}_{\epsilon}^s(x_i,x_j) \rho_{\epsilon}^s(x_j) \right]^{1/(s-1)} \left(1 + \mathcal{O}_{m,n}^{(\epsilon)} \left(\widetilde{\mathcal{E}}(m,n)\right) \right),
\end{align}    
where $\widetilde{\mathcal{E}}(m,n)$ is from~\eqref{eq:widetilde E def}. When conditioning on the value of $x_i$, Hoeffding's inequality tells us that
\begin{align}
    &{(\pi \epsilon)^{(ds/2)/(1-s)}} \left[ \frac{1}{n-1}\sum_{j=1,\; j\neq i}^{n} \rho_{\epsilon}^s(x_i) \mathcal{K}_{\epsilon}^s(x_i,x_j) \rho_{\epsilon}^s(x_j) \right]^{1/(s-1)} \nonumber \\ 
    &= {(\pi \epsilon)^{(ds/2)/(1-s)}} \left[ \int_{\mathcal{M}} \rho_{\epsilon}^s(x_i) \mathcal{K}_{\epsilon}^s(x_i,y) \rho_{\epsilon}^s(y) q(y) d\mu(y) \right]^{1/(s-1)} + \mathcal{O}_{m,n}^{(\epsilon)} \left(\sqrt{\frac{{\log n}}{n}}\right).
  \label{eq: quadratic sum density est variance error}
\end{align}
Let us define
\begin{equation}
    g(x) = {(\pi \epsilon)^{(ds/2)/(1-s)}} \left[ \int_{\mathcal{M}} \rho_{\epsilon}^s(x) \mathcal{K}_{\epsilon}^s(x,y) \rho_{\epsilon}^s(y) q(y) d\mu(y) \right]^{1/(s-1)}, \label{eq: g def}
\end{equation}
which is bounded for all $x\in\mathcal{M}$ by a constant that depends on $\epsilon$.
Consequently, we obtain that
\begin{equation}
    \frac{1}{\hat{q}_i} = \left( g(x_i) + \mathcal{O}_{m,n}^{(\epsilon)} \left(\sqrt{\frac{{\log n}}{n}}\right) \right) \left(1 + \mathcal{O}_{m,n}^{(\epsilon)} \left(\widetilde{\mathcal{E}}(m,n)\right) \right) =  g(x_i) + \mathcal{O}_{m,n}^{(\epsilon)} \left(\widetilde{\mathcal{E}}(m,n)\right), \label{eq: 1/q_hat_i bound}
\end{equation}
for all $i=1,\ldots,n$.

Next, we turn to establish the bias error. 
According to Assumption~\ref{assump: pointwise convergence}, for $s> 0$, $s\neq 1$, we can write
\begin{align}
    g(x) &= (\pi \epsilon)^{(ds/2)/(1-s)} \left[{q^{-s/2}(x)} \int_{\mathcal{M}} \mathcal{K}_{\epsilon}^s(x,y) q^{1-s/2}(x) d\mu(y) \left( 1 + \mathcal{O}(\epsilon) \right)^{2s} \right]^{1/(s-1)} \nonumber \\
    &=\frac{1 + \mathcal{O}(\epsilon)}{s^{d/(2(s-1))}(\pi \epsilon)^{d/2} q(x)} , \label{eq: g(x) asym approx}
\end{align}
where we also used the fact that, according to Lemma~\ref{lem: diffusion maps lemma}, for $s\neq 0$ and $x\in\mathcal{M}$,
\begin{equation}
    \int_{\mathcal{M}} \mathcal{K}_{\epsilon}^s(x_i,y) q^{1-s/2}(y) d\mu(y) = \int_{\mathcal{M}} K_{\epsilon/s}(x_i,y) q^{1-s/2}(y) d\mu(y) = \left(\frac{\pi \epsilon}{s}\right)^{d/2} q^{1-s/2}(x)\left[  1 + \mathcal{O}(\epsilon) \right].
\end{equation}
Combining~\eqref{eq: g(x) asym approx} with~\eqref{eq: 1/q_hat_i bound} and using the fact that $q(x)$ is bounded gives 
\begin{equation}
    \frac{s^{d/(2(s-1))}(\pi \epsilon)^{d/2} q(x_i)}{\hat{q}_i} = 1 + \mathcal{O}(\epsilon) + \mathcal{O}_{m,n}^{(\epsilon)} \left(\widetilde{\mathcal{E}}(m,n)\right),
\end{equation}
for all $i=1,\ldots,n$, where $\widetilde{\mathcal{E}}(m,n)$ is from~\eqref{eq:widetilde E def}. Lastly, applying the union bound over all $i=1,\ldots,n$ concludes the proof.

\section{Proof of Proposition~\ref{prop: noise magnitude est}} \label{appenidx: proof of noise magnitude estimation}
Utilizing~\eqref{eq: noise magnitude estimator}, Theorem~\ref{thm:scaling factors asym expression}, Assumption~\ref{assump: pointwise convergence}, and Theorem~\ref{thm: density estimator}, we have
\begin{align}
    \hat{N}_i &= \epsilon \log \left( s^{{d}/{(4(s-1))}} \rho_{\epsilon}(x_i) \sqrt{q(x_i)} \operatorname{exp}\left( \frac{\Vert \eta_i \Vert_2^2}{\epsilon} \right) \left( 1 + \mathcal{E}_{i,i} \right)    \sqrt{ 1 + \mathcal{O}(\epsilon) +  \mathcal{E}^{(1)}_{i} } \right) \nonumber \\
    &= \epsilon \log \left( s^{{d}/{(4(s-1))}} \operatorname{exp}\left( \frac{\Vert \eta_i \Vert_2^2}{\epsilon} \right) \left( 1 + \mathcal{O}(\epsilon) \right)\left( 1 + \mathcal{E}_{i,i} \right)    \sqrt{ 1 + \mathcal{O}(\epsilon) +  \mathcal{E}^{(1)}_{i} } \right) \nonumber \\
    &= \Vert \eta_i \Vert_2^2 + \epsilon \frac{d \log s}{4(s-1)} + \epsilon \log \left( \left( 1 + \mathcal{O}(\epsilon) \right)\left( 1 + \mathcal{E}_{i,i} \right)    \sqrt{ 1 + \mathcal{O}(\epsilon) +  \mathcal{E}^{(1)}_{i} } \right) \nonumber \\
    &= \Vert \eta_i \Vert_2^2 + \epsilon \frac{d \log s}{4(s-1)} + \epsilon \log \left( 1 + \mathcal{O}(\epsilon) + \mathcal{O}_{m,n}^{(\epsilon)} \left(\widetilde{\mathcal{E}}(m,n)\right) \right) \nonumber \\
    &= \Vert \eta_i \Vert_2^2 + \epsilon \frac{d \log s}{4(s-1)} + \mathcal{O}(\epsilon^2) + \mathcal{O}_{m,n}^{(\epsilon)} \left(\widetilde{\mathcal{E}}(m,n)\right),
\end{align}
by Taylor expansion, where $\widetilde{\mathcal{E}}(m,n)$ is from~\eqref{eq:widetilde E def}. Next, according to~\eqref{eq: inner product between x_i and eta_i bound},
\begin{equation}
    \Vert y_i \Vert_2^2 = \Vert x_i \Vert_2^2 + \Vert \eta_i \Vert_2^2 + \mathcal{O}_{m,n}^{(\epsilon)} \left({\mathcal{E}}(m)\right), \label{eq: noisy point magnitude expansion}
\end{equation}
where ${\mathcal{E}}(m)$ is from~\eqref{eq: E caligraphic def}, hence 
\begin{align}
    \hat{S}_i &= \Vert x_i \Vert_2^2 + \Vert \eta_i \Vert_2^2 + \mathcal{O}_{m,n}^{(\epsilon)} \left({\mathcal{E}}(m)\right) - \hat{N}_i \nonumber \\
    &= \Vert x_i \Vert_2^2 - \epsilon \frac{d \log s}{4(s-1)} + \mathcal{O}(\epsilon^2) + \mathcal{O}_{m,n}^{(\epsilon)} \left(\widetilde{\mathcal{E}}(m,n)\right).
\end{align}
Lastly, by Lemma~\ref{lem:noise scalar product concentration} and~\eqref{eq: noisy point magnitude expansion}, 
\begin{align}
    \Vert y_i - y_j \Vert_2^2 &= \Vert y_i \Vert_2^2 + \Vert y_j \Vert_2^2 -2\langle y_i,y_j\rangle \nonumber \\
    &= \Vert x_i \Vert_2^2 + \Vert \eta_i \Vert_2^2 + \Vert x_j \Vert_2^2 + \Vert \eta_j \Vert_2^2 -2\langle {x}_i,{x}_j\rangle + \mathcal{O}_{m,n}^{(\epsilon)}\left({\mathcal{E}}(m)\right) \nonumber \\
    &= \Vert {x}_i - {x}_j \Vert_2^2 + \Vert \eta_i \Vert_2^2 + \Vert \eta_j \Vert_2^2 + \mathcal{O}_{m,n}^{(\epsilon)}\left({\mathcal{E}}(m)\right),
\end{align}
and consequently,
\begin{align}
    \hat{D}_{i,j} &= \Vert y_i - y_j \Vert_2^2 - \hat{N}_i -\hat{N}_j \nonumber \\
    &= \Vert {x}_i - {x}_j \Vert_2^2 + \Vert \eta_i \Vert_2^2 + \Vert \eta_j \Vert_2^2 - \hat{N}_i -\hat{N}_j + \mathcal{O}_{m,n}^{(\epsilon)}\left({\mathcal{E}}(m)\right) \nonumber \\
    &= \Vert {x}_i - {x}_j \Vert_2^2 - \epsilon \frac{d \log s}{2(s-1)} + \mathcal{O}(\epsilon^2) + \mathcal{O}_{m,n}^{(\epsilon)} \left(\widetilde{\mathcal{E}}(m,n)\right),
\end{align}
which concludes the proof.

\section{Proof of Theorem~\ref{thm: Laplacian estimator}} \label{appendix: proof of robust graph Laplacian normalization}
Let us write
\begin{align}
    \sum_{j=1}^n L^{(\alpha)}_{i,j} f(x_j) &= \frac{4}{\epsilon} \left[ f(x_i) - \sum_{j=1}^n \hat{W}^{(\alpha)}_{i,j} f(x_j) \right]
    = \frac{4}{\epsilon} \left[ f(x_i) - \frac{\sum_{j=1}^n \widetilde{W}_{i,j}^{(\alpha)}f(x_j)}{\sum_{j=1}^n \widetilde{W}_{i,j}^{(\alpha)}} \right] \nonumber \\
     &= \frac{4}{\epsilon} \left[ f(x_i) - \frac{\sum_{j=1}^n {W}_{i,j} \hat{q}_j^{-(\alpha-1/2)} f(x_j)}{\sum_{j=1}^n {W}_{i,j} \hat{q}_j^{-(\alpha-1/2)}} \right]. \label{eq: Laplacian expression in proof}
\end{align}
According to Theorem~\ref{thm:scaling factors asym expression} and~\eqref{eq: 1/q_hat_i bound}, we have
\begin{multline}
    \sum_{j=1}^n {W}_{i,j} \hat{q}_j^{-(\alpha-1/2)} f(x_j) = \\  (\pi \epsilon)^{-d/2} \rho_{\epsilon}(x_i)  (n-1)^{-1} \sum_{j=1,\; j\neq i}^n \mathcal{K}_{\epsilon}(x_i,x_j) \rho_{\epsilon}(x_j) [g(x_j)]^{(\alpha-1/2)} f(x_j) \left( 1 + \mathcal{O}_{m,n}^{(\epsilon)} (\widetilde{\mathcal{E}}(m,n)) \right),
\end{multline}
where $\widetilde{\mathcal{E}}(m,n)$ is from~\eqref{eq:widetilde E def}, and applying Hoeffding's inequality (while conditioning on $x_i$), 
\begin{multline}
    \sum_{j=1}^n {W}_{i,j} \hat{q}_j^{-(\alpha-1/2)} f(x_j) = \\ \rho_{\epsilon}(x_i)  (\pi \epsilon)^{-d/2} \int_{\mathcal{M}} \mathcal{K}_{\epsilon}(x_i,y) \rho_{\epsilon}(y) [g(y)]^{\alpha-1/2} f(y) q(y) d\mu(y) \left( 1 + \mathcal{O}_{m,n}^{(\epsilon)} (\widetilde{\mathcal{E}}(m,n)) \right). \label{eq: numerator bias term}
\end{multline}
Utilizing Assumption~\ref{assump: pointwise convergence}, Equation~\eqref{eq: g def}, Lemma~\ref{lem: diffusion maps lemma}, and after some technical manipulation, it follows that
\begin{equation}
    \rho_{\epsilon}(y) [g(y)]^{\alpha-1/2} = c q^{-\alpha}(y) + \epsilon A(y) +\mathcal{O}(\epsilon^{1+\beta}), \label{eq: rho g}
\end{equation}
where $c>0$ is a global constant and $A\in \mathcal{C}^3(\mathcal{M})$ is independent of $\epsilon$. Consequently, by~\eqref{eq: rho g} and Lemma~\ref{lem: diffusion maps lemma} we have
\begin{align}
    &(\pi \epsilon)^{-d/2} \int_{\mathcal{M}} \mathcal{K}_{\epsilon}(x_i,y) \rho_{\epsilon}(y) [g(y)]^{(\alpha-1/2)} f(y) q(y) d\mu(y) \nonumber \\
    &= (\pi \epsilon)^{-d/2} \left[ c \int_{\mathcal{M}} \mathcal{K}_{\epsilon}(x_i,y) q^{1-\alpha}(y) f(y) d\mu(y) + \epsilon \int_{\mathcal{M}} \mathcal{K}_{\epsilon}(x_i,y) A(y) q(y) f(y) d\mu(y) \right] \nonumber \\
    &= c q^{1-\alpha}(x_i) f(x_i) + \frac{c \epsilon}{4}\left[ \omega(x_i) f(x_i) - \Delta_\mathcal{M}\{q^{1-\alpha} f\}(x_i) \right] + \epsilon A(x_i) q(x_i) f(x_i) +\mathcal{O}(\epsilon^{1+\beta}) \nonumber \\
    &= J_i f(x_i) - \frac{c \epsilon}{4} \Delta_\mathcal{M}\{q^{1-\alpha} f\}(x_i) + \mathcal{O}(\epsilon^{1+\beta}),
\end{align}
where we defined
\begin{equation}
    J_i = c q^{1-\alpha}(x_i) + \frac{c \epsilon}{4}\omega(x_i) + \epsilon A(x_i) q(x_i).
\end{equation}
Combining the above with~\eqref{eq: numerator bias term} and taking out the term $J_i$ as a common factor, we have 
\begin{equation}
    \sum_{j=1}^n {W}_{i,j} \hat{q}_j^{-(\alpha-1/2)} f(x_j) = 
    J_i \rho_{\epsilon}(x_i) \left(  f(x_i)  - \epsilon \frac{ \Delta_\mathcal{M}\{q^{1-\alpha} f\}(x_i)}{4q^{1-\alpha}(x_i)} + \mathcal{O}(\epsilon^{1+\beta}) \right) \left( 1 + \mathcal{O}_{m,n}^{(\epsilon)} (\widetilde{\mathcal{E}}(m,n)) \right).
\end{equation}
Similarly, by setting $f\equiv 1$ in the above, 
\begin{equation}
    \sum_{j=1}^n {W}_{i,j} \hat{q}_j^{-(\alpha-1/2)} = 
    J_i \rho_{\epsilon}(x_i) \left( 1 - \epsilon \frac{ \Delta_\mathcal{M}\{q^{1-\alpha}\}(x_i)}{4q^{1-\alpha}(x_i)} + \mathcal{O}(\epsilon^{1+\beta}) \right) \left( 1 + \mathcal{O}_{m,n}^{(\epsilon)} (\widetilde{\mathcal{E}}(m,n)) \right).
\end{equation}
Therefore, according to~\eqref{eq: Laplacian expression in proof} and a Taylor expansion in small $\epsilon$,
\begin{align}
    \sum_{j=1}^n L^{(\alpha)}_{i,j} f(x_j) &= \frac{4}{\epsilon} \left[ f(x_i) - \frac{f(x_i)  - \epsilon \frac{ \Delta_\mathcal{M}\{q^{1-\alpha} f\}(x_i)}{4q^{1-\alpha}(x_i)} + \mathcal{O}(\epsilon^{1+\beta})}{1 - \epsilon \frac{ \Delta_\mathcal{M}\{q^{1-\alpha}\}(x_i)}{4q^{1-\alpha}(x_i)} + \mathcal{O}(\epsilon^{1+\beta})} \right] + \mathcal{O}_{m,n}^{(\epsilon)} (\widetilde{\mathcal{E}}(m,n)) \nonumber\\
    &= \frac{ \Delta_\mathcal{M}\{q^{1-\alpha} f\}(x_i)}{q^{1-\alpha}(x_i)} - \frac{ \Delta_\mathcal{M}\{q^{1-\alpha}\}(x_i)}{q^{1-\alpha}(x_i)} f(x_i) + \mathcal{O}(\epsilon^{\beta}) + \mathcal{O}_{m,n}^{(\epsilon)} (\widetilde{\mathcal{E}}(m,n)),
\end{align}
thereby concluding the proof after applying the union bound over $i=1,\ldots,n$.

\end{appendices}

\small
\bibliographystyle{plain}
\bibliography{mybib}

\end{document}